\numberwithin{equation}{section}
\newtheorem{theorem}[equation]{Theorem}
\newtheorem{thm}{Theorem}
\theoremstyle{plain}
\newtheorem{lemma}[equation]{Lemma}
\newtheorem{proposition}[equation]{Proposition}
\newtheorem{definition}[equation]{Definition}
\newtheorem{corollary}[equation]{Corollary}
\newtheorem*{corollary*}{Corollary}
\newtheorem{remark}[equation]{Remark}
\newenvironment{myproof}[2] {\emph{Proof of {#1} {#2}.}}{\hfill$\square$}
\def\GL{\mathrm{GL}}
\def\GSp{\mathrm{GSp}}
\def\GSpin{\mathrm{GSpin}}
\def\Sp{\mathrm{Sp}}
\def\GU{\mathrm{GU}}
\def\Nilp{(\mathrm{Nilp})}
\def\det{\mathrm{det}}
\def\Lie{\mathrm{Lie}}
\def\inv{\mathrm{inv}}
\def\Ta{\mathrm{Ta}}
\def\loc{\mathrm{loc}}
\DeclareMathOperator{\End}{End}
\DeclareMathOperator{\Adm}{Adm}
\def\calF{\mathcal{F}}
\def\calN{\mathcal{N}}
\def\calM{\mathcal{M}}
\def\calO{\mathcal{O}}
\def\gothS{\mathfrak{S}}
\def\AAA{\mathbb{A}}
\def\CC{\mathbb{C}}
\def\DD{\mathbb{D}}
\def\FF{\mathbb{F}}
\def\GG{\mathbb{G}}
\def\PP{\mathbb{P}}
\def\QQ{\mathbb{Q}}
\def\RR{\mathbb{R}}
\def\XX{\mathbb{X}}
\def\ZZ{\mathbb{Z}}
\newcommand{\Spf}[1]{\mathrm{Spf} (#1)}
\newcommand{\Sh}{\gothS h}
\address{\parbox{\linewidth} {Haining Wang,\\ Department of Mathematics,\\ McGill University,\\ 805 Sherbrooke St W,\\ Montreal, QC H3A 0B9, Canada.~ }}
\email{wanghaining1121@outlook.com}
\subjclass[2000]{Primary 11G18, Secondary 20G25}
\date{\today}
\begin{document}

\title[Quaternionic unitary Rapoport-Zink space]{On the Bruhat-Tits stratification of a quaternionic unitary Rapoport-Zink space}

\author{Haining Wang}
\begin{abstract}
In this article we study the special fiber of the Rapoport-Zink space attached to a quaternionic unitary group. The special fiber is described using the so called Bruhat-Tits stratification and is intimately related to the Bruhat-Tits building of a split symplectic group. As an application we describe the supersingular locus of the related Shimura variety.
\end{abstract}
\keywords{\emph{Shimura varieties, Bruhat-Tits building, affine Deligne-Lusztig varieties}}
\maketitle 
\tableofcontents
\section{Introduction}
\subsection{Motivations} In this article we study the basic locus of a specific Shimura variety. Namely the Shimura variety associated to a quaternionic unitary group. This Shimura variety is particular interesting as it is of PEL-type and is intimately related to Siegel and orthogonal type Shimura varieties. This article is inspired by the paper \cite{KR-ASENS94} where the basic locus of this Shimura variety is studied at a prime $p$ where the quaternion algebra is split. Using this description, special cycles on this Shimura variety are defined and their intersection numbers are related to Fourier coefficients of Eisenstein series. In this case the Shimura variety has good reduction and the quaternionic unitary group at $p$ agrees with the symplectic group of degree $4$. Therefore the supersingular locus can be studied essentially in the same way as in the case of a Siegel threefold of hyperspecial level. An explicit description of this supersingular locus is available in \cite{KO-COM87}. However in \cite{KR-ASENS94} the authors are able to relate the description to the Bruhat-Tits building of an inner form of the symplectic group. In this article we treat the case where the quaternion algebra is ramified at $p$ and we describe the supersingular locus with its relation to Bruhat-Tits building. Note in this case the Shimura variety has bad reduction at $p$ and the local model is discussed in this article. In a subsequent work we will define and study special cycles on this Shimura variety. 

In general, to describe the supersingular locus of a PEL-type Shimura variety, one can pass to the associated moduli space of $p$-divisible groups then use the uniformization theorem of Rapoport-Zink \cite{RZ-Aoms}. These moduli spaces of $p$-divisible groups are known as the basic Rapoport-Zink spaces and we are interested in calculating their special fibers. By passing from $p$-divisible groups to their assciated Dieudonn\.{e} modules, one obtain lattices in an isocrystal. By comparing the relative position between these lattices and the lattices representing the faces of the base alcove of an inner form of the underlying group of the PEL-problem, once can partite the Rapoport-Zink space into pieces which are related to classical Deligne-Lusztig varieties. This decomposition is known as the \emph{Bruhat-Tits stratification}. The terminology come from the pioneering work of \cite{Vol-can10} where the case of the Rapoport-Zink space of $\GU(1, n-1)$ at an inert prime is analyzed. The analysis is completed in \cite{VW-invent11}. This program has been extended to cover other PEL-type problems by many authors. We provide a list of works of this type and acknowledge their influences on this article.
\begin{itemize}
\item[-] For $\GU(1, n-1)$, $p$-inert and hyperspecial level, this is the work of \cite{Vol-can10} and \cite{VW-invent11}.  In a recent preprint \cite{Cho}, Sungyoon Cho is able to treat many cases of parahoric level structures.
\item[-] For $\GU(1, n-1)$, $p$-ramified and with level structure related to self-dual lattices, this is the work of \cite{RTW-MZ14}. When the level structure is related to a special parahoric and the Rapoport-Zink space admits the so called exotic good reduction, this is the work of \cite{Wu-thesis16}.
\item[-] For $\GU(2,2)$, $p$-split, this is considered by the author in \cite{Wang-2}. For $\GU(2,2)$, $p$-inert, this is done in \cite{HP14} by transferring the problem to $\GSpin(4,2)$. In the process of preparing of this note, we find the methods in this note can be also used to deal with $\GU(2,2)$, $p$-inert directly and this is documented in  \cite{Wang-2}.
\item[-] For $\GSpin(n-2,2)$ with hyperspecial level, this is the work of \cite{HP17}. In this case, the Shimura variety is not of PEL-type in general except for small ranks case where exceptional isomorphism happens. For example, $\GU(2,2)$ is essentially $\GSpin(4,2)$ and $\GSp(4)$ is essentially $\GSpin(3,2)$. The two Rapoport-Zink space studied in this note are both of non-hypersepcial level and therefore is not covered in \cite{HP17}, however they are all $\GSpin(3,2)$-type Shimura varieties.
\end{itemize}

A general understanding of the Bruhat-Tits stratification is achieved in the powerful work of \cite{GH-Cam15} and the subsequent work of \cite{GHN16}. There the problem is studied in the setting of affine Deligne-Lusztig varieties and a general group theoretic method is employed. Their work not only classifies which types of Shimura varieties admit Bruhat-Tits stratification but also gives an algorithm to compute the Deligne-Lusztig varieties occurring in the strata. The types of Shimura varieties in the above list as well as the two Shimura varieties in this article are covered by their work. A comparison between our results with \cite{GH-Cam15} is contained in the very last section of this article. 

Besides the Bruhat-Tits stratification, there are other methods to study the supersingular locus. We will only mention the works  \cite{Hel-Duk10}, \cite{HTX17} and \cite{CV-Alg18}. The method used in \cite{Hel-Duk10}, \cite{HTX17} is the so called isogeny trick and the isogeny is referred to the isogeny between the universal abelian varieties on different Shimura varieties. Notice that the isogeny trick used in Section $4$ in this note  is of same nature but in a local set up. The work of \cite{CV-Alg18} introduced the so called $J$-stratification for the affine Deligne-Lusztig varieties and is closely related to Bruhat-Tits stratification. In fact it can be shown that the $J$-stratification agrees with the Bruhat-Tits stratification for those classified by \cite{GH-Cam15}, see \cite{Gor18}.

\subsection{Results on Rapoport-Zink spaces}\label{RZ-def} We now introduce some notations and state the local results proved in this article.  Let $p$ be an odd prime and let $\FF$ be an algebraically closed field containing $\FF_{p}$. Let $W_{0}=W(\FF)$ be the Witt ring of $\FF$ and $K_{0}=W(\FF)_{\QQ}$ its fraction field. Let $B$ be a quaternion division algebra over $\QQ$ which ramifies at $p$ and splits at $\infty$. Denote by $*$ a neben involution on $B$, see \cite[A.4]{KR-ASENS94}. We fix a maximal order $\calO_{B}$ that is stable under $*$.  Let $N$ be a height $8$ isocrystal of slope $\frac{1}{2}$ equipped with a map $\iota: B_{p}\rightarrow \End({N})$ and an alternating form $(\cdot,\cdot): N\times N \rightarrow K_{0}$. We are concerned with the following moduli space for $p$-divisible groups with additional structures. 
Let $\Nilp$ be the category of $W_{0}$-schemes on which $p$ is locally nilpotent. We fix a  $p$-divisible group $\XX$ whose associated isocrystal is $N$ and a polarization $\lambda: \XX\rightarrow \XX^{\vee}$ corresponding to $(\cdot,\cdot)$ on $N$. We consider the set valued functor $\mathcal{N}$ that sends $S\in\Nilp$ to the isomorphism classes of the collection $(X, \iota_{X},  \lambda_{X}, \rho_{X})$ where
\begin{itemize} 
\item[-] $X$ is a $p$-divisible group of dimension $4$ and height $8$ over $S$;
\item[-]  $\lambda_{X}: X\rightarrow X^{\vee}$ is a principal polarization;
\item[-] $\iota_{X}: \calO_{B_{p}}\rightarrow \End_{S}(X)$ is an action of $\calO_{B_{p}}$ on $X$ defined over $S$;
\item[-]   $\rho_{X}: X\times_{S} S_{0}\rightarrow \XX\times_{\FF}S_{0}$ is an $\calO_{B_{p}}$-linear quasi-isogeny where $S_{0}$ is the special fiber of $S$ at $p$.
\end{itemize}

We require that $\iota_{X}$ satisfies the Kottwitz condition 
\begin{equation}\label{Kottwitz}
\det(T-\iota(c);\Lie(X))=(T^{2}-\mathrm{Trd}^{0}(c)T+ \mathrm{Nrd}^{0}(c))^{2}
\end{equation}
for $c\in\mathcal{O}_{B_{p}}$, where $\mathrm{Trd}^{0}(c)$ is the reduced trace of $c$ and $\mathrm{Nrd}^{0}(c)$ is the reduced norm of $c$. For $\rho_{X}: X\times_{S} S_{0}\rightarrow \XX\times_{\FF}S_{0}$, we require that 
\begin{equation}
\rho_{X}^{-1}\circ\lambda_{\XX}\circ\rho_{X}=c(\rho)\lambda_{X}
\end{equation}
for a $\QQ_{p}$-multiple $c(\rho)$.  
This moduli problem is representable by a formal scheme $\calN$, locally formally of finite type over $\Spf{W_{0}}$. The formal scheme $\calN$ can be decomposed into open and closed sub formal schemes $\calN=\bigsqcup_{i\in\ZZ}\calN(i)$ where each $\calN(i)$ is isomorphic to $\calN(0)$. Denote by $\calM=\calN_{red}(0)$ the underlying reduced scheme of $\calN(0)$. Then our main result concerns the structure of $\calM$.
\begin{thm}\label{intro-thm1}
The scheme $\calM$ can be decomposed into $\calM=\calM_{\{0\}}\cup\calM_{\{2\}}$. The irreducible components of $\calM_{\{0\}}$ and $\calM_{\{2\}}$ are all isomorphic to the surface defined by the equation $$x_{3}^{p}x_{0}-x^{p}_{0}x_{3}+x^{p}_{2}x_{1}-x^{p}_{1}x_{2}=0$$ where $x_{i}$ are the projective coordinates on $\PP^{3}$. If an irreducible component in $\calM_{\{0\}}$ intersect with an irreducible component of $\calM_{\{2\}}$ non-trivially, then the intersection is isomorphic to $\PP^{1}$. If an irreducible component in $\calM_{\{0\}}$ intersect with an other irreducible component of $\calM_{\{0\}}$ non-trivially, then the intersection is a point which is superspecial.  If an irreducible component in $\calM_{\{2\}}$ intersect with another irreducible component of $\calM_{\{2\}}$ non-trivially, then the intersection is a point which is superspecial.
\end{thm}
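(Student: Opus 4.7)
The plan is to prove the theorem via an explicit Bruhat-Tits stratification, translating the problem successively into the language of Dieudonn\'e lattices, vertex lattices, and Deligne-Lusztig varieties. First, via covariant Dieudonn\'e theory, $\calM(\FF)$ is identified with the set of lattices $M \subset N$ that are: (i) $F$-stable with $pM \subset VM \subset M$ and $\dim_{\FF} M/VM = 4$; (ii) self-dual for the alternating form on $N$ associated to $\lambda$; (iii) stable under $\iota(\calO_{B_p})$; and (iv) satisfy the Kottwitz condition \eqref{Kottwitz}. Writing $\Pi$ for a uniformizer of $\calO_{B_p}$ with $\Pi^2 \in p\calO_{B_p}^{\times}$, the compatibility between $\Pi$, the form $(\cdot,\cdot)$, and $F$ lets me reinterpret $N$ as a $4$-dimensional symplectic space equipped with a $\sigma$-twisted Frobenius of slope $\tfrac{1}{2}$; the inner form of the reductive group controlling the combinatorics is the split symplectic group $\Sp_4$ over $\QQ_p$.

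Next, I would introduce vertex lattices as $\Pi$-stable lattices $\Lambda \subset N$ satisfying $p\Lambda^\vee \subset \Lambda \subset \Lambda^\vee$, classified by the type $t(\Lambda) := \dim_{\FF_p}(\Lambda^\vee/\Lambda) \in \{0,2\}$; the intermediate value $t=1$ is excluded by the Kottwitz condition together with the compatibility of $\Pi$ with $F$. For each such vertex lattice $\Lambda$ define
\[
\calM_\Lambda := \{ M \in \calM(\FF) : \Lambda \subset M \subset \Lambda^\vee \}, \qquad \calM_{\{t\}} := \bigcup_{t(\Lambda)=t} \calM_\Lambda.
\]
The decomposition $\calM = \calM_{\{0\}} \cup \calM_{\{2\}}$ then reduces to showing that every Dieudonn\'e lattice $M$ arising in $\calM$ is sandwiched between a vertex lattice and its dual for one of the two admissible types, which follows from the standard Bruhat-Tits theory for $\Sp_4$ together with a direct dimension count against \eqref{Kottwitz}.

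The heart of the argument is the identification of each $\calM_\Lambda$ with the surface $S: x_3^p x_0 - x_0^p x_3 + x_2^p x_1 - x_1^p x_2 = 0$. Reducing $M$ modulo an appropriate sublattice of $\Lambda^\vee$ attached to $\Lambda$, one obtains a subspace $\overline{M}$ inside an $\FF$-vector space $V$ equipped with a symplectic form $\omega$ and a $\sigma$-linear Frobenius $F$; the lattice conditions translate into $\overline{M}$ being isotropic with prescribed Frobenius position $F\overline{M} \subset \overline{M}^{\perp}$. After fixing a symplectic basis so that $\omega(x,y) = x_0 y_3 - x_3 y_0 + x_1 y_2 - x_2 y_1$ and parameterizing $\overline{M}$ by a point $[x_0:x_1:x_2:x_3] \in \PP^3 = \PP(V)$, the constraint $\omega(x,x^{(p)}) = 0$ is precisely the equation of $S$. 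This realizes $\calM_\Lambda$ as the classical Coxeter-type Deligne-Lusztig variety for $\Sp_4$, and the main obstacle I anticipate is carrying out this identification at the scheme level -- in particular, verifying that the ramification of $B_p$ and the non-hyperspecial level only renormalize rather than twist the effective symplectic structure on $V$, and that the natural moduli map is an isomorphism of reduced schemes rather than merely a bijection on geometric points.

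Finally, the intersection behavior is dictated by adjacency in the building: $\calM_{\Lambda_1} \cap \calM_{\Lambda_2} \neq \emptyset$ forces $\Lambda_1 + \Lambda_2$ to again be a vertex lattice, and the intersection is the sub-Deligne-Lusztig variety cut out by the extra incidence. When $\Lambda_1$ is of type $\{0\}$ and $\Lambda_2$ of type $\{2\}$, a single extra linear condition on $S$ reduces the cut-out variety to a copy of $\PP^1$. When $\Lambda_1$ and $\Lambda_2$ have the same type, the combinatorics of the chamber complex forces the compatible Dieudonn\'e lattice $M$ to be uniquely determined, producing a single superspecial point, which can then be cross-checked against the classification of superspecial $p$-divisible groups with the prescribed $\calO_{B_p}$-action. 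All these verifications reduce to straightforward linear algebra once the explicit model of each stratum as $S$ is in place.
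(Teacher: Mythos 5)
Your overall strategy---reduce to Dieudonn\'e lattices, introduce $\tau$-stable vertex lattices, identify the lattice strata with the classical Coxeter-type Deligne-Lusztig surface for $\Sp_4$, and read off intersections from the incidence geometry of the affine building of type $\tilde C_2$---is the same route the paper takes. However, there are two concrete problems and one acknowledged-but-unaddressed step.

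First, your definition of the lattice stratum is dimensionally wrong as written. Setting $\calM_\Lambda = \{M : \Lambda \subset M \subset \Lambda^\vee\}$ with $\dim(\Lambda^\vee/\Lambda) \in \{0,2\}$ cannot produce a two-dimensional stratum: for a self-dual $\Lambda$ the condition forces $M = \Lambda$, a single point, and for colength $2$ it yields at most a $\PP^1$. The paper avoids this by first passing from the $\Pi$-stable, self-dual $M \subset N$ to its $0$-component $D = M_0 \subset N_0$ with the renormalized form $(\cdot,\cdot)_0 = (\cdot,\Pi\cdot)$; the effect is that $D$ satisfies $pD^\vee \subset^2 D \subset^2 D^\vee$, which is emphatically \emph{not} self-dual. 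A type-$0$ vertex lattice $L_0$ then enters via the one-sided inclusion $D \subset L_{0,W_0}$, which, combined with $pD^\vee \subset D$, forces $pL_0 \subset^1 pD^\vee \subset^2 D \subset^1 L_0$. It is this $1$--$2$--$1$ flag in $L_0/pL_0 \cong \FF^4$, not a sandwich between a vertex lattice and its dual, that produces the surface $x_3^p x_0 - x_0^p x_3 + x_2^p x_1 - x_1^p x_2 = 0$. Your later reduction modulo ``an appropriate sublattice of $\Lambda^\vee$'' landing in $\PP^3$ suggests you had something like this in mind, but the definition you actually give does not deliver it.

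Second, and more seriously, you treat the existence of the vertex lattice as a formality: ``every Dieudonn\'e lattice $M$ arising in $\calM$ is sandwiched between a vertex lattice and its dual... which follows from the standard Bruhat-Tits theory for $\Sp_4$ together with a direct dimension count against \eqref{Kottwitz}.'' This is precisely the technical heart of the theorem and it does not follow from Bruhat-Tits generalities. One has to show that for each $D$ a $\tau$-stable lattice $L(D)$ exists with the correct index position (equivalently, that either $D + \tau(D)$ or $D \cap \tau(D)$ is already $\tau$-stable), and this is genuinely delicate: $\tau = \Pi V^{-1}$ is only $\sigma^2$-linear, so nothing a priori controls the orbit $\{\tau^j(D)\}$. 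The paper's Proposition~\ref{crucial_lemma} proves this by an inductive argument on the chain $L_j(D) = D + \tau(D) + \cdots + \tau^j(D)$, using the Pappas/wedge condition $\dim_\FF (D+\tau(D))/D \le 1$ at each step, together with a delicate index chase showing that if neither $D+\tau(D)$ nor $D\cap\tau(D)$ were $\tau$-stable one would contradict the self-duality $L_d(D) \subset L_d(D)^\vee$. Without an argument of this kind, the very decomposition $\calM = \calM_{\{0\}} \cup \calM_{\{2\}}$ is unproved and the rest of your plan has no foothold.

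Finally, you correctly flag that the isomorphism $\calM_{L_i} \cong Y_{L_i}$ must be established at the level of reduced schemes, not merely $\FF$-points, but offer no mechanism. The paper does this via the isogeny trick: each vertex lattice $L_i$ produces auxiliary $p$-divisible groups $\XX_{L_i^\pm}$ and a pair of universal isogenies pinning $X$ between them, which simultaneously shows $\calM_{L_i}$ is a closed projective subscheme (by Rapoport--Zink's boundedness criteria) and, using windows for displays in place of Dieudonn\'e theory, extends the point count to all perfect fields; one then closes with Zariski's main theorem against the normal projective target $Y_{L_i}$. Filling this in is not optional, since the Rapoport--Zink space has bad reduction here and there is no a priori reason the set-theoretic bijection promotes to an isomorphism.
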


The decomposition $\calM=\calM_{\{0\}}\cup\calM_{\{2\}}$ in Theorem \ref{intro-thm1} can be made finer and we have the \emph{Bruhat-Tits stratification} of $\calM$.

\begin{thm}\label{intro-thm2}
We have the Bruhat-Tits stratification $$\calM=\calM^{\circ}_{\{0\}}\sqcup\calM^{\circ}_{\{2\}}\sqcup\calM^{\circ}_{\{0,2\}}\sqcup \calM_{\{1\}}.$$ The stratum $\calM^{\circ}_{\{0\}}$ is open an dense in $\calM_{\{0\}}$  and the complement is precisely $\calM^{\circ}_{\{0,2\}}\sqcup \calM_{\{1\}}$. Similarly the stratum $\calM^{\circ}_{\{2\}}$ is open and dense in $\calM_{\{2\}}$  and the complement is precisely $$\calM^{\circ}_{\{0,2\}}\sqcup \calM_{\{1\}}.$$ The irreducible components of $\calM^{\circ}_{\{0\}}$, $\calM^{\circ}_{\{2\}}$ and $\calM^{\circ}_{\{0,2\}}$ can be identified with classical Deligne-Lusztig varieties.
\end{thm}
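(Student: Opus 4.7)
The plan is to refine the decomposition of Theorem \ref{intro-thm1} by introducing a combinatorial labeling of strata via vertex and edge lattices in the Bruhat-Tits building of $\Sp_{4}(\QQ_{p})$, following the strategy of Vollaard-Wedhorn \cite{VW-invent11} and Rapoport-Terstiege-Wilson \cite{RTW-MZ14}. After translating the moduli problem into its Dieudonn\'e-theoretic form, each $\FF$-point of $\calM$ corresponds to a lattice $M \subset N$ that is $\calO_{B_{p}}$-stable, self-dual under the alternating form, satisfies $pM \subset FM \subset M$, and obeys the Kottwitz condition \eqref{Kottwitz}. The position of such $M$ can then be compared against the vertices of the base alcove of the building to produce the desired labeling.

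For each non-empty $t \subset \{0,1,2\}$ corresponding to a face of the base alcove, I would define $\calM_{t}$ as the closed locus of lattices $M$ compatible with a normalized vertex or edge lattice of type $t$, so that $\calM_{\{0\}}$ and $\calM_{\{2\}}$ recover the strata from Theorem \ref{intro-thm1}, while $\calM_{\{0,2\}} := \calM_{\{0\}} \cap \calM_{\{2\}}$ and $\calM_{\{1\}} \subset \calM_{\{0,2\}}$ correspond to the edge and to the interior self-dual vertex respectively. The open stratum $\calM^{\circ}_{t}$ is defined as the complement in $\calM_{t}$ of all strata indexed by larger faces; this yields the claimed disjoint decomposition. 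Openness and density of $\calM^{\circ}_{\{0\}}$ in $\calM_{\{0\}}$ (and similarly for $\{2\}$) then reduce to showing that $\calM^{\circ}_{\{0,2\}} \sqcup \calM_{\{1\}}$ is a proper closed subset of the irreducible surface described in Theorem \ref{intro-thm1}, which follows from a dimension count since $\calM^{\circ}_{\{0,2\}}$ has dimension one and $\calM_{\{1\}}$ dimension zero.

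The main step is identifying each open stratum with a classical Deligne-Lusztig variety. For a vertex lattice $\Lambda$ of type $t$, the stabilizer in the group $J_{b}(\QQ_{p})$ of self quasi-isogenies of the framing object is a parahoric subgroup whose reductive quotient $\bar{J}_{t}$ over $\FF_{p}$ acts on the $\FF_{p}$-space $V_{\Lambda} = \Lambda/\Lambda^{\vee}$ equipped with an induced form. Via Lang's theorem, the $\FF$-points of $\calM^{\circ}_{t}$ can be parametrized by elements $g \in \bar{J}_{t}(\FF)$ with $g^{-1}F(g)$ lying in a prescribed Bruhat cell, realizing the stratum as a Deligne-Lusztig variety. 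I expect $\calM^{\circ}_{\{0\}}$ and $\calM^{\circ}_{\{2\}}$ to correspond to open Coxeter-type Deligne-Lusztig varieties for a form of $\Sp_{4}/\FF_{p}$ whose projective closures recover the surface $x_{3}^{p}x_{0}-x_{0}^{p}x_{3}+x_{2}^{p}x_{1}-x_{1}^{p}x_{2}=0$, while $\calM^{\circ}_{\{0,2\}}$ corresponds to a Deligne-Lusztig curve for $\GL_{2}$ and $\calM_{\{1\}}$ to a finite set of superspecial points.

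The main obstacle lies precisely in this last identification: the lattice-theoretic data at a vertex must be matched with the group-theoretic data defining a Deligne-Lusztig variety, which requires a careful analysis of the Frobenius action on $\Lambda/\Lambda^{\vee}$ together with the $\calO_{B_{p}}$-equivariance imposed by the quaternionic structure. Because $B$ is ramified at $p$, the relevant parahoric is non-hyperspecial and its reductive quotient is not simply the split $\Sp_{4}$; determining the correct inner form and verifying that the Bruhat cell is of genuine Coxeter type is the crucial place where the argument departs from the unramified treatment of \cite{KR-ASENS94}.
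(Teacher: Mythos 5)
Your proposal captures the right overall architecture — labeling strata by faces of the base alcove for $\Sp_4$, defining open strata as complements, and identifying them with Coxeter-type Deligne--Lusztig varieties for $\Sp_4/\FF_p$ and $\GL_2/\FF_p$ — and you are correct that $\calM_{\{1\}}\subset\calM_{\{0\}}\cap\calM_{\{2\}}$ and that $\calM^{\circ}_{\{0,2\}}$ gives the intersections away from superspecial points. But two load-bearing steps are missing.

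First, you never address why the lattice strata cover $\calM(\FF)$ at all, i.e.\ why every $D\in\calM(\FF)$ is comparable with some $\tau$-stable lattice. This is the content of Proposition~\ref{crucial_lemma}: one must produce a $\tau$-stable $L(D)$ fitting into either $pL(D)^\vee\subset pD^\vee\subset D\subset L(D)\subset L(D)^\vee\subset D^\vee$ or the dual chain. The paper's argument is genuinely delicate: it exploits the Pappas condition $\dim_\FF(D+\tau(D))/D\le 1$ and an induction on $L_j(D)=D+\tau(D)+\cdots+\tau^j(D)$, showing that the hypothetical failure of $\tau$-stability of $D\cap\tau(D)$ forces $L_d(D)=D+\tau(D)$ and hence a contradiction. ``Comparing $M$ against vertices of the base alcove'' does not substitute for this; without it, the claimed disjoint decomposition of $\calM$ is unproved.

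Second, the Lang-theorem parametrization produces a bijection on $\FF$-points, whereas the theorem asserts a stratification of the scheme $\calM$ into locally closed subschemes isomorphic to Deligne--Lusztig varieties. The paper bridges this gap with the isogeny trick of Section~\ref{Isogeny-trick}: attaching to each vertex lattice $L_i$ a pair of $p$-divisible groups $\XX_{L_i^\pm}$ and cutting out $\calM_{L_i}$ as the locus where the induced quasi-isogenies are genuine isogenies, which yields closedness and projectivity via \cite[Prop.~2.9, Cor.~2.29]{RZ-Aoms}; then windows/displays extend the $\FF$-point bijection to all $k$-points, and Zariski's main theorem promotes the bijective proper map $f_i:\calM_{L_i}\to Y_{L_i}$ to an isomorphism since $Y_{L_i}$ is normal. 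Your sketch gives no mechanism to put a scheme structure on the strata, so the dimension-count argument for density is circular: you use that $\calM^{\circ}_{\{0,2\}}$ has dimension one before establishing any scheme structure on it. Finally, a small slip: for a type-$0$ vertex $L_0$ the quotient $L_0/L_0^\vee$ is zero; the relevant $4$-dimensional symplectic $\FF$-space is $L_{0,W_0}/pL_{0,W_0}^\vee$, and it is the isotropic-flag description inside this space (Proposition~\ref{DL0}) — not a bare Lang quotient — that the paper uses to match strata to $X_B(w_1)$, $X_B(w_2)$, $X_{P_{\{1\}}}(1)$.
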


We explain the connection of this stratification with Bruhat-Tits building. The quaternionic unitary Rapoport-Zink space is acted on by the split symplectic group of $4$-variables. If we label the vertices of a base alcove of the Bruhat-Tits building of $\Sp(4)$ by $\{0,1,2\}$, then the strata $\calM^{\circ}_{\{0\}}$, $\calM^{\circ}_{\{2\}}$, $\calM_{\{1\}}$ and $\calM^{\circ}_{\{0,2\}}$ correspond to precisely the vertices $0, 2, 1$ and the face $\{0,2\}$. The intersection behaviour is also determined by the incidence relation among faces of the Bruhat-Tits building. We refer the reader to Section \ref{main-result-section} for the details.

The quaternionic unitary Rapoport-Zink space in this article is closely related to the Rapoport-Zink space for $\GSp(4)$. Their relation is formally similar to the relation between the Lubin-Tate spaces and the Drinfeld upper-half spaces. But from a group theoretic point view or from the point view of local models, the quaternionic unitary Rapoport-Zink space is closer to the Rapoport-Zink space for $\GSp(4)$ with the so called paramodular level structure instead of with the hyperspecial level. Therefore we also include the calculation for the Bruhat-Tits stratification of the Rapoport-Zink space for $\GSp(4)$ with the paramodular level. However we should point out the description of the supersingular locus for the corresponding Shimura variety in the paramodular case is already known, we refer the reader to \cite{Yu-doc06} and \cite{Yu-Proc11} for the statement and proof of this result. The calculations in \cite{Yu-doc06} and \cite{Yu-Proc11} do not involve Rapoport-Zink space or Bruhat-Tits building and are indirect in the sense that they involve Shimura varieties with different level structures. Our calculation is direct and it is interesting to compare the two cases.

\subsection{Applications to the supersingular locus} We consider the following moduli problem $\gothS h_{U^{p}}$ in \cite{KR-ASENS94}. To a scheme $S$ over $\ZZ_{(p)}$, we classify the set of isomorphism classes of the collection $\{(A, \iota , \lambda, \eta )\}$ where:
\begin{itemize}
\item[-]  $A$ is an abelian scheme of relative dimension $4$ over $S$;
\item[-]  $\lambda: A\rightarrow A^{\vee}$ is a principal polarization which is $\calO_{B}$-linear;
\item[-]  $\iota: \calO_{B} \rightarrow \End_{S}(A)$ is a morphism such that $\lambda\circ i(a^{*})= i(a)^{\vee}\circ\lambda$ and satisfies the Kottwitz condition 
\begin{equation}\label{Ko-cond}
\det(T-\iota(a), \Lie(A))=(T^{2}-\mathrm{Trd}^{0}(a)T+\mathrm{Nrd}^{0}(a))^{2}
\end{equation} 
for all $a\in \calO_{B}$ and $\mathrm{Nrd}^{0}$ is the reduced norm of $B$ and $\mathrm{Trd}^{0}$ is the reduced trace of $B$;
\item[-]  $\eta$ is a suitable prime to $p$ level structure.
\end{itemize}

This moduli problem is representable by a quasi-projective scheme. We are interested in the closed subscheme  $Sh^{ss}_{U^{p}}$ in $\Sh_{U^{p}}\otimes \FF$ known as the supersingular locus which we view as a reduced scheme. The uniformization theorem of Rapoport-Zink \cite[Theorem 6.1]{RZ-Aoms} transfers the problem of describing the supersingular locus to that of describing the Rapoport-Zink space. More precisely there is an isomorphism of $\FF$ schemes 
$$Sh^{ss}_{U^{p}}\cong I(\QQ)\backslash \calN_{red}\times  G(\AAA^{p}_{f})/U^{p}.$$ 
where $I$ is an inner form of $G=\GU_{B}(2)$.  

\begin{thm}
The scheme $Sh^{ss}_{U^{p}}$ is pure of dimension $2$. For $U^{p}$ sufficiently small, the irreducible components are isomorphic to the surface $x_{3}^{p}x_{0}-x^{p}_{0}x_{3}+x^{p}_{2}x_{1}-x^{p}_{1}x_{2}=0$. The intersection of two irreducible components if nonempty is either isomorphic to a $\PP^{1}$ or is a  point.
\end{thm}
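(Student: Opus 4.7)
The plan is to deduce the theorem from the Rapoport--Zink uniformization isomorphism
$$Sh^{ss}_{U^{p}}\cong I(\QQ)\backslash \calN_{red}\times G(\AAA^{p}_{f})/U^{p}$$
together with the structural description of $\calM=\calN_{red}(0)$ provided by Theorem \ref{intro-thm1}. Using the decomposition $\calN=\bigsqcup_{i\in\ZZ}\calN(i)$ with each $\calN(i)\cong \calN(0)$, the underlying reduced scheme $\calN_{red}$ is a countable disjoint union of copies of $\calM$. First I would rewrite the uniformization by choosing a fundamental domain in $G(\AAA^p_f)\times\ZZ$ for the action of $I(\QQ)$: by the standard finiteness of class numbers for $I$, the double coset space $I(\QQ)\backslash G(\AAA^p_f)\times\ZZ/U^p$ is finite, so $Sh^{ss}_{U^p}$ is exhibited as a finite disjoint union of quotients of the form $\Gamma_j\backslash \calM$, where each $\Gamma_j$ is an arithmetic subgroup of $I(\QQ)$.

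Second, I would invoke the standard argument (as in \cite{RZ-Aoms}) that for $U^p$ sufficiently small the groups $\Gamma_j$ are torsion-free and act freely on $\calM$. This uses the discreteness of $I(\QQ)$ inside $I(\QQ_p)\times G(\AAA^p_f)$ together with the fact that any element of $I(\QQ)$ fixing a point of $\calM$ and lying in $U^p$ at finite away-from-$p$ places must be $1$ once $U^p$ is small enough to exclude the finitely many relevant torsion elements. With freeness in hand, the quotient map $\calM\to \Gamma_j\backslash \calM$ restricts to an isomorphism on any $\Gamma_j$-fundamental family of irreducible components.

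Third, combining freeness with Theorem \ref{intro-thm1}, the irreducible components of $Sh^{ss}_{U^p}$ are in bijection with $\Gamma_j$-orbits on irreducible components of $\calM$ (as $j$ ranges over the finite index set), and each such component is isomorphic to its preimage in $\calM$, hence to the projective surface defined by $x_3^p x_0-x_0^p x_3+x_2^p x_1-x_1^p x_2=0$ in $\PP^3$. Purity of dimension $2$ follows immediately, since this surface is $2$-dimensional and irreducibility is preserved. The statement about intersections also transfers: since $\calM\to \Gamma_j\backslash \calM$ is a local isomorphism, the intersection of two components downstairs equals the intersection of any pair of lifts upstairs, and by Theorem \ref{intro-thm1} such an intersection is either a $\PP^1$ (when one component lies in $\calM_{\{0\}}$ and the other in $\calM_{\{2\}}$) or a single superspecial point (when both lie in the same stratum).

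The main obstacle is the freeness step: one must show that shrinking $U^p$ eliminates all non-trivial stabilizers of points and components of $\calM$ in $I(\QQ)$. The key inputs are that the stabilizer in $I(\QQ)$ of a point of $\calM\times G(\AAA^p_f)/U^p$ is a discrete subgroup of the compact-modulo-center group $I(\RR)$ (since $B$ splits at $\infty$ and $I$ is the associated inner form), hence finite, and that any finite subgroup of $I(\QQ)$ can be excluded by a congruence condition away from $p$. Once this is carried out, all other assertions are direct consequences of Theorem \ref{intro-thm1}.
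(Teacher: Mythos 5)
Your proposal is correct and takes essentially the same route as the paper: the paper's proof is a single sentence invoking the uniformization Theorem~\ref{uniformization} together with the structure Theorem~\ref{main-result} for $\calM=\calN_{\mathrm{red}}(0)$, and your write-up simply fills in the implicit steps (finiteness of $I(\QQ)\backslash G(\AAA^p_f)\times\ZZ/U^p$, freeness of the arithmetic quotient for $U^p$ small using that $I(\RR)$ is compact modulo center, and transfer of components and intersections along the resulting local isomorphism).
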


We end this introduction by mentioning two applications of the results that are obtained in this article.
\begin{itemize}
\item[-] In the preprint \cite{Wang19a}, we use  the results in this article to describe the quaternionic unitary Rapoport-Zink space with arbitary parahoric level structure. 
\item[-] We will also apply the results in this article to the questions of level lowering for Siegel modular forms improving the results in \cite{VH19}, see the forthcoming work \cite{Wang19b}. 
\end{itemize}

\subsection{Acknowledgement} The author would like to thank Henri Darmon for supporting his postdoctoral studies. He would like to thank Liang Xiao for many helpful conversations regarding to Shimura varieties and beyond. He is grateful to Ulrich G\"{o}rtz for all the help and his comments on this article. He is inspired by reading many works of Chia-Fu Yu on Siegel modular varieties. He also would like to thank Ben Howard, Eyal Goren, Yichao Tian, Xu Shen  and Benedict Gross for valuable discussions related to this article.  He would like to thank the referees for all the corrections.

While this article is being reviewed, Yasuhiro Oki obtained similar results about the supersingular locus of the quaternionic unitary Shimura variety independently. His method is completely different. He exploited the exceptional isomorphisms between the group $\GU_{B}(2)$ and the non-split $\GSpin(3,2)$. Then he embedded the non-split $\GSpin(3,2)$ in the split $\GSpin(4,2)$. This allows him to use the results of Howard-Pappas \cite{HP14} mentioned before. We would like to thank Yoichi Mieda for sending Oki's work to us. 

\subsection{Notations} Let $p$ be an odd prime and let $\FF$ be an algebraically closed field containing $\FF_{p}$. Let $W_{0}=W(\FF)$ be the Witt ring of $\FF$ and $K_{0}=W(\FF)_{\QQ}$ be its fraction field. Denote by $\psi_{0}$ and $\psi_{1}$ the two embeddings of $\FF_{p^{2}}$ in $\FF$.  Let $\sigma$ be the Frobenius on $\FF$. Let $M_{1}\subset M_{2}$ be two $W(\FF)$-modules we wrire $M_{1}\subset^{d} M_{2}$ if the colength of the inclusion is $d$. If $R$ is ring and $L$ is an $R$-module and $R^{\prime}$ is an $R$-algebra, we define $L_{R^{\prime}}=L\otimes_{R} R^{\prime}$. 
Let $B$ be a quaternion division algebra over $\QQ$ which ramifies at $p$ and splits at $\infty$.

\section{Quaternionic unitary Rapoport-Zink space}
\subsection{Isocrystal with quaternionic multiplication}\label{quaternionic-module} Let $N$ be an isocrystal of height $8$ over $\FF$ which is isotypic of slope $\frac{1}{2}$. Let $\iota: B_{p}\rightarrow \End({N})$ be an action of $B_{p}$ on $N$. We can write $B_{p}$ as $\QQ_{p^{2}}+\QQ_{p^{2}}\Pi$ with $\Pi^{2}=p$. Suppose $N$ is equipped with an alternating form $(\cdot,\cdot): N\times N \rightarrow K_{0}$ with the property that $(F x, y)= (x, V y)^{\sigma}$ and $(bx, y)= (x, b^{*}y)$ where $b^{*}$ is the image of $b$ under a neben involution of the quaternion algebra. More precisely, the involution can be given by 
\begin{equation*}
\begin{split}
&x^{*}=\sigma(x), x\in \QQ_{p^{2}}\\
&\Pi^{*}=\Pi.\\
\end{split}
\end{equation*}
The action of $\mathcal{O}_{B_{p}}$ decomposes $N$ into $N=N_{0}\oplus N_{1}$. Restricting the symplectic form $(x, y)_{0}:= ( x, \Pi y)$ to $N_{0}$ gives an identification between the group $\GU_{B_{p}}(N)$ and $\GSp(N_{0})$
over $W_{0}$ cf. \cite[1.42]{RZ-Aoms}.

We will use covariant Dieudonn\'{e} theory throughout this article. A Dieudonn\'{e} lattice $M$ is a lattice in $N$  with the property that $pM \subset FM\subset M$.  A Dieudonn\'{e} lattice is \emph{superspecial} if $F^{2}M=pM$ and in this case $F=V$. We are concerned with Diedonne\'{e} lattice with an additional endomorphism $\iota: \mathcal{O}_{B_{p}}\rightarrow \End(M)$. Here we can represent $\mathcal{O}_{B_{p}}$ by $\ZZ_{p^{2}}+\ZZ_{p^{2}}\Pi$ and hence we can decompose M as $M_{0}\oplus M_{1}$ with an additional operator $\Pi$ that swaps the two components. The alternating form $(\cdot,\cdot)$, restricted to $M$, induces a pairing $(\cdot,\cdot): M_{0}\times M_{1}\rightarrow W_{0}$.

\subsection{Rapoport-Zink space for quaternionic unitary group} 
Recall the set valued functor $\mathcal{N}$ defined in Section \ref{RZ-def}. Let $S\in \Nilp$. The $S$-valued points $\mathcal{N}(S)$ of $\mathcal{N}$ are given by the set of isomorphism classes of the quadruples $(X, \iota_{X}, \lambda_{X}, \rho_{X})$ where

\begin{itemize} 
\item[-] $X$ is a $p$-divisible group of dimension $4$ and height $8$;
\item[-] $\lambda_{X}: X\rightarrow X^{\vee}$ is a principal polarization; 
\item[-] $\iota_{X}: \calO_{B_{p}}\rightarrow \End_{S}(X)$ is an action of $\calO_{B_{p}}$ satisfying the Kottwitz condition \eqref{Kottwitz};
\item[-] $\rho_{X}: X\times_{S} S_{0}\rightarrow \XX\times_{\FF}S_{0}$ is an $\calO_{B_{p}}$-linear quasi-isogeny .
\end{itemize}

\begin{lemma}\label{singularity}
The functor $\calN$ is representable by a separated formal scheme locally formally of finite type over $\Spf{W_{0}}$. Moreover $\calN$ is flat and formally smooth away from the superspecial point.
\end{lemma}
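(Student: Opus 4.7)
The plan has two parts matching the two clauses of the lemma.

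First, representability as a separated formal scheme locally formally of finite type over $\Spf{W_{0}}$ is an immediate application of the general representability theorem of Rapoport--Zink (Theorem 3.25 of \cite{RZ-Aoms}). The data $(X,\iota_{X},\lambda_{X},\rho_{X})$ is of standard PEL type with $\calO_{B_p}$-action and Kottwitz condition \eqref{Kottwitz}; the framing quasi-isogeny $\rho_X$ together with the principal polarization $\lambda_X$ rigidifies the moduli problem, and separatedness follows from the presence of $\lambda_X$ exactly as in the Siegel case.

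For flatness and formal smoothness, the strategy is to pass to the local model. Using Grothendieck--Messing theory, Rapoport--Zink produce a local model diagram $\calN \xleftarrow{\pi} \widetilde{\calN} \xrightarrow{q} \calM^{\mathrm{loc}}$ with both $\pi$ and $q$ formally smooth, where $\calM^{\mathrm{loc}}$ parametrizes, on a $W_0$-scheme $S$, direct summands $\calF \subset M \otimes_{W_{0}} \calO_{S}$ of rank $4$ that are $\calO_{B_p}$-stable, totally isotropic for the induced pairing, and satisfy the Kottwitz determinant condition \eqref{Kottwitz}. Since formal smoothness is local and transfers flatness in both directions through the diagram, it suffices to prove that $\calM^{\mathrm{loc}}$ is flat over $W_0$ and smooth away from a single ``worst'' point.

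To analyze $\calM^{\mathrm{loc}}$, I would use the decomposition $M=M_{0}\oplus M_{1}$ from the $\ZZ_{p^{2}}$-action of Section \ref{quaternionic-module}, noting that $\Pi$ swaps the two summands and that the form $(\cdot,\cdot)$ induces a perfect pairing $M_{0}\times M_{1}\to W_{0}$. The filtration splits as $\calF=\calF_{0}\oplus\calF_{1}$ of ranks $(2,2)$, and the $\Pi$-compatibility $\Pi(\calF_{0})\subset\calF_{1}$, $\Pi(\calF_{1})\subset\calF_{0}$, combined with self-duality, determines $\calF_1$ from $\calF_0$. This recovers essentially the paramodular local model for $\GSp(4)$ mentioned in the introduction. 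On each affine Grassmannian chart near the worst point, the Kottwitz condition translates into a single explicit equation in four variables over $W_0$ whose reduction mod $p$ is reduced and of the expected dimension $3$; flatness follows from this explicit presentation, and a direct Jacobian computation shows that the only non-smooth point of $\calM^{\mathrm{loc}}$ is the origin of this chart.

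Finally, via the local model diagram, a point of $\calN(\FF)$ lies in the singular locus exactly when its Hodge filtration maps to the worst point, which on Dieudonn\'{e} modules translates to $FM=\Pi M$ and hence $F^{2}M=pM$; this is precisely the superspeciality condition of Section \ref{quaternionic-module}. The main obstacle I anticipate is checking flatness of the local model rigorously: parahoric local models can harbor embedded components in the special fiber that must be excluded either by spin-type additional conditions or by a direct computation, and one must either exhibit the defining equation and verify reducedness of the special fiber explicitly, or invoke the analogous known result for the paramodular $\GSp(4)$ local model to transport it to the present setting.
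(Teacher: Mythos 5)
Your proposal takes essentially the same route as the paper: representability from \cite[Theorem 3.25]{RZ-Aoms}, reduction via the local model diagram and the $\ZZ_{p^{2}}$-decomposition $M=M_{0}\oplus M_{1}$ to the paramodular $\GSp(4)$ local model ${\bf{M}}^{\loc}_{\{1\}}$, invoking the flatness and singularity analysis of \cite{Yu-Proc11} (which is the route the paper actually takes, rather than the explicit chart computation you also float), and translating the singular condition $p\Lambda^{\vee}_{0}=\calF_{0}$ through the local model diagram to a superspeciality condition on the Dieudonn\'e module. The one cosmetic discrepancy is that, in the paper's covariant convention, the translated condition is $VM_{1}=\Pi M_{1}$ rather than $FM=\Pi M$, but either form gives $F^{2}M=pM$ and hence superspeciality.
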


\begin{proof}
The representability follows from \cite{RZ-Aoms} and the second assertion on flatness and the singularity relies on the analysis of the local model below. We will see after unramified base change it is equivalent to the local model for the split symplectic group of dimension $4$ with paramodular level.  Then the result we need here is proved in \cite{Yu-Proc11}. See the section below for a discussion.
\end{proof}

\subsection{Local model}The local model for this PEL-type Rapoport-Zink space is defined as in \cite[Definition 3.27]{RZ-Aoms}. Let ${\bf{M}}^{\loc}$ be the set valued functor on the category $\ZZ_{p}$-schemes defined as follows. Let $S$ be a test scheme then the set ${\bf{M}}^{\loc}(S)$ is the set of $(\Lambda, \calF_{\Lambda})$ up to ismorphism where 
\begin{itemize}
\item[-] $\Lambda$ is a $\calO_{B_{p}}$-module of $\ZZ_{p}$-rank 8;
\item[-] $\calF_{\Lambda}\subset \Lambda\otimes\calO_{S}$ is a locally free $\calO_{B_{p}}\otimes_{\ZZ_{p}} \calO_{S}$ submodule of $\ZZ_{p}$-rank $4$. 
\end{itemize}
The module $\Lambda$ is equipped with a alternating form $(\cdot,\cdot)$ that satisfies $(bx, y)=(x, b^{*}y)$ and we require that $\Lambda\cong \Lambda^{\perp}$ where $\Lambda^{\perp}$ is the integral dual of $\Lambda$ with respect to the form $(\cdot,\cdot)$ and that $\calF_{\Lambda}$ is isotropic. We also require that $\calF_{\Lambda}$ satisfies the Kottwitz condition: $$\det(T-\iota(a); \calF_{\Lambda})=(T^{2}-\mathrm{Trd}^{0}(a)T+\mathrm{Nrd}^{0}(a))^{2}$$ for every $a\in \calO_{B_{p}}$. As we have seen in Section \ref{quaternionic-module},  for each $(\Lambda, \calF_{\Lambda})$ we have a decomposition $\Lambda\otimes \ZZ_{p^{2}}=\Lambda_{0}\oplus\Lambda_{1}\supset \calF_{\Lambda}\otimes \ZZ_{p^{2}}=\calF_{0}\oplus\calF_{1}$. The conditions we put on $(\Lambda, \calF_{\Lambda})$ translates to the conditions that $\Lambda^{\perp}_{0}=\Lambda_{1}$, $\Pi \Lambda_{0}\subset^{2} \Lambda_{1}$, $\Pi\Lambda_{1}\subset^{2} \Lambda_{0}$, $\Pi\calF_{0}\subset \calF_{1}$ and $\Pi\calF_{1}\subset \calF_{0}$. We define a new alternating form on $\Lambda_{0}$ by $(x,y)_{0}=(x, \Pi y)$. With respect to this new form, we see that the pair $(\Lambda, \calF_{\Lambda})$ is equivalent to the pair $(\Lambda_{0}, \calF_{0})$ that satisfies the condition $p\Lambda^{\vee}_{0}\subset \Lambda_{0}\subset \Lambda^{\vee}_{0}$, $\calF_{0}$ is isotropic with respect to $(\cdot, \cdot)_{0}$.  For example the first condition can be seen from the following computation,
\begin{align*}\label{tau-dual}
 \Pi \Lambda_{1}= \Pi \Lambda_{0}^{\perp} &=\{  x\in \Lambda_{0}\otimes \QQ_{p}: ( \Lambda_{0}, \Pi^{-1} x)\subset \ZZ_{p^{2}} \}\\
     &=\{x: (\Lambda_{0}, \frac{1}{p} \Pi x)\subset \ZZ_{p^{2}} \}\\
     &=p \Lambda^{\vee}_{0}.
\end{align*}
Hence after base change to $W_{0}$, we see that ${\bf{M}}^{\loc}_{W_{0}}$ is equivalent to the functor ${\bf{M}}^{\loc}_{\{1\}, W_{0}}$ where ${\bf{M}}^{\loc}_{\{1\}}$ is the set valued functor on the category of $\ZZ_{p}$-schemes defined as follows. For $S$ a test $\ZZ_{p}$-scheme,  the set ${\bf{M}}^{\loc}_{\{1\}}(S)$ classifies the isomorphism classes of the pairs $(\Lambda, \calF_{\Lambda})$ where 
\begin{itemize}
\item[-] $\Lambda$ is a $\ZZ_{p}$-module of rank $4$;
\item[-] $\calF_{\Lambda}\subset \Lambda\otimes \calO_{S}$ is locally free of rank $2$. 
\end{itemize}
The lattice $\Lambda$ is equipped with an alternating form $(\cdot,\cdot)_{0}$ and we require that $$p\Lambda^{\vee}\subset^{2} \Lambda \subset^{2} \Lambda^{\vee}$$ for the integral dual $\Lambda^{\vee}$ of $\Lambda$ with respect to the form $(\cdot,\cdot)_{0}$ and $\calF_{\Lambda}$ is isotropic with respect to $(\cdot, \cdot)_{0}$. The latter local model ${\bf{M}}^{\loc}_{\{1\}}$ arises when one studies the Siegel threefold with paramodular level structure and we refer the reader to the article  \cite{Yu-Proc11} for the study of this local model. In particular, by \cite[Lemma 2.4]{Yu-Proc11}, the singularity of the special of ${\bf{M}}^{\loc}_{\{1\}}$ is reflected by the condition that $p\Lambda^{\vee}_{0}=\calF_{0}$. Then using the local model diagram \cite[3.29]{RZ-Aoms}, this translates to the condition that $VM_{1}=\Pi M_{1}$ for a Dieudonn\'{e} lattice $M=M_{0}\oplus M_{1}$ under the identification of $M$ to $\Lambda_{W_{0}}$. One then concludes that $M$ is superspeical easily from this. This finishes the proof of Lemma \ref{singularity}.  The following theorem is \cite[ Theorem 1.3]{Yu-Proc11} which characterize the type of singularity of the local model of ${\bf{M}}^{\loc}_{\{1\}, W_{0}}$ and hence of ${\bf{M}}^{\loc}_{W_{0}}$.

\begin{theorem}
The special fiber of the local model ${\bf{M}}^{\loc}_{\{1\}, W_{0}}$ is singular in a discrete set of points and the formal completion of the local ring at a singular point in the special fiber is given by $$\FF[[x_{11}, x_{12}, x_{21}, x_{22}]]/(x_{11}x_{22}-x_{12}x_{21}).$$
\end{theorem}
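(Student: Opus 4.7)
The plan is to reduce to an explicit affine computation on the symplectic Grassmannian. Choose a basis $e_1, e_2, e_3, e_4$ of $\Lambda$ so that the alternating form $(\cdot, \cdot)_0$ has Gram matrix with $(e_1, e_3)_0 = 1$ and $(e_2, e_4)_0 = p$, all other pairings among basis vectors being zero. Then $\Lambda^\vee$ has basis $e_1, e_3, p^{-1}e_2, p^{-1}e_4$, and one checks directly that $p \Lambda^\vee \subset^2 \Lambda \subset^2 \Lambda^\vee$, as required. In the special fiber, the reduction of $(\cdot, \cdot)_0$ has a $2$-dimensional radical $R = \langle \bar e_2, \bar e_4 \rangle$, which coincides with the image of $p\Lambda^\vee \otimes \FF$ in $\Lambda \otimes \FF$.

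Next I would pin down the singular locus of the special fiber. For $\calF_\Lambda \subset \Lambda \otimes k$ a rank-$2$ isotropic subspace over a field $k$ of characteristic $p$, the subspace $\calF_\Lambda \cap R$ has dimension $0$, $1$, or $2$, and a local smoothness analysis (or the argument of \cite[Lemma 2.4]{Yu-Proc11}, as quoted in the text) shows the local model is smooth off the locus $\calF_\Lambda = R$. Since $R$ is uniquely determined, there is a single such closed point $x_0$ in the special fiber (over $\overline{\FF}_p$), giving the asserted discreteness.

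The final step is the deformation computation near $x_0$. On the open affine chart of $\mathrm{Gr}(2,4)$ consisting of subspaces transverse to $V_1 := \langle e_1, e_3 \rangle$, I parametrize $\calF_\Lambda$ as the graph of a homomorphism $R \to V_1$, so that $\calF_\Lambda$ is generated (over the base ring) by
\[ v_1 = e_2 + a e_1 + b e_3, \qquad v_2 = e_4 + c e_1 + d e_3, \]
with $a, b, c, d$ affine coordinates vanishing at $x_0$. A direct expansion of $(v_1, v_2)_0$ using $(e_1, e_3)_0 = 1$ and $(e_2, e_4)_0 = p$ gives
\[ (v_1, v_2)_0 = p + ad - bc, \]
so the isotropy condition cuts out the hypersurface $ad - bc + p = 0$ inside $\Spf{W_0[[a,b,c,d]]}$. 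Setting $p = 0$ and relabeling $(a,b,c,d) = (x_{11}, x_{12}, x_{21}, x_{22})$ produces exactly the claimed formal local ring $\FF[[x_{11}, x_{12}, x_{21}, x_{22}]]/(x_{11}x_{22} - x_{12}x_{21})$, and the Jacobian $(d, -c, -b, a)$ of $ad - bc$ confirms that $x_0$ is indeed the only singular point of the special fiber in this chart.

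The main obstacle is bookkeeping rather than conceptual: one must verify that the single condition ``$\calF_\Lambda$ isotropic and of rank $2$'' really captures the full Rapoport--Zink local model for the paramodular chain, i.e.\ that the compatibility of $\calF_\Lambda$ with the analogous subspace $\calF_{\Lambda^\vee} \subset \Lambda^\vee \otimes \calO_S$ imposes no further condition. This follows from the duality $\Lambda^{\vee\vee} = \Lambda$ and the fact that $\calF_{\Lambda^\vee}$ is forced to equal the preimage of $\calF_\Lambda^\perp / \calF_\Lambda$ under the dual chain map, so the Grassmannian chart above indeed exhausts a neighbourhood of $x_0$ in ${\bf M}^{\loc}_{\{1\}, W_0}$.
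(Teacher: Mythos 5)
Your proof is correct, and it supplies an argument that the paper itself does not give: the paper simply cites \cite[Theorem 1.3]{Yu-Proc11} for this statement without reproducing any details, so your explicit chart computation is a genuine addition rather than a rephrasing of the paper's argument.

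The core of your argument checks out. With the Gram matrix $(e_1,e_3)_0=1$, $(e_2,e_4)_0=p$, one indeed has $\Lambda^\vee = \langle e_1, p^{-1}e_2, e_3, p^{-1}e_4\rangle$, so $p\Lambda^\vee\subset^2\Lambda\subset^2\Lambda^\vee$, and the radical of the reduced form is $R=\langle \bar e_2,\bar e_4\rangle = p\Lambda^\vee\otimes\FF$. In the transversal chart, expanding $(v_1,v_2)_0$ with $v_1=e_2+ae_1+be_3$, $v_2=e_4+ce_1+de_3$ gives exactly $p+ad-bc$, and the sign bookkeeping works out because $(e_3,e_1)_0=-1$. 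Since $W_0[a,b,c,d]/(ad-bc+p)$ is an integral domain, it is automatically flat over $W_0$, so this hypersurface really is the local model near $x_0$ scheme-theoretically and not just its flat closure. Reducing mod $p$ gives $\FF[[a,b,c,d]]/(ad-bc)$ as required, and the Jacobian criterion shows the origin is the unique singular point \emph{in this chart}; that it is the unique singular point of the whole special fiber is handled by your appeal to Yu's Lemma 2.4 ($\calF_\Lambda = p\Lambda^\vee\otimes\FF$), which the present paper also invokes.

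Two minor remarks. First, you could close the loop without quoting Yu's Lemma 2.4 at all: the special fiber is the locus of $2$-planes $U$ in $\bar\Lambda$ with $\dim(U\cap R)\ge 1$, and a short incidence argument (or Jacobian computation on the other standard charts of $\mathrm{Gr}(2,4)$) shows the only non-smooth point is $U=R$, making the proof entirely self-contained. Second, your final paragraph correctly identifies the one conceptual point that needs care, namely that in the full Rapoport--Zink local model for the chain $p\Lambda^\vee\subset\Lambda\subset\Lambda^\vee$, the filtration $\calF_{\Lambda^\vee}$ is determined as $\calF_\Lambda^\perp$ by the duality condition and the chain compatibilities reduce to the single isotropy condition on $\calF_\Lambda$. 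This matches the reduction the paper makes implicitly in defining ${\bf M}^{\loc}_{\{1\}}$ with a single lattice, and it is good that you flagged it rather than assuming it silently.
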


\subsection{Points of the Rapoport-Zink space}By passing to Dieudonn\'{e} modules, the $\FF$-points of the Rapoport-Zink space $\mathcal{N}(\FF)$ can be identified with the following set
\begin{equation}\label{set1}
\begin{split}
&\mathcal{N}(\FF) =\{M\subset N:M^{\perp}=cM, pM\subset VM\subset M, \\
&\Pi M_{0}\subset^{2} M_{1}, \Pi M_{1}\subset^{2} M_{0}, V M_{1}\subset^{2} M_{0}, V M_{0}\subset^{2} M_{1}
\}.\\ 
\end{split}
\end{equation} 
The last two conditions of the definition follows from the Kottwitz condition and similar condition on $\Pi$ follows from the following commutative diagram:

$$
\begin{tikzcd}
  M_{0} \arrow[r,  "V"] \arrow[d,  "\Pi"]
    & M_{1} \arrow[d,  "\Pi"]  \\
  M_{1} \arrow[r,  "V"]
&M_{0} .
\end{tikzcd}
$$ 

We claim the following Pappas condition on $\calN$  is automatic:
\begin{equation}\label{Pappas_condition}
\wedge^{2}(\iota(\Pi); \Lie(X)_{i})=0 \text{  for  } i=0,1.
\end{equation}
Indeed, since we have the composite of $\Pi: M_{0}/VM_{1}\rightarrow M_{1}/VM_{0}$ and $\Pi: M_{1}/VM_{0}\rightarrow M_{0}/VM_{1}$ being multiplication by $p$, at least one of them is not invertible. Note also that we have $(\Pi x, y )=(x, \Pi y)$, the other is also not invertible.

On the level of $\FF$-points, this translates to 
\begin{equation}\label{spin}
\dim_{\FF} VM_{0}+\Pi M_{0}/VM_{0} \leq1 \text{ and }
\dim_{\FF} VM_{1}+\Pi M_{1}/VM_{1} \leq1.
\end{equation}
We can restrict to the component of $\mathcal{N}(0)$ that classifies degree $0$ quasi-isogenies in the definition of the Rapoport-Zink space i.e $c(\rho)\in\ZZ^{\times}_{p}$ and $M=M^{\perp}$ in $\mathcal{N}$ .  In light of what we discussed above that the identification between $\GU_{B_{p}}(2)$ and $\GSp(4)$ over $W_{0}$ is via restricting to the $0$-th component $N_{0}$, we will similarly restrict Dieudonn\'{e} lattices $M$ to their $0$-th componet $M_{0}$. We put $\tau:= \Pi V^{-1}$ and for a lattice $D\subset N_{0}$ we denote by $D^{\vee}$ the integral dual of $D$ in $N_{0}$ with respect to $(\cdot,\cdot)_{0}$. The set theoretic description in \eqref{set1} of the Rapoport-Zink space then can be reformulated in terms of lattices in $N_{0}$ with respect the form $(\cdot,\cdot)_{0}$.  We are interested in the underlying reduced scheme structure of $\calN(0)$ and we denote this reduced scheme by $\calM=\calN_{red}(0)$. The $\FF$-points of $\calM$ are given as follows.
 
 \begin{proposition}\label{set2}
 There is a bijection between $\calM(\FF)$ and the following set
 $$\mathcal{M}(\FF)=\{D\subset N_{0}:pD^{\vee}\subset^{2} D\subset^{2} D^{\vee},  pD^{\vee}\subset^{2} \tau(D)\subset^{2} D^{\vee} \}.$$
 The map is sending $M=M_{0}\oplus M_{1}$ to $D=M_{0}$. Moreover $\dim_{\FF}D+\tau (D)/D\leq1$. 
 \end{proposition}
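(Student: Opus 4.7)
The plan is to translate each lattice condition on $M = M_{0} \oplus M_{1}$ in \eqref{set1} into a corresponding condition on $D := M_{0}$, using the identifications from Section \ref{quaternionic-module}. The key preliminary point is that the condition $(bx, y) = (x, b^{*} y)$ for $b \in \ZZ_{p^{2}}$ forces $(\cdot, \cdot)$ to vanish on $N_{0} \times N_{0}$ and on $N_{1} \times N_{1}$. Consequently, $M^{\perp} = M$ is equivalent to $M_{1} = M_{0}^{*, 1}$, the dual of $M_{0}$ inside $N_{1}$ with respect to $(\cdot, \cdot)$. This shows the map $M \mapsto M_{0}$ is injective and prescribes how to reconstruct $M_{1}$ from $D$.

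Next I would record two dual identities that link the two forms. First, $(\Pi D)^{*, 0} = D^{\vee}$ as sublattices of $N_{0}$, which is immediate from $(y, x)_{0} = (y, \Pi x)$. Second, $\Pi(D^{*, 1}) = p D^{\vee}$: for $y \in N_{1}$ and $x \in D$ the computation $(\Pi y, x)_{0} = (\Pi y, \Pi x) = (y, \Pi^{2} x) = p(y, x)$ shows that $\Pi y \in p D^{\vee}$ if and only if $y \in D^{*, 1}$, and $\Pi$ is bijective between $N_{1}$ and $N_{0}$ since $\Pi^{2} = p$. Using these, the four lattice conditions on $M$ translate mechanically into conditions on $D$:
\begin{itemize}
\item[-] $\Pi M_{0} \subset^{2} M_{1}$ dualizes in $N_{0}$ to $D \subset^{2} D^{\vee}$;
\item[-] $\Pi M_{1} \subset^{2} M_{0}$ becomes $p D^{\vee} \subset^{2} D$;
\item[-] $V M_{1} \subset^{2} M_{0}$ rewrites as $M_{1} \subset^{2} V^{-1}(D)$, and applying $\Pi$ gives $p D^{\vee} = \Pi M_{1} \subset^{2} \Pi V^{-1}(D) = \tau(D)$;
\item[-] $V M_{0} \subset^{2} M_{1}$ reduces to $\tau(D) \subset^{2} D^{\vee}$ by counting lengths inside $D^{\vee}/p D^{\vee}$.
\end{itemize}
Conversely, given $D$ satisfying the two chain conditions, setting $M_{0} := D$ and $M_{1} := D^{*, 1}$ makes each of these equivalences reversible and produces an $M \in \calN(\FF)$ with $M_{0} = D$. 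This yields the desired bijection.

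For the Pappas inequality, the commutative diagram in Section \ref{RZ-def} gives $\Pi V = V \Pi$, hence $\tau^{-1} = V \Pi^{-1} = \Pi^{-1} V$ as semilinear operators on $N_{0}$. The condition \eqref{spin} reads $\dim_{\FF}(V M_{0} + \Pi M_{0})/V M_{0} \leq 1$ inside $M_{1} \subset N_{1}$. Applying the bijection $\Pi^{-1} : N_{1} \to N_{0}$, and using $\Pi^{-1}(V M_{0}) = \tau^{-1}(D)$ and $\Pi^{-1}(\Pi M_{0}) = D$, transforms this into $\dim_{\FF}(\tau^{-1}(D) + D)/\tau^{-1}(D) \leq 1$; then the semilinear bijection $\tau$ yields the asserted $\dim_{\FF}(D + \tau(D))/D \leq 1$.

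The main obstacle is the bookkeeping: keeping straight the two pairings $(\cdot, \cdot)$ and $(\cdot, \cdot)_{0}$, and the various duals taken in $N_{0}$ versus $N_{1}$. Once the two identities $(\Pi D)^{*, 0} = D^{\vee}$ and $\Pi(D^{*, 1}) = p D^{\vee}$ of the second step are in place, each of the translations above is essentially a one-line computation, and the reverse direction uses precisely the same chain of equivalences read backwards.
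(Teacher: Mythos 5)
Your proof is correct and follows essentially the same route as the paper: the whole reduction hinges on the identity $\Pi M_{1}=pM_{0}^{\vee}$, which is exactly your $\Pi(D^{*,1})=pD^{\vee}$, and you then push each condition from $M$ to $D=M_{0}$ using it together with $\tau=\Pi V^{-1}=V^{-1}\Pi$. The paper's proof is terser (it only spells out the chain $\Pi M_{1}\subset\tau(M_{0})\subset\Pi^{-1}M_{1}$ and this one dual computation), whereas you record the auxiliary facts $(\Pi D)^{*,0}=D^{\vee}$, $M_{1}=D^{*,1}$, and translate the four colength conditions one by one; this is a legitimate and more explicit bookkeeping of the same argument. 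One small point worth tightening: in your translation of $VM_{0}\subset^{2}M_{1}$, the direct application of $\Pi^{-1}$ (which equals $\tau^{-1}V^{-1}\cdot V=$ etc.) lands you on $\tau^{-1}(D)\subset^{2}D^{\vee}$ rather than $\tau(D)\subset^{2}D^{\vee}$; to pass between the two you either dualize (using $(\tau D)^{\vee}=\tau(D^{\vee})$) or, as your length-count suggestion does, first establish the unindexed containment $\tau(D)\subset D^{\vee}$ from $VM_{0}\subset M_{1}$ and then read off the colength from $pD^{\vee}\subset^{2}\tau(D)$ inside $D^{\vee}/pD^{\vee}$. Either fix closes the gap, but the phrase \emph{by counting lengths} by itself leaves this one step implicit.
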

 \begin{proof}
 The first condition corresponds to the condition $\Pi M_{0}\subset^{2} M_{1}$. The second condition corresponds to the Kottwitz condition \eqref{Kottwitz}. We only spell out the details for the second condition as similar computation is used throughout this article. Since $pM_{1}\subset FM_{0}\subset M_{1}$, we have $\Pi M_{1}\subset \Pi V^{-1}M_{0}=\tau(M_{0})\subset \Pi^{-1}M_{1}$.  Notice we have:
 \begin{align*}\label{tau-dual}
 \Pi M_{1}= \Pi M_{0}^{\perp} &=\{  x\in N_{0}: ( M_{0}, \Pi^{-1} x)\subset W_{0} \}\\
     &=\{x\in N_{0}: (M_{0}, \frac{1}{p} \Pi x)\subset W_{0} \}\\
     &=p M^{\vee}_{0}.
\end{align*}
The second condition follows. The moreover part is a direct translation of the Pappas condition \eqref{spin}.
\end{proof}

\section{Bruhat-Tits Stratification for quaternionic unitary group} 
\subsection{Deligne-Lusztig varieties}Let $G_{0}$ be a connected reductive group over $\FF_{p}$ and let $G=G_{0,\FF}$ be its base change to $\FF$. Let $T$ be a maximal torus contained in $G$ and $B$ a Borel subgroup containing $T$. We can assume $T$ is defined over $\FF_{p}$. Let $W$ be the finite Weyl group corresponding to $(T, B)$. Then $W$ affords an action by the Frobenius $\sigma$ induced by the Frobenius action on $G$.

Let $\Delta^{*}=\{\alpha_{1}, \cdots, \alpha_{n}\}$ be the simple roots corresponding to $(T, B)$ and let $s_{i}$ be the simple reflection corresponding to the 
root $\alpha_{i}$. For $I\subset \Delta^{*}$, let $W_{I}$ be the subgroup of $W$ generated by $\{s_{i}: i\in I\}$.  Consider $P_{I}$ the corresponding  parabolic subgroup of $G$.  For another set $J\subset \Delta^{*}$, we have a decomposition $$G=\bigcup_{w\in W_{I} \backslash W/ W_{J}}P_{I}wP_{J}$$ and hence a bijection
$$P_{I}\backslash G/ P_{J} \cong W_{I} \backslash W/ W_{J}.$$
We define the relative position map $$\text{inv}: G/P_{I}\times G/P_{J}\rightarrow W_{I} \backslash W/ W_{J} $$ by sending $(g_{1}, g_{2})$ to $g^{-1}_{1}g_{2}\in P_{I}\backslash G/ P_{J} $. 

\begin{definition}
For a given $w\in W_{I}\backslash W/ W_{\sigma(I)}$, the Deligne-Lusztig variety $X_{P_{I}}(w)$ is the locally closed reduced subscheme of $G/P_{I}$ whose $\FF$-points are described by the set 
\begin{equation}
X_{P_{I}}(w)=\{gP_{I}\in G/P_{I}; \mathrm{inv}(g, \sigma(g))=w\}.
\end{equation}
\end{definition}

\subsection{The symplectic group}Let $G_{0}$ be the symplectic group over $\FF_{p}$ defined by a symplectic space $(V, (\cdot,\cdot))$ of dimension 4 over $\FF_{p}$. We choose a basis $(e_{1}\cdots e_{4})$ of $V$ that $(e_{i}, e_{5-j})=\pm \delta_{i,j}$ for $i=1,\cdots, 4$. The simple reflections in $W$ can be understood as follows, as elements in $S_{4}$,
\begin{equation}\label{w_1}
\begin{split}
&s_{1}=(12)(34);\\
&s_{2}=(23);\\
&s_{1}s_{2}=(1243).\\
\end{split}
\end{equation}
We define the elements $w_{1}, w_{2}\in W$  by $w_{1}=s_{1}$ and $w_{2}=s_{1}s_{2}$.

Denote by $G$ the group $G=G_{0}\otimes \FF$. We are concerned with the  Deligne-Lusztig variety $X_{P_{I}}(w)$ for $I=\{1\}$ where $P_{\{1\}}$ is the Klingen parabolic.  Consider the following closed subvarieties of $G/P_{\{1\}}$

\begin{equation}\label{DL_vertex_2}
Y^{(+)}_{V}=\{U\subset V_{\FF}: \dim_{\FF}U=1, U\subset U^{\perp}, \dim_{\FF}U+ \sigma(U)/U\leq 1,  U\subset U^{\perp}\cap \sigma(U^{\perp})\};
\end{equation}
and 
\begin{equation}\label{DL_vertex_0}
Y^{(-)}_{V}=\{U\subset V_{\FF}: \dim_{\FF}U=3, U^{\perp}\subset U, \dim_{\FF}U /U\cap\sigma(U)\leq 1, U^{\perp}\subset U\cap \sigma(U)\}.
\end{equation}
They are obviously isomorphic to each other. We would like to define a stratification on $Y^{(\pm)}_{V}$.

\begin{theorem}\label{DL-stratum}
There is a decomposition of $Y^{(\pm)}_{V}$ into disjoint union of locally closed subvarieties
\begin{equation}
Y^{(\pm)}_{V}=X_{P_{\{1\}}}(1)\sqcup X_{B}(w_{1})\sqcup X_{B}(w_{2})
\end{equation}
where $w_{1}$ and $w_{2}$ are as in \eqref{w_1}. Here $X_{P_{\{1\}}}(1)$ is closed and of dimension $0$, $X_{B}(w_{1})$ is of dimension $1$ and  $X_{B}(w_{2})$ is two dimensional. Moreover $Y^{(\pm)}_{V}$ is the closure of $X_{B}(w_{2})$.
\end{theorem}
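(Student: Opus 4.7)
The plan is to interpret $Y^{(+)}_V$ concretely and stratify by how the Frobenius acts on the isotropic line $U$ and on the Lagrangian $L := U + \sigma(U)$. Observe first that every line in the four-dimensional symplectic space $V_\FF$ is isotropic, so the conditions $U \subset U^\perp$ and $\dim_\FF (U + \sigma(U))/U \leq 1$ in the definition of $Y^{(+)}_V$ are automatically satisfied. The only nontrivial defining condition is $U \subset \sigma(U^\perp)$, equivalently $(u, \sigma(u)) = 0$ for a generator $u$ of $U$, which cuts out an irreducible hypersurface of dimension two in $G/P_{\{1\}}$.

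Next I partition $Y^{(+)}_V$ into three locally closed strata by the criteria (i) $\sigma(U) = U$; (ii) $\sigma(U) \neq U$ but $\sigma(L) = L$; (iii) $\sigma(L) \neq L$. Stratum (i) consists of the $\sigma$-fixed isotropic lines and is by definition the Deligne-Lusztig variety $X_{P_{\{1\}}}(1)$ attached to the trivial double coset, a finite set of dimension zero. For the remaining strata the Lagrangian $L = U + \sigma(U)$ is well-defined, and the assignment $U \mapsto (U, L)$ gives an injection of the stratum into $G/B$ whose image I analyze by computing the relative position of $(U, L)$ and $\sigma(U, L) = (\sigma U, \sigma L)$ in the Bruhat decomposition of $\Sp(4)$. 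In stratum (ii) only the line changes while $L$ is preserved, yielding the simple reflection $w_1 = s_1$. In stratum (iii), using that $\sigma(U) \subset L \cap \sigma(L)$, the transition factors through the intermediate flag $(\sigma U, L)$ as $s_1$ (change of line within $L$) followed by $s_2$ (change of Lagrangian through $\sigma U$), yielding the reduced word $w_2 = s_1 s_2$. This identifies strata (ii) and (iii) with $X_B(w_1)$ and $X_B(w_2)$ respectively.

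The dimensions $0, 1, 2$ of the three strata match $\ell(1), \ell(w_1), \ell(w_2)$ as predicted by the Deligne-Lusztig formula, so each stratum is locally closed of the correct dimension. The closure assertion $Y^{(\pm)}_V = \overline{X_B(w_2)}$ then follows from the irreducibility of the hypersurface together with the density of stratum (iii); the case of $Y^{(-)}_V$ reduces to $Y^{(+)}_V$ via the symplectic duality $U \mapsto U^\perp$, which yields an isomorphism $Y^{(-)}_V \cong Y^{(+)}_V$ intertwining the two decompositions. The main obstacle is to execute the relative position computation rigorously, namely to verify in a concrete symplectic basis that the transition $(U, L) \to (\sigma U, \sigma L)$ has reduced expression $s_1 s_2$ rather than $s_2 s_1$, and to check that the lift $U \mapsto (U, L)$ indeed induces an algebraic isomorphism between the stratum of $Y^{(+)}_V$ and the corresponding Deligne-Lusztig variety via the projection $G/B \to G/P_{\{1\}}$.
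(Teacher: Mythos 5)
Your proposal takes essentially the same approach as the paper: both stratify $Y^{(\pm)}_V$ by whether the isotropic line $U$ (resp.\ its annihilator, in the $Y^{(-)}_V$ formulation) and the associated Lagrangian $U+\sigma(U)$ (resp.\ $U\cap\sigma(U)$) are $\sigma$-stable, identify the three resulting pieces with $X_{P_{\{1\}}}(1)$, $X_B(w_1)$, $X_B(w_2)$, and obtain the closure from the irreducibility of the Deligne--Lusztig surface of \cite{DL-Ann76}. The relative-position check you flag as the remaining obstacle does come out as you guess (it is $s_1s_2$ and not $s_2s_1$, because $\sigma U\subset L\cap\sigma L$ while $U\not\subset\sigma L$ when $\sigma L\neq L$), and the paper's proof is similarly brief at this point, passing quickly to the concrete set-theoretic descriptions of the three strata.
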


\begin{proof}
We use the description of $Y^{(-)}_{V}$ as in \eqref{DL_vertex_0}.
One has $Y^{(-)}_{V}=X_{P_{\{1\}}}(1)\cup X_{P_{\{1\}}}(w_{1})$ and $X_{P_{1}}(w_{1})=X_{B}(w_{1})\cup X_{B}(w_{2})$. In fact using the description as \ref{DL_vertex_0}, one has the following descriptions
\begin{itemize}
\item[-] $X_{P_{\{1\}}}(1)$ consists of $U$ that is $\sigma$-stable;
\item[-] $X_{B}(w_{1})$ consists of $U$ that is not $\sigma$-stable but $U\cap \sigma(U)$ is $\sigma$-stable and is a totally isotropic plane;
\item[-] $X_{B}(w_{2})$ consists of $U$ that is not $\sigma$-stable and $U\cap \sigma(U)$ is not $\sigma$-stable but is a totally isotropic plane.
\end{itemize}

There is a more concrete description of the each stratum. The closed stratum $X_{P_{\{1\}}}(1)$ consists of a discrete set of $\FF_{p}$-rational points of $Y^{(-)}_{V}(\FF)$.

The irreducible components of $X_{B}(w_{1})$ are the complement of $\FF_{p}$-points in $\PP^{1}$. Indeed, since $U$ is not $\sigma$-stable, $U\cap\sigma(U)$ is of dimension $2$. Moreover $U\cap \sigma(U)$ has the property that $U\cap\sigma(U)= U^{\perp}+ \sigma(U)^{\perp}=(U\cap \sigma(U))^{\perp}$. The line $U^{\perp}\subset U\cap\sigma(U)$ gives a point in $\PP^{1}$. This point is not $\FF_{p}$-rational as $U$ is not $\sigma$-stable. Conversely, given any $\sigma$-stable plane $T$ in $V$ with the property that $T=T^{\perp}$ and a line $U^{\perp}\subset T$, $U$ gives a point in $X_{B}(w_{1})$. Indeed, $U^{\perp}\subset T=T^{\perp}\subset U$ and $U\cap \sigma(U)=T$ follows by considering the index among the vector spaces. 

The open stratum $X_{B}(w_{2})$ is the surface defined as in  \cite[2.4]{DL-Ann76}. It is the surface defined by  the set of $x\in\PP(V)$  such that $(x, \sigma(x))=0$ and $(x,\sigma^{2}(x))\neq 0$. By choosing a coordinate system on $\PP(V)$, say $x_{0}, x_{1}, x_{2}, x_{3}$, we see it is defined by removing from the surface $$x_{3}^{p}x_{0}-x^{p}_{0}x_{3}+x^{p}_{2}x_{1}-x^{p}_{1}x_{2}=0$$ a collection of $\PP^{1}$, one for each isotropic plane defined over $\FF_{p}$.
The closure of $X_{B}(w_{2})$ is $Y_{V}$ and it is the surface defined by $x_{3}^{p}x_{0}-x^{p}_{0}x_{3}+x^{p}_{2}x_{1}-x^{p}_{1}x_{2}=0$.

\end{proof}

\subsection {Vertex lattices in $\GSp(4)$}\label{vetex-lattices}
The affine Dynkin diagram of type $\tilde{C}_{2}$ is given by
\begin{displaymath}
  \xymatrix{\underset{0}\bullet \ar@2{->}[r] &\underset{1}\bullet &\underset{2}\bullet\ar@2{->}[l]}
\end{displaymath}
where the nodes $0$ and $2$ are special and $1$ is not special in the sense that the parahoric subgroup corresponding to $0$ and $2$ are special parahoric subgroups.

Let $V$ be symplectic space over $\QQ_{p}$ of dimension $4$. The vertices of a base alcove of the building for $\Sp(4)(V)$ is identified with the nodes in the Dynkin diagram. Thus we label the vertices by the set $\{0, 1, 2\}$. The vertex $0$ corresponds to a lattice $L$ with $pL^{\vee}\subset^{4} L \subset^{0} L^{\vee}$, the vertex $1$ corresponds to a lattice $L$ with $pL^{\vee}\subset^{2} L \subset^{2} L^{\vee}$ and the vertex $2$ corresponds to a lattice $L$ with $pL^{\vee}\subset^{0} L \subset^{4} L^{\vee}$. We restrict the form $(\cdot,\cdot)_{0}$ to $C$ and consider $C$ as a symplectic space. The vertices of the base alcove can also be identified with the maximal parahorics of the group $\GSp(4)(\QQ_{p})$. The vertices $0$ and $2$ both correspond to the \emph{hyperspecial parahoric} and vertex $1$ corresponds to the \emph{paramodular parahoric}. The edge connecting the vertices $0$ and $2$ corresponds to the \emph{Siegel parahoric} which is not maximal.

Let $b$ be an element in its $\sigma$-conjugacy class which gives the isocrystal $N$ and let $J_{b}$ be the $\sigma$-conjugate centralizer of $b$ in the group $\GU_{B_{p}}(N)$, see \cite[Proposition 1.12]{RZ-Aoms}. Consider the slope zero isocystal $(N_{0}, \tau)$, by a theorem of Dieudonn\'{e} it gives rise to a factorization $N_{0}=C\otimes_{\QQ_{p}} K_{0}$ where $C=N^{\tau=1}_{0}$. We restrict the form $(\cdot,\cdot)_{0}$ to $C$ and thus consider $C$ as a symplectic space over $\QQ_{p}$.

\begin{lemma}
There is an isomorphism between the group $J_{b}(\QQ_{p})$ the $\sigma$-conjugate centralizer of $b$ which gives the isocystal $N$ and the group $\GSp(C)$. 
\end{lemma}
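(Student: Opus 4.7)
The plan is to bootstrap from the identification $\GU_{B_p}(N) \cong \GSp(N_0)$ established in Section \ref{quaternionic-module}, and then apply descent for slope-zero isocrystals. First, I would rewrite $J_b$ using this identification: an element of $\GU_{B_p}(N)(K_0)$ is by definition an automorphism of $N$ commuting with the $\calO_{B_p}$-action and preserving $(\cdot,\cdot)$ up to a scalar; under restriction to $N_0$ this becomes precisely the group of symplectic similitudes of $(N_0, (\cdot,\cdot)_0)$. Thus $J_b(\QQ_p)$ translates to the subgroup of $\GSp(N_0)(K_0)$ consisting of those $g$ satisfying $g \cdot b = b \cdot \sigma(g)$, where the Frobenius $F$ on $N$ restricts to an operator on the $N_0$-side that, when modified by the $\Pi$-swap, becomes exactly $\tau = \Pi V^{-1}$.

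Second, I would show that the condition ``$g^{-1} b \sigma(g) = b$'' rewrites cleanly as ``$g$ commutes with $\tau$'' on $N_0$. The point is that $V$ swaps $N_0$ and $N_1$, while $\Pi$ also swaps them, so $\tau = \Pi V^{-1}$ preserves $N_0$; tracking how conjugation by $g$ interacts with the two swaps, the $\sigma$-conjugation relation on the level of $b$ becomes the plain commutation relation with $\tau$ on $N_0$. Thus
\[
J_b(\QQ_p) = \{ g \in \GSp(N_0)(K_0) : g\tau = \tau g \}.
\]

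Third, I would invoke Dieudonn\'e's theorem for the slope-zero isocrystal $(N_0, \tau)$: the natural map $C \otimes_{\QQ_p} K_0 \to N_0$ is an isomorphism, where $C = N_0^{\tau=1}$. Any $g \in \GL(N_0)$ commuting with $\tau$ preserves $C$ and is determined by its restriction to $C$; conversely, any $\QQ_p$-linear automorphism of $C$ extends $K_0$-linearly to a $\tau$-commuting automorphism of $N_0$. The remaining task is to match the symplectic structures: I would check that $(\cdot,\cdot)_0$ is $\tau$-invariant up to a scalar, which forces its restriction to $C \times C$ to be $\QQ_p$-valued and non-degenerate, giving $C$ the structure of a symplectic space over $\QQ_p$. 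A $\tau$-commuting symplectic similitude of $N_0$ then restricts to a symplectic similitude of $C$, yielding the desired isomorphism $J_b(\QQ_p) \cong \GSp(C)(\QQ_p)$.

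The main subtle point, and where I would focus the care, is the first step: tracking how the $\sigma$-conjugacy class of $b$ in $\GU_{B_p}(N)$ transforms into the operator $\tau = \Pi V^{-1}$ after restriction to $N_0$. Once this compatibility is pinned down, and one verifies that $(\cdot,\cdot)_0$ induces a well-defined symplectic form on $C$ (this uses the compatibility of $\Pi$ with $F,V$ and the neben-involution relations $(Fx,y) = (x,Vy)^\sigma$ and $(\Pi x, y) = (x, \Pi y)$), the rest is a formal consequence of Galois descent for slope-zero isocrystals.
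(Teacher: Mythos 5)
Your proposal takes essentially the same route as the paper: identify $\GU_{B_p}(N)$ with $\GSp(N_0)$ via the $\Pi$-twist, observe that an element of $J_b$ respects $N=N_0\oplus N_1$ and commutes with $F$ (hence is determined by its restriction to $N_0$ and commutes with $\tau=\Pi V^{-1}$), and then apply Dieudonn\'e descent for the slope-zero isocrystal $(N_0,\tau)$ to land in $\GSp(C)$. You spell out more explicitly than the paper the translation from $g^{-1}b\sigma(g)=b$ to $g\tau=\tau g$ and the check that $(\cdot,\cdot)_0$ descends to a $\QQ_p$-valued form on $C$, but the skeleton of the argument is identical to the one in the text (which also defers to \cite[1.42]{RZ-Aoms}).
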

\begin{proof}
By definition, we have $J_{b}=\{g\in \GU_{B_{p}}(N); gF=Fg\}$. Since $g$ commutes with the action of $B_{p}$, it respects the decomposition of $N=N_{0}\oplus N_{1}$.  Since it also commutes with the Frobenius $F$, the action is uniquely determined on its restriction to $N_{0}$. The result then follows from the fact that $g$ also commutes with $\tau$. This claim is also stated in \cite[1.42]{RZ-Aoms}. 
\end{proof}

\begin{definition}
We consider $\ZZ_{p}$-lattices $L$ in $C$.  We call such a lattice $L$ a vertex lattice of type $i$ if it represents a vertex of type $i$ in the fixed base alcove of the Bruhat-Tits building of $\Sp(C)$ for $i\in\{0,1,2\}$.
\end{definition}

Next we associate to each vertex lattice $L$ a characteristic subset $\mathcal{M}_{L}(\FF)$ of $\mathcal{M}(\FF)$.

\begin{definition}\label{stratum-set}
\hfill
\begin{enumerate}
\item For a vertex lattice $L_{0}$ of type $0$, the stratum $\mathcal{M}_{L_{0}}(\FF)$ is the set $$ \{D\in \mathcal{M}(\FF): D\subset L_{0,W_{0}}\}.$$
\item For a vertex lattice $L_{2}$ of type $2$, the stratum $\mathcal{M}_{L_{2}}(\FF)$ is the set $$ \{D\in \mathcal{M}(\FF): L_{2, W_{0}}\subset D\}.$$
\item For a vertex lattice $L_{1}$ of type $1$, the stratum $\mathcal{M}_{L_{1}}(\FF)$ is the set $$\{D\in \mathcal{M}(\FF): L_{1, W_{0}}\subset D\}.$$ 
\end{enumerate}
We refer to these sets $\calM_{L_{i}}(\FF)$ as lattice strata of $\calM(\FF)$. 
\end{definition}

To show these lattice strata give a partition of the set $\mathcal{M}(\FF)$,  we will rely on an analogue of the crucial lemma as in \cite[Lemma 2.1]{Vol-can10} .
\begin{proposition}\label{crucial_lemma}
Given $D\in \mathcal{M}(\FF)$, we can find a $\tau$-stable lattice $L(D)$ in $N_{0}$ that it either fits in the following chain of lattices
\begin{equation}\label{0-4type}pL(D)^{\vee}\subset pD^{\vee}\subset D\subset L(D)\subset L(D)^{\vee}\subset D^{\vee}\end{equation} 
or it fits in 
\begin{equation}\label{4-0type}pD^{\vee}\subset pL(D)^{\vee}\subset L(D)\subset D\subset D^{\vee}\subset L(D)^{\vee}.\end{equation}
\end{proposition}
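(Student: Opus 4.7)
The plan is to construct $L(D)$ as a $\tau$-stable lattice close to $D$, obtained by iterated saturation of $D$ under $\tau$. The two chains in the conclusion correspond to saturating above $D$ (giving a type-$0$ vertex) or below $D$ (giving a type-$2$ vertex). The Pappas bound $\dim_{\FF}(D + \tau D)/D \leq 1$ from Proposition \ref{set2} controls the growth of the iterates and forces stabilization.

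I first record the compatibility of $\tau$ with the form: from $(Fx, y) = (x, Vy)^{\sigma}$, $\Pi^{*}=\Pi$ and $\tau = \Pi V^{-1}$, one computes $(\tau x, \tau y)_{0} = \sigma((x, y)_{0})$, so $\tau(X^{\vee}) = \tau(X)^{\vee}$ for every $W_{0}$-lattice $X \subset N_{0}$; in particular $\tau$-stable lattices are closed under duality. I then let $L^{+}(D)$ denote the smallest $\tau$-stable lattice containing $D$ (obtained as the intersection of all $\tau$-stable lattices containing $D$, a set nonempty since a sufficiently large vertex lattice works), and set $L^{-}(D) := L^{+}(D^{\vee})^{\vee}$, the largest $\tau$-stable lattice contained in $D$. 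The Pappas bound gives $\dim_{\FF}(L^{+}_{1}/L^{+}_{0}) \leq 1$ for the partial sums $L^{+}_{n} := \sum_{k=0}^{n} \tau^{k}(D)$, and since $L^{+}_{n+1} = L^{+}_{n} + \tau L^{+}_{n}$, the $\tau$-action induces a surjection $L^{+}_{n}/L^{+}_{n-1} \twoheadrightarrow L^{+}_{n+1}/L^{+}_{n}$, so the dimensions of the increments form a non-increasing sequence. Combined with the a priori upper bound $L^{+}_{n} \subseteq \Lambda$ for any fixed $\tau$-stable lattice $\Lambda \supseteq D$, the chain stabilizes, giving $L^{+}(D) = L^{+}_{N}$ for some $N$.

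With $L := L^{+}(D)$, I run the dichotomy. When $L \subseteq L^{\vee}$, I take $L(D) := L$; combining $D \subseteq L$, $L^{\vee} \subseteq D^{\vee}$ (by dualizing), and $pD^{\vee} \subseteq D$ produces chain \eqref{0-4type}. Using $[D^{\vee}:L^{\vee}] = [L:D]$ by duality and $[D^{\vee}:D] = 2$, the length relation $2[L:D] + [L^{\vee}:L] = 2$ admits only $([L:D],[L^{\vee}:L]) = (0,2)$ (the degenerate case $L = D$ with $D$ a type-$1$ vertex) or $(1, 0)$ ($L$ a type-$0$ vertex). When $L \not\subseteq L^{\vee}$, I take $L(D) := L^{-}(D)$; the chain $L^{-}(D) \subseteq D \subseteq D^{\vee} \subseteq L^{-}(D)^{\vee}$ is automatic and yields \eqref{4-0type}, while the parallel length count $2[D:L^{-}(D)] + 2 = [L^{-}(D)^{\vee}:L^{-}(D)]$ identifies $L^{-}(D)$ as a type-$2$ vertex with $[D:L^{-}(D)] = 1$ in the non-degenerate case.

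The main obstacle is the stabilization of the saturation chain. The one-step Pappas bound is insufficient on its own; the key ingredients are the monotonicity of the increment dimensions via the $\tau$-induced surjection and the containment of the whole chain inside some ambient $\tau$-stable lattice. Once the stabilization is in hand, the explicit colength accounting above pins down the vertex type of $L(D)$ in each of the two cases.
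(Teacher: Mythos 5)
Your overall architecture (build the $\tau$-saturation $L^{+}(D)$ from above, dualize to get $L^{-}(D)$ from below, dichotomize, then count lengths against $[D^{\vee}:D]=2$) is a sensible reformulation of the paper's proof, and your stabilization argument via the surjection $L^{+}_{n}/L^{+}_{n-1}\twoheadrightarrow L^{+}_{n+1}/L^{+}_{n}$ together with an ambient $\tau$-stable lattice is a clean alternative to the paper's appeal to \cite[Proposition 2.7]{RZ-Aoms}. The first branch of your dichotomy is also correct: if $L:=L^{+}(D)\subseteq L^{\vee}$, then $[D^{\vee}:L^{\vee}]=[L:D]$ and the length relation $2[L:D]+[L^{\vee}:L]=2$ does force $[L:D]\leq 1$, so $L=D+\tau(D)$ is $\tau$-stable and $L$ is a vertex of type $1$ or $0$; this branch works.

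The second branch is where the proof breaks. You assert that when $L^{+}(D)\not\subseteq L^{+}(D)^{\vee}$, the chain $L^{-}(D)\subseteq D\subseteq D^{\vee}\subseteq L^{-}(D)^{\vee}$ ``is automatic and yields \eqref{4-0type},'' and then invoke a ``parallel length count.'' But \eqref{4-0type} contains the crucial extra inclusion $pL(D)^{\vee}\subseteq L(D)$, which is not automatic; it is precisely the statement that $L^{-}(D)$ is a vertex lattice. Worse, the length count is not parallel to the first case: there the constraint came from the \emph{hypothesis} $L^{+}\subseteq L^{+\vee}$ making $[L^{+\vee}:L^{+}]\geq 0$ appear in $2[L^{+}:D]+[L^{+\vee}:L^{+}]=2$, forcing $[L^{+}:D]\leq 1$. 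In the second case the analogous containment $L^{-}\subseteq L^{-\vee}$ holds \emph{trivially} (since $L^{-}\subseteq D\subseteq D^{\vee}\subseteq L^{-\vee}$), and the identity $[L^{-\vee}:L^{-}]=2[D:L^{-}]+2$ is a tautology that places no upper bound on $[D:L^{-}]$. There is therefore no symmetry that delivers $[D:L^{-}(D)]\leq 1$ for free. What is actually needed is the paper's key claim: if $D+\tau(D)$ is not $\tau$-stable, then $D\cap\tau(D)$ \emph{is} $\tau$-stable (so that $L^{-}(D)=D\cap\tau(D)$, $[D:L^{-}]=1$, and then $pL^{-\vee}=pD^{\vee}+p\tau(D)^{\vee}\subseteq D\cap\tau(D)$ follows from the defining conditions on $\calM(\FF)$). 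The paper proves this by contradiction, using the Pappas bound to pin down each colength in the chain $L_{j}(D)$, deducing $\tau(L_{j-2})=L_{j-1}\cap\tau(L_{j-1})$, and then showing that if both $D\pm\tau(D)$ failed to be $\tau$-stable one would be forced into $L_{d}(D)\subseteq L_{d}(D)^{\vee}$ and hence $L_{d}(D)=D+\tau(D)$, a contradiction. That argument is the genuine content of the proposition, and your proposal does not engage with it.
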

\begin{proof}
The proof of this is similar to  \cite{Vol-can10} {Lemma 2.1} but there the Kottwitz condition is different.

Suppose $D$ is $\tau$-stable, then there is nothing to prove and we simply let $L(D)=D$.  Otherwise, the Pappas condition shows that $D\subset^{1} D+\tau(D)$. Suppose that $D+\tau(D)$ is $\tau$-stable, then we set $L(D)=D+\tau(D)$. Notice that Since $D\in \mathcal{M}(\FF)$, we have \begin{equation}\label{condition}pD^{\vee}\subset D\subset D^{\vee} \text{ and } pD^{\vee}\subset \tau(D)\subset D^{\vee}.\end{equation} Therefore $L(D)=D+\tau(D)\subset L(D)^{\vee}=D^{\vee}\cap \tau(D)^{\vee}$ and  $L(D)$ fits in \eqref{0-4type}.

Now suppose $D+\tau(D)$ is not $\tau$-stable and we claim that $D\cap \tau(D)$ is $\tau$-stable. Indeed, suppose otherwise $D\cap\tau(D)$ is not $\tau$-stable, by \eqref{condition}, we have $p\tau(D)^{\vee}\subset^{1} D\cap \tau(D)$ and $p\tau(D)^{\vee}\subset^{1} \tau(D)\cap\tau^{2}(D)$. Hence \begin{equation}\label{three-intersection}D\cap \tau(D)\cap \tau^{2}(D)=p\tau(D)^{\vee}.\end{equation} Now, we consider the family of lattices $L_{j}(D)=D+\tau(D)+\cdots +\tau^{j}(D)$. There is a minimal $d$ such that $L_{d}(D)=\tau(L_{d}(D))$ by \cite[Proposition 2.7]{RZ-Aoms}  and we set $L(D)=L_{d}(D)$.  From the Pappas condition in \ref{set2}, we deduce that $$\tau(L_{j-2}(D))\subset^{1} L_{j-1}(D)\subset^{1} L_{j}(D)$$ and $$\tau(L_{j-2}(D))\subset^{1} \tau(L_{j-1}(D))\subset^{1} L_{j}(D).$$ It follows that \begin{equation}\label{inter}\tau(L_{j-2}(D))=L_{j-1}(D)\cap \tau(L_{j-1}(D)).\end{equation} By \eqref{condition} $D+\tau(D)\subset D^{\vee}\subset p^{-1}\tau(D)$ and  $\tau(D)+\tau^{2}(D)\subset \tau(D)^{\vee}\subset p^{-1}\tau(D)$. Then we have $D+\tau(D)+\tau^{2}(D)\subset p^{-1}\tau(D)$. Thus $$L_{d}(D)=D+\tau(D)+\cdots +\tau^{d}(D)\subset p^{-1}\tau(D)+p^{-1}\tau^{2}(D)+\cdots p^{-1}\tau^{d-1}(D)\subset p^{-1}L_{d-1}(D).$$ 
Applying $\tau^{l}$ for any integer $l$ on both sides, we get $L_{d}(D)\subset p^{-1} \bigcap_{l\in \ZZ}\tau^{l}(L_{d-1}(D))$. It follows from \eqref{inter} that $$\bigcap_{l\in\ZZ}L_{d-1}(D)=\bigcap_{l\in\ZZ}L_{d-2}(D)=\cdots= \bigcap_{l\in \ZZ}D.$$ Hence $$L_{d}(D)\subset p^{-1} \bigcap_{l\in \ZZ}\tau^{l}(L_{d-1}(D))= p^{-1}\bigcap_{l\in \ZZ}\tau^{l}(D).$$ Now we apply \eqref{three-intersection} and we find $$L_{d}(D)\subset p^{-1} \bigcap_{l\in \ZZ}\tau^{l}(L_{d-1}(D))= p^{-1}\bigcap_{l\in \ZZ}\tau^{l}(D)=\bigcap_{l\in \ZZ} \tau^{l}(D)^{\vee}=L_{d}(D)^{\vee}.$$ Then $L_{d}(D)$ satisfies \eqref{0-4type} and it follows that the indices among the lattices has to be $pL_{d}(D)^{\vee}\subset^{1} pD^{\vee}\subset^{2} D\subset^{1} L(D)\subset^{0} L_{d}(D)^{\vee}\subset^{1} D^{\vee}$. This implies $L_{d}(D)=D+\tau(D)$ and this is a contradiction to the assumption that $D+\tau(D)$ is not $\tau$ stable. This proves in this case $D\cap \tau(D)$ is $\tau$-stable and we set $L(D)=D\cap\tau(D)$. Then one checks easily as before that $L(D)$ satisfies \eqref{4-0type}.

 \end{proof}

\begin{remark}
The lattice $L(D)$ in the previous proposition gives rise to a vertex lattice $L$ by $L=L(D)^{\tau=1}$.
\end{remark}

\begin{definition}
For $i=0,1,2$, we define the Bruhat-Tits stratum of type $i$ to be
 $$\calM_{\{i\}}(\FF)= \bigcup_{L_{i}}\calM_{L_{i}}(\FF)$$ where the union is taken over all the the vertex lattices of type $i$.
\end{definition}

\begin{corollary}
We have a decomposition of the set $\calM(\FF)$ by $$\mathcal{M}(\FF)=\mathcal{M}_{\{0\}}(\FF)\cup\mathcal{M}_{\{1\}}(\FF)\cup \mathcal{M}_{\{2\}}(\FF).$$ 
\end{corollary}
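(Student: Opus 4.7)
The plan is to take an arbitrary $D \in \calM(\FF)$, apply Proposition~\ref{crucial_lemma} to produce a $\tau$-stable lattice $L(D)$ sitting in the chain \eqref{0-4type} or \eqref{4-0type}, and then read off from the colengths which of the three types in $\{0,1,2\}$ the associated vertex lattice $L := L(D)^{\tau=1}$ has. Since $(N_0, \tau)$ is isoclinic of slope zero, $L$ is automatically a $\ZZ_p$-lattice in $C$ with $L \otimes_{\ZZ_p} W_0 = L(D)$, as noted in Section~\ref{vetex-lattices}. Once the type of $L$ is identified, Definition~\ref{stratum-set} directly places $D$ in the corresponding stratum.

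To pin down the type, I would do a short colength calculation. Let $a$ denote the colength of $D \subset L(D)$ in case \eqref{0-4type}, and of $L(D) \subset D$ in case \eqref{4-0type}. Taking perpendiculars with respect to $(\cdot,\cdot)_0$ sends these to inclusions $L(D)^\vee \subset D^\vee$ and $D^\vee \subset L(D)^\vee$ of the same colength $a$. Combining with $D \subset^{2} D^{\vee}$, the colength of $L(D) \subset L(D)^{\vee}$ works out to $2 - 2a$ in case \eqref{0-4type} and $2 + 2a$ in case \eqref{4-0type}, so in each situation only $a \in \{0,1\}$ can occur.

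A case-by-case reading now identifies the type of $L$. In either chain, $a = 0$ forces $D = L(D)$ together with $pL^{\vee} \subset^{2} L \subset^{2} L^{\vee}$, so $L$ is of type $1$ and $D = L_{W_0} \in \calM_{L}(\FF) \subset \calM_{\{1\}}(\FF)$. In \eqref{0-4type} with $a = 1$ we obtain $L = L^{\vee}$, so $L$ is of type $0$ and $D \subset L_{W_0}$ yields $D \in \calM_{L}(\FF) \subset \calM_{\{0\}}(\FF)$. In \eqref{4-0type} with $a = 1$ we obtain $pL^{\vee} = L$, so $L$ is of type $2$ and $L_{W_0} \subset D$ yields $D \in \calM_{L}(\FF) \subset \calM_{\{2\}}(\FF)$. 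These four possibilities exhaust the output of Proposition~\ref{crucial_lemma}, giving the claimed decomposition.

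The main obstacle is really only bookkeeping: one has to keep the duality conventions straight (in particular the effect of $(\cdot)^{\vee}$ on the colength of an inclusion and the factor of $p$ between $L^{\vee}$ and $pL^{\vee}$) and match them against the vertex-type convention recorded in Section~\ref{vetex-lattices}. All the substantive geometric input has already been packaged into Proposition~\ref{crucial_lemma}, so no further construction of lattices is required.
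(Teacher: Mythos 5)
Your argument is correct and follows the same strategy as the paper: invoke Proposition~\ref{crucial_lemma} to produce the $\tau$-stable lattice $L(D)$, then read off the vertex type from the colengths. The only cosmetic difference is that you derive the bound $a\in\{0,1\}$ by colength arithmetic (using $pL(D)^\vee\subset L(D)\subset L(D)^\vee$ together with $D\subset^{2}D^\vee$), whereas the paper simply records the exact indices in the two chains directly from the construction of $L(D)$ in the proof of Proposition~\ref{crucial_lemma}; both routes lead to the same trichotomy.
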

\begin{proof}
This follows from Proposition \ref{crucial_lemma}. Indeed, if $D$ is $\tau$-invariant, then we have $pL^{\vee}(D)\subset^{2} L(D)\subset^{2} L^{\vee}(D)$ and $D=L(D)$ gives a vertex lattice of type $1$ and $D\in \calM_{\{1\}}$.

If $D$ is not $\tau$-invariant, then $L(D)$ either fits in $$pL(D)^{\vee}\subset^{1} pD^{\vee}\subset^{2} D\subset^{1} L(D)\subset^{0} L(D)^{\vee}\subset^{1} D^{\vee}$$ or fits in $$pD^{\vee}\subset^{1} pL(D)^{\vee}\subset^{0} L(D)\subset^{1} D\subset^{2} D^{\vee}\subset^{1} L(D)^{\vee}.$$

In the first case $L(D)$ gives a vertex lattice of type $0$ and $D\in \calM_{\{0\}}$. In the second case $L(D)$ gives a vertex lattice of type $2$ and $D\in \calM_{\{2\}}$.
\end{proof}

\subsection{Ekedahl-Oort stratifications of lattice strata} The goal of this section is to define, for each vertex lattice $L_{i}$ of type $i$ with $i=0, 2$, a set theoretic \emph{Ekedahl-Oort stratification} of the lattice stratum $\calM_{L_{i}}(\FF)$. We define the following subsets of $\calM_{L_{0}}(\FF)$ which we will refer to as the \emph{Ekedahl-Oort strata}
\begin{equation}
\begin{split}
&\mathcal{M}^{\circ}_{L_{0}}(\FF) =\mathcal{M}_{L_{0}}(\FF) -( \mathcal{M}_{\{2\}}(\FF)\cup \mathcal{M}_{\{1\}}(\FF));\\
&\calM_{L_{0},\{2\}}(\FF) =\calM_{L_{0}}(\FF)\cap \calM_{\{2\}}(\FF);\\
&\calM^{\circ}_{L_{0},\{2\}}(\FF) =(\calM_{L_{0}}(\FF)\cap \calM_{\{2\}}(\FF))- \calM_{\{1\}}(\FF);\\
&\calM_{L_{0},\{1\}}(\FF)=\calM_{L_{0}}(\FF)\cap \calM_{\{1\}}(\FF).\\
\end{split}
\end{equation}
Then $\calM_{L_{0}}(\FF)$ admits a decomposition which we call the \emph{Ekedahl-Oort stratification}  
\begin{equation}
\calM_{L_{0}}(\FF)=\calM^{\circ}_{L_{0}}(\FF)\sqcup \calM^{\circ}_{L_{0},\{2\}}(\FF)\sqcup \calM_{L_{0}, \{1\}}(\FF).
\end{equation}
In the proposition below we will describe the Ekedhal-Oort strata in terms of Deligne-Lusztig varieties. We define the symplectic space $V(L_{0})$ of dimension $4$ over $\FF$  by $L_{0, W_{0}}/pL^{\vee}_{0, W_{0}}$ and  we put $Y_{L_{0}}=Y^{(-)}_{V(L_{0})}$.

\begin{proposition}\label{DL0}
There is a bijection between $\mathcal{M}_{L_{0}}(\FF)$ and $Y_{L_{0}}(\FF)$. This bijection is compatible with the stratification on both sides in the sense that
\begin{itemize} 
\item[-] the stratum $\calM^{\circ}_{L_{0}}(\FF)$ is in bijection with $X_{B}(w_{2})(\FF)$;  
\item[-] the stratum $\calM^{\circ}_{L_{0},\{2\}}(\FF)$ is in bijection with $X_{B}(w_{1})(\FF)$;
\item[-] the stratum $\calM_{L_{0}, \{1\}}(\FF)$ is in bijection with $X_{P_{\{1\}}}(1)(\FF)$.
\end{itemize}
\end{proposition}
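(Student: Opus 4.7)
The plan is to describe an explicit bijection $D\mapsto U_D:=D/pL_{0,W_0}^{\vee}$. Since $L_0$ is of type $0$, it is self-dual and the denominator equals $pL_{0,W_0}$, so $U_D$ is a subspace of $V(L_0)=L_{0,W_0}/pL_{0,W_0}$. The inclusion $D\subset L_{0,W_0}=L_{0,W_0}^{\vee}$ combined with $pD^{\vee}\subset D\subset D^{\vee}$ from Proposition \ref{set2} yields the chain
$$pL_{0,W_0}\subset pD^{\vee}\subset D\subset L_{0,W_0},$$
and a short colength calculation (using self-duality of $L_0$ and that $pD^{\vee}\subset^{2} D\subset^{2} D^{\vee}$) forces $\dim_{\FF}U_D=3$.

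Next I will translate each defining condition of $\calM(\FF)$ in Proposition \ref{set2} into a defining condition of $Y^{(-)}_{V(L_0)}$ given in \eqref{DL_vertex_0}. The identity $\{x\in L_{0,W_0}:(x,D)\subset pW_0\}=pD^{\vee}$ identifies $U_D^{\perp}$ inside $V(L_0)$ with $pD^{\vee}/pL_{0,W_0}$; hence the isotropy $D\subset D^{\vee}$ converts to $U_D^{\perp}\subset U_D$. The same computation applied to $\tau(D)$, together with the fact that $L_{0,W_0}$ is $\tau$-stable (so the form on $V(L_0)$ is Frobenius-equivariant), turns $pD^{\vee}\subset \tau(D)\subset D^{\vee}$ into $U_D^{\perp}\subset \sigma(U_D)$, and the Pappas condition $\dim_{\FF}(D+\tau(D))/D\leq 1$ becomes $\dim_{\FF}U_D/(U_D\cap\sigma(U_D))\leq 1$. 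The inverse map sends $U\in Y_{L_0}(\FF)$ to $D:=\pi^{-1}(U)\subset L_{0,W_0}$ where $\pi\colon L_{0,W_0}\twoheadrightarrow V(L_0)$, and running the same dictionary backwards shows that $D\in\calM_{L_0}(\FF)$. This establishes the bijection.

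For the stratum matching I will use the characterizations of the three pieces of $Y^{(-)}_{V(L_0)}$ recorded in the proof of Theorem \ref{DL-stratum}. If $U\in X_{P_{\{1\}}}(1)$, i.e., $U$ is $\sigma$-stable, then $D=\pi^{-1}(U)$ is $\tau$-stable, and the chain above shows that $D$ itself is a vertex lattice of type $1$ contained in $L_{0,W_0}$, placing $D$ in $\calM_{L_0,\{1\}}(\FF)$; conversely any $\tau$-stable $D\in\calM_{L_0}(\FF)$ yields a $\sigma$-stable $U_D$. If $U\in X_B(w_1)$, then the plane $T:=U\cap\sigma(U)$ is $\sigma$-stable and totally isotropic; its preimage under $\pi$ is a $\tau$-invariant lattice descending to an $\FF_p$-rational type-$2$ vertex lattice $L_2\subset L_0$ with $L_{2,W_0}\subset D$, so $D\in\calM^{\circ}_{L_0,\{2\}}(\FF)$. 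Conversely, any type-$2$ lattice $L_2$ with $L_{2,W_0}\subset D\subset L_{0,W_0}$ produces such a $\sigma$-stable isotropic plane $L_2/pL_0\subset U_D\cap\sigma(U_D)$. The remaining open stratum $X_B(w_2)(\FF)$ then matches $\calM^{\circ}_{L_0}(\FF)$ by complementation.

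The main obstacle is the bookkeeping in the second paragraph: verifying that every duality and index condition in Proposition \ref{set2} corresponds cleanly to the right condition on $V(L_0)$, in particular that the self-duality of $L_0$ makes the $\ZZ_p$-valued symplectic form descend compatibly and that $\perp$ commutes with $\sigma$. Once this dictionary is set, the stratum compatibility is formal from the classification of vertex lattices of types $1$ and $2$ sitting inside the fixed type-$0$ lattice $L_0$, via $\tau$-invariant subobjects of $L_{0,W_0}$.
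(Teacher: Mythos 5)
Your proposal is correct and follows essentially the same route as the paper: both send $D$ to the flag $pD^{\vee}/pL_{0,W_0}^{\vee}\subset D/pL_{0,W_0}^{\vee}$ in $V(L_0)$ and then match strata via $\tau$-stability of $D$, of $D\cap\tau(D)$, and their negations. Your write-up is slightly more explicit about the dictionary between the conditions of Proposition~\ref{set2} and the defining conditions of $Y^{(-)}_{V(L_0)}$ (e.g.\ noting that $\tau(D)\subset D^{\vee}$ is automatic once $D\subset L_{0,W_0}=L_{0,W_0}^{\vee}\subset D^{\vee}$), but this is elaboration rather than a different argument.
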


\begin{proof}
Given a point $D\in \mathcal{M}_{L_{0}}(\FF)$, we have the following chain of inclusions:
$$pL_{0,W_{0}}^{\vee}\subset^{1}pD^{\vee}\subset^{2}D\subset^{1}L_{0,W_{0}}.$$
Recall that $V(L_{0})=L_{0,W_{0}}/pL_{0,W_{0}}^{\vee}$, then sending $D$ to the flag 
$$0\subset^{1}U^{\perp}=pD^{\vee}/pL_{0,W_{0}}^{\vee} \subset^{2}U=D/pL_{0,W_{0}}^{\vee}\subset^{1}L_{0,W_{0}}/pL_{0,W_{0}}^{\vee}$$ gives a map from $\mathcal{M}_{L_{0}}(\FF)$ to $Y_{L_{0}}(\FF)$. 

Given $D\in \mathcal{M}^{\circ}_{L_{0}}(\FF) $, $D\cap \tau(D)$ is not $\tau$-stable by definition of  $\mathcal{M}^{\circ}_{L_{0}}(\FF)$ and hence $U\cap \sigma(U)$ is not $\sigma$-stable. This gives a point in $X_{B}(w_{2})(\FF)$.

Given $D\in \mathcal{M}^{\circ}_{L_{0}, \{2\}}(\FF) $, then there is a vertex lattice $L_{2}$ of type $2$ such that $$pL^{\vee}_{0,W_{0}}\subset^{1} pD^{\vee}\subset^{1} L_{2, W_{0}} \subset^{1} D\subset^{1} L_{0, W_{0}}.$$ Hence comparing $D\cap \tau(D)$ and $L_{2, W_{0}}$ shows that $L_{2, W_{0}}=D\cap \sigma(D)$. Then $U^{\perp}= pD^{\vee}/pL^{\vee}_{0, W_{0}}$ is not $\sigma$-stable but $U\cap \sigma(U)$ is $\sigma$-stable and this is precisely in $X_{B}(w_{1})(\FF)$.

Given $D\in \mathcal{M}^{\circ}_{L_{0}, \{1\}}(\FF) $, then there is vertex lattice $L_{1}$ of type $1$ such that  $$pL^{\vee}_{0, W_{0}}\subset^{1} pD^{\vee}\subset pL^{\vee}_{1, W_{0}}\subset^{2} L_{1,W_{0}} \subset D\subset^{1} L_{0, W_{0}}.$$ This forces $D=L_{1}$ and $U^{\perp}=pD^{\vee}/pL^{\vee}_{0, W_{0}}$ is $\sigma$-stable and this is precisely in $X_{P_{\{1\}}}(1)(\FF)$.

Conversely, if $0\subset U^{\perp}\subset U\subset V(L_{0})=L_{0, W_{0}}/pL^{\vee}_{0, W_){0}}$, then define $D$ as the preimage of $U$ in $L_{0, W_{0}}$. One check easily this gives the desired inverse map.
\end{proof}

We define the following subsets of $\calM_{L_{2}}(\FF)$ similarly as in the previous case
\begin{equation}
\begin{split}
&\mathcal{M}^{\circ}_{L_{2}}(\FF)=\mathcal{M}_{L_{2}}(\FF) -( \mathcal{M}_{\{0\}}(\FF)\cup \mathcal{M}_{\{1\}}(\FF));\\ 
&\calM_{L_{2},\{0\}}(\FF)=\calM_{L_{2}}(\FF)\cap \calM_{\{0\}}(\FF);\\
&\calM^{\circ}_{L_{2},\{0\}}(\FF)=(\calM_{L_{2}}(\FF)\cap \calM_{\{0\}}(\FF))- \calM_{\{1\}}(\FF);\\ 
&\calM_{L_{2},\{1\}}(\FF)=\calM_{L_{2}}(\FF)\cap \calM_{\{1\}}(\FF).\\ 
\end{split}
\end{equation}
Then $\calM_{L_{2}}(\FF)$ admits also the \emph{Ekedhal-Oort stratification} 
\begin{equation}
\calM_{L_{2}}(\FF)=\calM^{\circ}_{L_{2}}(\FF)\sqcup \calM^{\circ}_{L_{2}, \{0\}}(\FF)\sqcup\calM_{L_{2},\{1\}}(\FF).
\end{equation}
We define the symplectic space $V(L_{2})$ of dimension $4$ over $\FF$ by $L^{\vee}_{2, W_{0}}/L_{2, W_{0}}$ and we put $Y_{L_{2}}=Y^{(+)}_{V(L_{2})}$. 

\begin{proposition}\label{DL2}
There is a bijection between $\mathcal{M}_{L_{2}}(\FF)$ and $Y_{L_{2}}(\FF)$. This bijection is compatible with the stratification on both sides in the sense that
\begin{itemize}  
\item[-] The stratum $\calM^{\circ}_{L_{2}}(\FF)$ is in bijection with $X_{B}(w_{2})(\FF)$; 
\item[-] The stratum $\calM^{\circ}_{L_{2},\{0\}}(\FF)$ is in bijection with $X_{B}(w_{1})(\FF)$;
\item[-] The stratum $\calM_{L_{2}, \{1\}}(\FF)$ is in bijection with $X_{P_{\{1\}}}(1)(\FF)$.
\end{itemize}
\end{proposition}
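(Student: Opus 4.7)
The plan is to mirror the proof of Proposition \ref{DL0} after reversing the chain of inclusions: whereas there $L_{0,W_0}$ is the largest lattice in the relevant chain, here $L_{2,W_0}$ is the smallest. Given $D\in\calM_{L_2}(\FF)$, the defining condition $L_{2,W_0}\subset D$ together with $pD^\vee\subset D\subset D^\vee$ and the identity $pL_{2,W_0}^\vee=L_{2,W_0}$ (which characterizes a type-$2$ vertex lattice) forces the symmetric chain
\begin{equation*}
L_{2,W_0}\subset^1 D\subset^2 D^\vee\subset^1 L_{2,W_0}^\vee.
\end{equation*}
Reducing modulo $L_{2,W_0}$ inside the symplectic $\FF$-space $V(L_2)=L_{2,W_0}^\vee/L_{2,W_0}$ produces a one-dimensional isotropic subspace $U:=D/L_{2,W_0}$ with $U^\perp=D^\vee/L_{2,W_0}$, and I would send $D\mapsto U$. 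The Pappas inequality descends to $\dim_\FF (U+\sigma(U))/U\le 1$, and the remaining condition $pD^\vee\subset\tau(D)$ (together with $\tau$-stability of $L_{2,W_0}$, so that $\tau$ descends to $\sigma$ on $V(L_2)$) translates into $U\subset\sigma(U^\perp)$, placing $U$ in $Y^{(+)}_{V(L_2)}(\FF)$.

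For the stratification compatibility I would argue case by case. If $D$ is $\tau$-stable the index chain above exhibits $D$ itself as a vertex lattice of type $1$, so $U$ is $\sigma$-stable and lies in $X_{P_{\{1\}}}(1)$. If $D\in\calM^\circ_{L_2,\{0\}}$ there is a vertex lattice $L_0$ of type $0$ with $L_{2,W_0}\subset D\subset L_{0,W_0}$; index comparison forces $L_{2,W_0}\subset^2 L_{0,W_0}\subset^2 L_{2,W_0}^\vee$, so $L_{0,W_0}/L_{2,W_0}$ is a $\sigma$-stable totally isotropic plane, which must coincide with $U+\sigma(U)$ (since $D$ is not $\tau$-stable, $U\neq\sigma(U)$), placing $U$ in $X_B(w_1)$. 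For $D\in\calM^\circ_{L_2}$ neither case above applies, so $U+\sigma(U)$ cannot be $\sigma$-stable; indeed its preimage would otherwise be a $\tau$-stable lattice $D'$ with $L_{2,W_0}\subset^2 D'\subset^2 L_{2,W_0}^\vee$, and the isotropy of $D'$ together with duality forces $D'=D'^\vee$, hence a type-$0$ vertex lattice containing $D$, contradicting $D\notin\calM_{\{0\}}$; therefore $U\in X_B(w_2)$.

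For the inverse direction, given a flag $U\in Y^{(+)}_{V(L_2)}(\FF)$ I would take $D$ to be the preimage of $U$ in $L_{2,W_0}^\vee$; the index chain, the Kottwitz condition, and the Pappas inequality on $D$ follow by direct translation of the corresponding conditions on $U$, and the stratum labels match by the same case analysis, giving a two-sided inverse. The main technical point is the identification of the symplectic form on $V(L_2)$ together with the verification that $\tau=\Pi V^{-1}$ descends to $\sigma$ there: one must normalize the form on $V(L_2)$ by $p$ so that the pairing $L_{2,W_0}^\vee\times L_{2,W_0}^\vee\to p^{-1}W_0$ induces a perfect pairing modulo $L_{2,W_0}$, and then check that $pD^\vee\subset\tau(D)\subset D^\vee$ becomes precisely the $Y^{(+)}$-condition $U\subset U^\perp\cap\sigma(U^\perp)$. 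Once this bookkeeping is in place the argument is parallel to Proposition \ref{DL0}.
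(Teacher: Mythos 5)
Your proof is correct and takes essentially the same route as the paper, which simply exhibits the map $D\mapsto D/L_{2,W_0}\in V(L_2)=L_{2,W_0}^\vee/L_{2,W_0}$ from the dual chain $L_{2,W_0}\subset^1 D\subset^2 D^\vee\subset^1 L_{2,W_0}^\vee$ and declares that the rest proceeds as in the $L_0$ case; you supply the same case analysis and correctly flag the $p$-rescaling of the form. One small mislabeling: it is $\tau(D)\subset D^\vee$, not $pD^\vee\subset\tau(D)$ (the latter reduces trivially since $pD^\vee\subset pL_{2,W_0}^\vee=L_{2,W_0}$), that descends to $\sigma(U)\subset U^\perp$, i.e.\ $U\subset\sigma(U^\perp)$, but this does not affect your argument.
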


\begin{proof}
Given $D\in \mathcal{M}_{L_{2}}(\FF)$, we have the following chain of inclusions:
$$L_{2,W_{0}} \subset^{1}D\subset^{2}D^{\vee}\subset^{1}L^{\vee}_{2,W_{0}}.$$
Let $V(L_{2})= L^{\vee}_{2, W_{0}}/ L_{2, W_{0}}$, then sending $D$ to the flag $$0\subset^{1}D/L_{2, W_{0}} \subset^{2}D^{\vee}/L_{2, W_{0}} \subset^{1}L^{\vee}_{2, W_{0}}/L_{2, W_{0}} $$ gives the desired map from $\mathcal{M}_{L_{2}}(\FF)$ to $Y_{L_{2}}(\FF)$. The rest of proof proceeds similarly as in the previous case.
\end{proof}

For a vertex lattice $L_{1}$ of type $1$, recall that the stratum $\mathcal{M}_{L_{1}}(\FF)$ is the set $$ \{D \in \mathcal{M}(\FF): L_{1,W_{0}}\subset D\}.$$ This stratum consists of a superspecial point.
\begin{proposition}\label{L1}
The stratum $\mathcal{M}_{L_{1}}(\FF)$ consists of a single superspecial point and $\calM_{\{1\}}$ consists of all the superspecial points.
\end{proposition}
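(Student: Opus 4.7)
The plan is to split Proposition \ref{L1} into two assertions and handle each separately.

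First I would show that $\calM_{L_{1}}(\FF)$ contains exactly one point, namely $D=L_{1,W_{0}}$. Given $D\in\calM_{L_{1}}(\FF)$, Definition \ref{stratum-set}(3) gives $L_{1,W_{0}}\subset D$, and taking duals with respect to $(\cdot,\cdot)_{0}$ yields $D^{\vee}\subset L_{1,W_{0}}^{\vee}$. Combined with $D\subset^{2} D^{\vee}$ from Proposition \ref{set2}, this produces the chain
$$L_{1,W_{0}}\subset D\subset D^{\vee}\subset L_{1,W_{0}}^{\vee}.$$
Since $L_{1}$ is a vertex lattice of type $1$, we have $L_{1,W_{0}}\subset^{2} L_{1,W_{0}}^{\vee}$, so additivity of colengths forces $D=L_{1,W_{0}}$ (and $D^{\vee}=L_{1,W_{0}}^{\vee}$).

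Next I would verify this unique point is superspecial. Because $L_{1}$ is a $\ZZ_{p}$-lattice in $C=N_{0}^{\tau=1}$, its base change $L_{1,W_{0}}$ is automatically $\tau$-stable, so $\tau(D)=D$. Unwinding $\tau=\Pi V^{-1}$ and writing the associated Dieudonn\'e lattice as $M=M_{0}\oplus M_{1}$ with $M_{0}=D$, this equality amounts to $VM_{0}=\Pi M_{0}$ inside $M_{1}$. Combining this with the alternating pairing identifying $M_{1}$ with a twist of the dual of $M_{0}$ and the commutative $\Pi$--$V$ diagram from Section \ref{quaternionic-module}, one also obtains $VM_{1}=\Pi M_{1}$, which is precisely the superspecial condition singled out in the local-model discussion just before Lemma \ref{singularity}. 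Hence $F=V$ on $M$.

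For the second assertion, that $\calM_{\{1\}}(\FF)$ exhausts the superspecial locus, I would run the translation in the reverse direction. If $M$ is superspecial then $F=V$, which forces $\Pi M_{0}=VM_{0}$, equivalently $\tau(D)=D$. Setting $L_{1}:=D^{\tau=1}$ produces a $\ZZ_{p}$-lattice in $C$, and the conditions $pD^{\vee}\subset^{2} D\subset^{2} D^{\vee}$ inherited from $D\in\calM(\FF)$ descend to $pL_{1}^{\vee}\subset^{2} L_{1}\subset^{2} L_{1}^{\vee}$, showing that $L_{1}$ is a vertex lattice of type $1$. By construction $D=L_{1,W_{0}}\in\calM_{L_{1}}(\FF)\subset\calM_{\{1\}}(\FF)$. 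The main obstacle will be the Dieudonn\'e-theoretic bookkeeping in the middle step: verifying cleanly that $\tau$-stability of $D=M_{0}$ is equivalent to the superspecial condition on the full module $M=M_{0}\oplus M_{1}$, which requires juggling the $\sigma^{-1}$-semilinearity of $V$, the alternating pairing between $M_{0}$ and $M_{1}$, and the interaction of $\Pi$ with both, essentially reprising the local-model computation preceding Lemma \ref{singularity}.
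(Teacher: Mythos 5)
Your first two assertions track the paper's own proof faithfully: the colength count forcing $D=L_{1,W_{0}}$ is exactly the paper's argument, and the forward direction ($\tau$-stability $\Rightarrow$ superspecial) is a slightly more explicit rendering of the paper's terse chain $\tau(L_{1,W_0})=L_{1,W_0}\Rightarrow VD=\Pi D\Rightarrow D$ superspecial $\Rightarrow M$ superspecial. Your route via $VM_{0}=\Pi M_{0}\Rightarrow VM_{1}=\Pi M_{1}$ by duality, then invoking the local-model criterion, is sound and fills in some of the bookkeeping the paper leaves implicit.

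The gap is in the converse direction, in the sentence ``If $M$ is superspecial then $F=V$, which forces $\Pi M_{0}=VM_{0}$.'' This does not follow. Unwinding $\tau=\Pi V^{-1}$ and using $\Pi^{2}=p$, one computes $F|_{N_{0}}=\Pi\tau$ and $V|_{N_{0}}=\Pi\tau^{-1}$, so the superspecial condition $FM_{0}=VM_{0}$ (equivalently $F^{2}M=pM$) is $\tau^{2}M_{0}=M_{0}$, whereas $\Pi M_{0}=VM_{0}$ is the strictly stronger condition $\tau M_{0}=M_{0}$. You are asserting, without argument, precisely the implication $\tau^{2}D=D\Rightarrow\tau D=D$. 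That is the real content: one can produce $D\in\calM(\FF)$ sitting between a $\tau$-stable vertex lattice of type $2$ and one of type $0$, with image in the two-dimensional quotient $\FF_{p^{2}}$-rational but not $\FF_{p}$-rational; such $D$ satisfies $\tau^{2}D=D$ (so $F^{2}M=pM$) but $\tau D\neq D$, hence lies outside every $\calM_{L_{1}}(\FF)$. The paper itself disposes of this direction with the single phrase ``It is also clear that all the superspecial points arise this way,'' so you are matching the source's level of detail, but the word ``forces'' papers over a step that genuinely needs an argument (or a restriction of the meaning of ``superspecial'' to the $\tau$-stable locus). At minimum, the equivalence you flag as ``the main obstacle'' is not an equivalence at the level of the raw condition $F^{2}M=pM$: $\tau$-stability of $D$ implies it, but not conversely, and the converse is exactly what your last paragraph needs.
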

\begin{proof}
Given $D\in \mathcal{M}_{L_{1}}(\FF)$, we have the following chain:
$$L_{1,W_{0}} \subset D\subset D^{\vee}\subset L^{\vee}_{1,W_{0}}.$$
This forces $L_{1,W_{0}} = D$. Since $\tau(L_{1, W_{0}})=L_{1, W_{0}}$, it follows that $VD=\Pi D$. As $\Pi^{2}=p$, $D$ is superspecial. Note that $M$ is superspecial if and only if $M_{0}=D$ is superspecial. The result follows. It is also clear that all the superspecial points arise this way.
\end{proof}

\begin{lemma}\label{L0_L2_intersection}
Let $L_{1}$ be a vertex lattice of type $1$. Then we can find a vertex lattice $L_{0}$ of type $0$ containing $L_{1}$ and a vertex lattice $L_{2}$ of type $2$ contained in $L_{1}$. 
\end{lemma}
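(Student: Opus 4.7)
The plan is to construct $L_0$ and $L_2$ directly by choosing isotropic lines in two naturally associated symplectic $\FF_p$-spaces of dimension $2$ attached to $L_1$.

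First I would set up the relevant quotients. Since $L_1$ is of type $1$, we have $pL_1^{\vee}\subset^{2} L_1\subset^{2} L_1^{\vee}$. The form on $C$ restricted to $L_1$ is $\ZZ_p$-valued, so reducing modulo $p$ produces an alternating $\FF_p$-valued form on $L_1/pL_1^{\vee}$ whose radical is precisely $pL_1^{\vee}$; hence $V_1':=L_1/pL_1^{\vee}$ is a $2$-dimensional symplectic $\FF_p$-space. Dually, the restriction of the form to $L_1^{\vee}$ takes values in $p^{-1}\ZZ_p$, and it descends to a perfect alternating pairing on $V_1:=L_1^{\vee}/L_1$ valued in $p^{-1}\ZZ_p/\ZZ_p\cong\FF_p$; so $V_1$ is also a $2$-dimensional symplectic $\FF_p$-space.

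To construct $L_0$, pick any line $\ell\subset V_1$ and let $L_0\subset L_1^{\vee}$ be its preimage, so $L_1\subset L_0\subset L_1^{\vee}$. Any line in a $2$-dimensional symplectic $\FF_p$-space is its own annihilator (it is isotropic for trivial reasons and the annihilator has the right codimension). Unwinding definitions, $x\in L_0^{\vee}$ iff $(x,L_1)\subset\ZZ_p$ and $(x,\ell)\subset\ZZ_p$, i.e.\ iff $x\in L_1^{\vee}$ and the image of $x$ in $V_1$ lies in $\ell^{\perp}=\ell$. Therefore $L_0^{\vee}=L_0$, which means $L_0$ is a vertex lattice of type $0$ containing $L_1$.

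To construct $L_2$, pick any line $\ell'\subset V_1'$ and let $L_2\subset L_1$ be its preimage, so $pL_1^{\vee}\subset L_2\subset L_1$. Dualizing gives $L_1^{\vee}\subset L_2^{\vee}\subset p^{-1}L_1$, and the pairing $L_1/pL_1^{\vee}\times p^{-1}L_1/L_1^{\vee}\to p^{-1}\ZZ_p/\ZZ_p$ identifies $L_2^{\vee}/L_1^{\vee}$ with the annihilator of $\ell'$ inside $p^{-1}L_1/L_1^{\vee}$. Multiplication by $p^{-1}$ identifies $V_1'$ with $p^{-1}L_1/L_1^{\vee}$ as symplectic $\FF_p$-spaces (up to rescaling the form), and under this identification $p^{-1}L_2/L_1^{\vee}$ corresponds to $\ell'$. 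Again using that a line in a $2$-dimensional symplectic $\FF_p$-space equals its own annihilator, we conclude $L_2^{\vee}=p^{-1}L_2$, i.e.\ $L_2=pL_2^{\vee}$, so $L_2$ is a vertex lattice of type $2$ contained in $L_1$. The only subtlety, which I would verify carefully, is the compatibility of the two induced alternating forms and the identification $V_1'\cong p^{-1}L_1/L_1^{\vee}$, but this is a direct check from the definitions of the duals and the form.
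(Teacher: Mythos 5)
Your proof is correct and follows exactly the same construction as the paper: pass to the two-dimensional symplectic $\FF_p$-spaces $L_1^{\vee}/L_1$ and $L_1/pL_1^{\vee}$, choose a line in each, and lift to lattices $L_0$ and $L_2$. The paper leaves the verification that $L_0^{\vee}=L_0$ and $pL_2^{\vee}=L_2$ as "one verifies immediately," while you carry out that check (correctly) via the self-annihilating property of lines in a two-dimensional symplectic space.
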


\begin{proof}
Since $L_{1}$ is a vertex lattice of type $1$, $pL_{1}^{\vee}\subset^{2}L_{1}\subset^{2} L_{1}^{\vee}$. Consider the symplectic space $L_{1}^{\vee}/L_{1}$ of dimension two. Choose any line $\bar{l}$ in $L^{\vee}_{1}/L_{1}$ and denote by $L_{0}$ its lift in $L^{\vee}_{1}$. One verifies immediately that $L_{0}$ is a vertex lattice type $0$. Similarly choose any line in $L_{1}/pL^{\vee}_{1}$ gives a vertex lattice $L_{2}$ of type $2$. 
\end{proof}

This lemma shows that any superspecial point is contained in the intersection $$\calM_{L_{0}}(\FF)\cap \calM_{L_{2}}(\FF)$$ for a vertex lattice $L_{0}$ of type $0$  and a vertex lattice $L_{2}$ of type $2$.  Finally we study the intersection of any two $2$-dimensional lattice strata. We have already seen that for a vertex lattice $L_{0}$ of type $0$ and a vertex lattice $L_{2}$ of type $2$, the intersection $\calM_{L_{0}}(\FF)\cap \calM_{L_{2}}(\FF)$ is $\PP^{1}(\FF)$. Let $L_{0}$ and $L^{\prime}_{0}$ be two vertex lattices of type $0$.  Similarly let $L_{2}$ and $L^{\prime}_{2}$ be two vertex lattices of type $2$.

\begin{lemma}
Suppose the intersection of $\calM_{L_{0}}(\FF)$ and $\calM_{L^{\prime}_{0}}(\FF)$ is nonempty, then it consists of a superspecial point corresponding to the vertex lattice $L_{0}\cap L^{\prime}_{0}$ of type $1$. Similarly if the intersection of $\calM_{L_{2}}(\FF)$ and $\calM_{L^{\prime}_{2}}(\FF)$ is nonempty, then it consists of a superspecial point corresponding to the vertex lattice $L_{2}+ L^{\prime}_{2}$ of type $1$.
\end{lemma}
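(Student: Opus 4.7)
The plan is to combine the tight colength information from the corollary to Proposition \ref{crucial_lemma} with the self-duality (respectively scaled self-duality) of type $0$ (respectively type $2$) vertex lattices. The first input is that, on the entire stratum $\calM_{L_{0}}(\FF)$ (not merely on the open piece $\calM^{\circ}_{L_{0}}(\FF)$) one has the uniform chain
\[
pL_{0,W_{0}}^{\vee}\subset^{1}pD^{\vee}\subset^{2}D\subset^{1}L_{0,W_{0}},
\]
and dually on the whole of $\calM_{L_{2}}(\FF)$ one has
\[
L_{2,W_{0}}\subset^{1}D\subset^{2}D^{\vee}\subset^{1}L_{2,W_{0}}^{\vee}.
\]
On the open strata this is the canonical chain furnished by Proposition \ref{crucial_lemma}; on the mixed pieces $\calM_{L_{0},\{1\}}$ and $\calM_{L_{0},\{2\}}$, it follows from the elementary fact that a type $1$ lattice $L_{1}\subset L_{0}$ has colength $1$ because $L_{0}/L_{1}$ is a Lagrangian line in the two-dimensional symplectic space $L_{1}^{\vee}/L_{1}$, and that a type $2$ lattice $L_{2}\subset L_{0}$ has colength $2$ because $L_{0}/L_{2}$ is a Lagrangian plane in $L_{2}^{\vee}/L_{2}\cong\FF_{p}^{4}$; the type $2$ side is analogous.

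Granting this, assume $D\in\calM_{L_{0}}(\FF)\cap\calM_{L_{0}'}(\FF)$ with $L_{0}\neq L_{0}'$. Then
\[
D\subset L_{0,W_{0}}\cap L_{0,W_{0}}'=(L_{0}\cap L_{0}')_{W_{0}}.
\]
Since $L_{0}$ and $L_{0}'$ are distinct self-dual lattices of the same covolume, neither contains the other, so $(L_{0}\cap L_{0}')_{W_{0}}\subsetneq L_{0,W_{0}}$. Combined with $D\subset^{1}L_{0,W_{0}}$, this forces $D=(L_{0}\cap L_{0}')_{W_{0}}$. Both $L_{0}$ and $L_{0}'$ are $\tau$-stable, hence so is $D$, and Proposition \ref{L1} makes $D$ superspecial. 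Setting $L_{1}=L_{0}\cap L_{0}'$ and using $L_{1}^{\vee}=L_{0}^{\vee}+L_{0}'^{\vee}=L_{0}+L_{0}'$, the defining relations $pD^{\vee}\subset^{2}D\subset^{2}D^{\vee}$ translate into $pL_{1}^{\vee}\subset^{2}L_{1}\subset^{2}L_{1}^{\vee}$, showing that $L_{1}$ is a vertex lattice of type $1$ and that $D$ is the unique superspecial point in $\calM_{L_{1}}(\FF)$.

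The second assertion is the exact Cartier dual of the first. If $D\in\calM_{L_{2}}(\FF)\cap\calM_{L_{2}'}(\FF)$ with $L_{2}\neq L_{2}'$, then $D\supset(L_{2}+L_{2}')_{W_{0}}$, and the equal covolume of $L_{2},L_{2}'$ forces $L_{2,W_{0}}\subsetneq(L_{2}+L_{2}')_{W_{0}}$. The identity $L_{2,W_{0}}\subset^{1}D$ then gives $D=(L_{2}+L_{2}')_{W_{0}}$, which is $\tau$-stable and therefore superspecial. Using $(L_{2}+L_{2}')^{\vee}=L_{2}^{\vee}\cap L_{2}'^{\vee}$, the relations $pD^{\vee}\subset^{2}D\subset^{2}D^{\vee}$ identify $L_{2}+L_{2}'$ as a vertex lattice of type $1$, completing the proof.

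The only mildly delicate step is checking the uniform colength identity $D\subset^{1}L_{0,W_{0}}$ on the whole of $\calM_{L_{0}}(\FF)$, and dually on $\calM_{L_{2}}(\FF)$, beyond the open strata where it comes directly from Proposition \ref{crucial_lemma}; this reduces to the Lagrangian dimension count recalled above. Everything else is pure lattice bookkeeping, and no substantive obstacle is expected.
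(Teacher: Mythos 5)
Your proof is correct and follows essentially the same route as the paper's: both arguments hinge on the uniform colength identity $D\subset^{1}L_{0,W_{0}}$ (resp.\ $L_{2,W_{0}}\subset^{1}D$), from which $D=L_{0,W_{0}}\cap L_{0,W_{0}}'$ (resp.\ $D=(L_{2}+L_{2}')_{W_{0}}$) is forced, and then Proposition~\ref{L1} identifies the point as superspecial of type $1$.

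One small remark on the ``mildly delicate step'' you flag: the colength-one inclusion $D\subset^{1}L_{0,W_{0}}$ holds uniformly on $\calM_{L_{0}}(\FF)$ by a direct index count that does not require either Proposition~\ref{crucial_lemma} or a case split along the Ekedahl--Oort strata. Since $L_{0,W_{0}}=L_{0,W_{0}}^{\vee}$, the inclusion $D\subset L_{0,W_{0}}$ dualizes to $L_{0,W_{0}}\subset D^{\vee}$, giving the chain $pD^{\vee}\subset pL_{0,W_{0}}\subset D\subset L_{0,W_{0}}\subset D^{\vee}$; using $D\subset^{2}D^{\vee}$ and the self-duality of $L_{0,W_{0}}$ rules out colength $0$ and colength $2$. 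In fact this is exactly the chain written down at the start of the proof of Proposition~\ref{DL0}. Your Lagrangian-subspace justification on the mixed pieces is not wrong, but it is indirect: on $\calM_{L_{0},\{2\}}$ the fact that $L_{2}\subset^{2}L_{0}$ only bounds the colength of $D$ in $L_{0,W_{0}}$ by $2$, and one still needs the index condition $pD^{\vee}\subset^{2}D\subset^{2}D^{\vee}$ on $D$ itself to pin it to $1$. The global index count supersedes the case analysis and keeps the argument aligned with the paper's.
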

\begin{proof}
Suppose $M\in \calM_{L_{0}}\cap \calM_{L^{\prime}_{0}}$. Then $pL^{\vee}_{0,W_{0}}\subset^{1} pM^{\vee}\subset^{2} M\subset^{1} L_{0, W_{0}}$ and $p(L^{\prime}_{0, W_{0}})^{\vee}\subset^{1} pM^{\vee}\subset^{2} M\subset^{1} L^{\prime}_{0, W_{0}}$, this forces $M=L_{0,W_{0}}\cap L^{\prime}_{0,W_{0}}$. Moreover $L_{0}\cap L^{\prime}_{0}$ is a vertex lattice of type $1$. The statement for $L_{2}, L^{\prime}_{2}$ is proved in a similar way.
\end{proof}

\section{The isogeny trick}\label{Isogeny-trick} For each vertex lattice $L$, we now would like to equip the set $\calM_{L}(\FF)$ with a scheme structure. If $L_{0}$ is a vertex lattice of type $0$, we define $L^{+}_{0}= L_{0,W_{0}}\oplus \Pi^{-1} L_{0, W_{0}}$ and $L_{0}^{-}=(L_{0}^{+})^{\perp}$ where $\perp$ is the integral dual taken with respect to the form $(\cdot,\cdot)$ inside of $N$.  Notice by an easy computation $L_{0,W_{0}}^{\perp}=\Pi L_{0, W_{0}}^{\vee}=\Pi L_{0, W_{0}}$ and $(\Pi^{-1} L_{0,W_{0}})^{\perp}= pL^{\vee}_{0,W_{0}}=pL_{0,W_{0}}$.

\begin{lemma}
The lattices $L_{0}^{+}$ and $L_{0}^{-}$ give rise to  $p$-divisible groups $\XX_{L_{0}^{+}}$ and $\XX_{L_{0}^{-}}$ equipped with two quasi-isogenies $\rho_{L^{+}_{0}}:\XX_{L^{+}_{0}}\rightarrow \XX$ and $\rho_{L^{-}_{0}}:\XX_{L^{-}_{0}}\rightarrow \XX$. Moreover there is a commutative diagram:
$$
\begin{tikzcd}
 \XX_{L_{0}^{+}} \arrow[r,  "\sim"] \arrow[d,  "\rho_{L_{0}^{+}}"]
    & \XX^{\vee}_{L_{0}^{-}} \\
   \XX \arrow[r,  "\lambda_{\XX}"] & \XX^{\vee}  \arrow[u,  "\rho^{\vee}_{L_{0}^{-}}"] .
\end{tikzcd}$$ 
\end{lemma}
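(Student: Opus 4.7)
The plan is to first verify that $L_{0}^{+}$ and $L_{0}^{-}$ are honest Dieudonn\'{e} lattices in $N$, so that the construction of $\XX_{L_{0}^{\pm}}$ and $\rho_{L_{0}^{\pm}}$ then follows from covariant Dieudonn\'{e} theory. For $L_{0}^{+}=L_{0,W_{0}}\oplus\Pi^{-1}L_{0,W_{0}}$, the key input is that $L_{0}$ is a vertex lattice in $C=N_{0}^{\tau=1}$, so $L_{0,W_{0}}$ is $\tau$-stable with $\tau=\Pi V^{-1}$. Since $\Pi$ and $V$ commute and both swap $N_{0}$ with $N_{1}$, the relation $\tau L_{0,W_{0}}=L_{0,W_{0}}$ rewrites as $V L_{0,W_{0}}=\Pi L_{0,W_{0}}$ and $V(\Pi^{-1} L_{0,W_{0}})=L_{0,W_{0}}$; combined with the trivial inclusion $\Pi L_{0,W_{0}}=p\Pi^{-1}L_{0,W_{0}}\subset\Pi^{-1}L_{0,W_{0}}$, this gives $V L_{0}^{+}\subset L_{0}^{+}$. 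An analogous computation using $F=pV^{-1}$ yields $F L_{0}^{+}\subset L_{0}^{+}$, and then $pL_{0}^{+}=V(FL_{0}^{+})\subset V L_{0}^{+}$ is automatic. For $L_{0}^{-}=(L_{0}^{+})^{\perp}$, I would use the adjunction $(Fx,y)=(x,Vy)^{\sigma}$ to conclude that both $F$ and $V$ preserve $L_{0}^{-}$ directly.

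Once this is in place, covariant Dieudonn\'{e} theory produces $p$-divisible groups $\XX_{L_{0}^{\pm}}$ over $\FF$ with common isocrystal $N$, and the identity on $N$ induces the quasi-isogenies $\rho_{L_{0}^{\pm}}:\XX_{L_{0}^{\pm}}\to\XX$. For the commutative square, the principal polarization $\lambda_{\XX}$ corresponds under Dieudonn\'{e} theory to the self-duality $M(\XX)=M(\XX)^{\perp}$ induced by $(\cdot,\cdot)$, and likewise $M(\XX_{L_{0}^{-}}^{\vee})$ is identified with $(L_{0}^{-})^{\perp}$; by nondegeneracy of $(\cdot,\cdot)$ we have $(L_{0}^{-})^{\perp}=L_{0}^{+}$. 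This gives the horizontal isomorphism $\XX_{L_{0}^{+}}\xrightarrow{\sim}\XX_{L_{0}^{-}}^{\vee}$, and commutativity of the square is tautological in $N$: the composition $L_{0}^{+}\hookrightarrow M(\XX)\xrightarrow{\sim} M(\XX)^{\perp}\hookrightarrow (L_{0}^{-})^{\perp}=L_{0}^{+}$ is the identity.

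The main obstacle is essentially bookkeeping: one needs to track the swap of $N_{0}$ and $N_{1}$ under $\Pi$ and $V$ so that the stability computations for $L_{0}^{+}$ are carried out correctly, and pin down the $\QQ_{p}^{\times}$-scalar $c(\rho)$ built into the polarization compatibility in the moduli problem $\calN$. Neither is deep, but both require care with the conventions of Section~\ref{quaternionic-module}.
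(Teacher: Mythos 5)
Your proof is correct and follows essentially the same route as the paper: reduce the Dieudonn\'{e} lattice condition for $L_{0}^{+}$ to the observation that $\tau$-stability of $L_{0,W_{0}}$ forces $V L_{0,W_{0}}=\Pi L_{0,W_{0}}$ (and hence $VL_{0}^{+}\subset L_{0}^{+}$ by the same $N_{0}/N_{1}$ bookkeeping), handle $L_{0}^{-}$ by duality, and read the commutative square off the identifications of all four Dieudonn\'{e} modules as lattices inside the single isocrystal $N$. The only minor blemish is that $L_{0}^{+}\hookrightarrow M(\XX)$ need not be an actual inclusion (the $\rho$'s are quasi-isogenies), but the intended argument --- that every arrow is the identity on $N$ after inverting $p$ --- is exactly what the paper uses.
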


\begin{proof}
To see $L_{0}^{+}$ defines a $p$-divisible groups, we need to verify that $p L_{0}^{+} \subset V L_{0}^{+} \subset L_{0}^{+}$. This is equivalent to $p\Pi^{-1}L_{0,W_{0}}\subset V L_{0,W_{0}}\subset \Pi^{-1} L_{0,W_{0}}$ and $pL_{0,W_{0}}\subset V\Pi^{-1} L_{0, W_{0}}\subset  L_{0, W_{0}}$. Notice that $\Pi=V=F$ on $L_{0, W_{0}}$ and the two conditions above are easily verified. Therefore $L^{+}_{0}$ gives a $p$-divisible group $\XX_{L^{+}_{0}}$. The dual $L^{-}_{0}$ of $L^{+}_{0}$ gives the dual $p$-divisible group $\XX_{L^{-}_{0}}\cong \XX^{\vee}_{L^{+}_{0}}$. The maps $\rho_{L^{+}_{0}}$ and $\rho_{L^{-}_{0}}$ are given by the inclusions of $L^{+}_{0}\subset N$ and $L^{-}_{0}\subset N$.
\end{proof}

Let  $(X, \iota_{X},  \lambda_{X}, \rho_{X})\in \calM(R)$  for a $\FF$-algebra $R$, consider the quasi-isogenies defined by 
\begin{equation}\label{rho0+}
\rho_{X,L_{0}^{+}} :X \xrightarrow{\rho_{X}} \XX_{R} \xrightarrow{\rho^{-1}_{L_{0}^{+}}} \XX_{L_{0}^{+}, R} 
 \end{equation}
and
\begin{equation}\label{rho0-}
\rho_{X,L_{0}^{-}} :\XX_{L_{0}^{-}, R} \xrightarrow{\rho_{L_{0}^{-}}} \XX_{R} \xrightarrow{\rho_{X}^{-1}} X.
 \end{equation}

 \begin{definition} We define lattice stratum $\calM_{L_{0}}$ associated to $L_{0}$ as the subscheme of $\calM$ consisting of those points that $\rho_{X,L_{0}^{+}}$ is an isogeny. This is equivalent to $\rho_{X,L_{0}^{-}}$ is an isogeny.
\end{definition}

\begin{proposition}\label{M0proj}
The subscheme $\calM_{L_{0}}$ is a closed subscheme of $\calM$. Moreover $\calM_{L_{0}}$ is projective.
\end{proposition}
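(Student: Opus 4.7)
The plan splits into two independent claims: closedness of $\calM_{L_{0}}$ inside $\calM$, and projectivity of $\calM_{L_{0}}$ on its own.

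For closedness, the right input is the standard representability lemma of Rapoport--Zink (see \cite[Proposition 2.9]{RZ-Aoms}): for any quasi-isogeny $\rho$ of $p$-divisible groups over a base $S\in\Nilp$, the locus of $s\in S$ over which $\rho$ lifts to a genuine morphism of $p$-divisible groups is representable by a closed subscheme of $S$. Applied to the universal version of the quasi-isogeny $\rho_{X,L_{0}^{+}}$ from \eqref{rho0+}, this shows immediately that the subfunctor of $\calM$ defined by requiring $\rho_{X,L_{0}^{+}}$ to be an isogeny is closed. The equivalence with the condition that $\rho_{X,L_{0}^{-}}$ be an isogeny follows by dualizing and invoking the principal polarization $\lambda_{X}$: dualizing $\rho_{X,L_{0}^{+}}$ and using the isomorphism $\XX_{L_{0}^{+}}^{\vee}\simeq \XX_{L_{0}^{-}}$ supplied by the preceding lemma, one recovers $\rho_{X,L_{0}^{-}}$ after the identification $X^{\vee}\simeq X$ given by $\lambda_{X}$.

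For projectivity, the strategy is to realize $\calM_{L_{0}}$ as a closed subscheme of a projective scheme, specifically a Quot-type parameter space of finite flat subgroup schemes of a fixed $p$-divisible group. On $\calM_{L_{0}}$ the composite $\rho_{X,L_{0}^{+}}\circ\rho_{X,L_{0}^{-}}$ is the base change to $R$ of the single fixed isogeny $\XX_{L_{0}^{-}}\to \XX_{L_{0}^{+}}$ determined by $L_{0}^{-}\subset L_{0}^{+}$; call its kernel $K$, a finite flat group scheme over $\FF$ of order $[L_{0}^{+}:L_{0}^{-}]$. Each $R$-valued point $(X,\iota_{X},\lambda_{X},\rho_{X})$ of $\calM_{L_{0}}$ then determines a finite flat subgroup scheme $K_{X}=\ker(\XX_{L_{0}^{-},R}\to X)\subset K_{R}$, from which $X$ is reconstructed as the quotient $\XX_{L_{0}^{-},R}/K_{X}$. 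This association defines a natural morphism from $\calM_{L_{0}}$ to the Quot scheme of finite flat subgroup schemes of $K$, and the latter is projective over $\FF$.

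The main point to verify, and the place I expect the technical work to be, is that this morphism is a closed immersion. Concretely, I would check that the PEL datum $(\iota_{X},\lambda_{X},\rho_{X})$ is uniquely determined by $K_{X}$ together with the fixed data on $\XX_{L_{0}^{-}}$, and that the compatibility requirements (stability of $K_{X}$ under $\calO_{B_{p}}$, isotropy with respect to the polarization induced by $\lambda_{\XX}$, and the Kottwitz condition on $\Lie(X)$) are all closed conditions on the Quot scheme; the first two are manifestly closed, and the Kottwitz determinant condition is closed by the standard argument. Granting these routine checks, $\calM_{L_{0}}$ is identified with a closed subscheme of a projective $\FF$-scheme, hence is itself projective.
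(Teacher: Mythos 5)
Your argument matches the paper's: closedness comes from \cite[Proposition 2.9]{RZ-Aoms}, and projectivity comes from the boundedness furnished by sandwiching $X$ between the fixed $p$-divisible groups, $\XX_{L_0^-,R}\to X\to\XX_{L_0^+,R}$. The only difference is cosmetic: the paper delegates the projectivity step to \cite[Corollary 2.29]{RZ-Aoms}, whereas you unwind the proof of that corollary by exhibiting the closed embedding into the (projective) Quot scheme of finite flat subgroup schemes of $\ker(\XX_{L_0^-}\to\XX_{L_0^+})$.
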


\begin{proof}
First of all, $\calM_{L_{0}}$ is a closed subscheme by \cite[Proposition 2.9]{RZ-Aoms}. Moreover $\calM_{L_{0}}$ is bounded in the sense of \cite[2.30]{RZ-Aoms}. This follows from the fact that we have constructed the two isogenies
$$\XX_{L_{0}^{-}, R} \xrightarrow{\rho_{X,L_{0}^{-}}} X  \xrightarrow{\rho_{X,L_{0}^{+}}}\XX_{L_{0}^{+}, R}.$$
Thus $\calM_{L_{0}}$ is closed subscheme of a projective scheme by \cite[Corollary 2.29]{RZ-Aoms}. Thus it is projective itself.
\end{proof}

\begin{lemma}\label{M0set}
The set $\calM_{L_{0}}(\FF)$ in Definition \ref{stratum-set} is precisely the set of $\FF$-points of $\calM_{L_{0}}$.
\end{lemma}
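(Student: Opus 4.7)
The plan is to apply covariant Dieudonn\'{e} theory to translate the scheme-theoretic defining condition of $\calM_{L_{0}}$ — that the quasi-isogeny $\rho_{X,L_{0}^{+}}$ is an actual isogeny — into a lattice containment in $N$, and then use the $\calO_{B_{p}}$-grading on $N$ to reduce it to the condition $D \subset L_{0,W_{0}}$ appearing in Definition \ref{stratum-set}.

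First I would invoke the standard fact that a quasi-isogeny $\rho: X \to Y$ of $p$-divisible groups over $\FF$ is an actual isogeny if and only if its induced $K_{0}$-linear map on rational Dieudonn\'{e} modules sends the Dieudonn\'{e} lattice $M(X)$ into $M(Y)$. Using $\rho_{X}$ and $\rho_{L_{0}^{+}}$ to identify the rational Dieudonn\'{e} modules of $X$ and of $\XX_{L_{0}^{+}}$ with the ambient isocrystal $N$, the Dieudonn\'{e} lattices become $M$ and $L_{0}^{+}$ respectively, and the map induced by $\rho_{X,L_{0}^{+}} = \rho_{L_{0}^{+}}^{-1} \circ \rho_{X}$ becomes the identity on $N$. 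Hence $\rho_{X,L_{0}^{+}}$ is an isogeny if and only if $M \subset L_{0}^{+}$ inside $N$.

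Next I would exploit the $\calO_{B_{p}}$-grading. Since $\Pi$ swaps $N_{0}$ and $N_{1}$, the lattice $L_{0}^{+} = L_{0,W_{0}} \oplus \Pi^{-1} L_{0,W_{0}}$ respects the decomposition $N = N_{0} \oplus N_{1}$, with the two summands lying in $N_{0}$ and $N_{1}$ respectively. Therefore $M = M_{0} \oplus M_{1} \subset L_{0}^{+}$ is equivalent to the pair of containments $M_{0} \subset L_{0,W_{0}}$ and $M_{1} \subset \Pi^{-1} L_{0,W_{0}}$, the first of which is precisely $D \subset L_{0,W_{0}}$ under the bijection $D = M_{0}$ of Proposition \ref{set2}.

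The only remaining point is to show that the second containment is automatic, and this is immediate from the structural relation $\Pi M_{1} \subset M_{0}$ satisfied by any point of $\calN$: if $M_{0} \subset L_{0,W_{0}}$ then $\Pi M_{1} \subset L_{0,W_{0}}$, hence $M_{1} \subset \Pi^{-1} L_{0,W_{0}}$. I anticipate no substantial obstacle; the argument is essentially bookkeeping once the Dieudonn\'{e} dictionary has been set up, and the only genuinely used input beyond the dictionary is the compatibility of $\Pi$ with the grading on $M$.
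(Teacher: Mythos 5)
Your proof is correct and follows essentially the same route as the paper: translate the isogeny condition via the Dieudonn\'{e} dictionary to $M\subset L_{0}^{+}$, use the $\calO_{B_{p}}$-grading to split this into containments of $M_{0}$ and $M_{1}$, and observe that the $M_{1}$-containment follows from the $M_{0}$-containment. The only (minor) difference is in the last step, where you invoke the structural condition $\Pi M_{1}\subset M_{0}$ from \eqref{set1} directly, while the paper derives it from the duality relation $M_{1}=\Pi M_{0}^{\vee}$ together with $pM_{0}^{\vee}\subset M_{0}$; both are valid and your version is slightly more economical.
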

\begin{proof}
Giving a $\FF$-point of $\calM_{L_{0}}$ is equivalent to giving a Dieudonn\'{e} module $M\subset L^{+}_{0}$ but this is equivalent to $M_{0}\subset L_{0,W_{0}}$. Indeed, if $M\subset L^{+}_{0}$, then $M_{0}\subset L_{0,W_{0}}$ is obvious. Conversely if $M_{0}\subset L_{0, W_{0}}$, then $M_{1}= \Pi M^{\vee}_{0}\subset \Pi^{-1}M_{0}\subset \Pi^{-1}L_{0,W_{0}}$.
\end{proof}

If $L_{2,W_{0}}$ is a vertex lattice of type $2$, we define $L_{2}^{+}= L_{2, W_{0}}\oplus \Pi L_{2, W_{0}}$ and $L_{2}^{-}=(L_{2}^{+})^{\perp}$ where $\perp$ is taken with respect to the form $(\cdot,\cdot)$ inside of $N$.  The proof of the following statements are proved exactly the same way as in the $L_{0}$ case.

\begin{lemma}
The lattices $L_{2}^{+}$ and $L_{2}^{-}$ gives rise to  $p$-divisible groups $\XX_{L_{2}^{+}}$ and $\XX_{L_{2}^{-}}$ equipped with quasi-isogenies $\rho_{L^{+}_{2}}:\XX_{L^{+}_{2}}\rightarrow \XX$ and $\rho_{L^{-}_{2}}:\XX_{L^{-}_{2}}\rightarrow \XX$.
Moreover there is a commutative diagram:
$$
\begin{tikzcd}
 \XX_{L_{2}^{+}} \arrow[r,  "\sim"] \arrow[d,  "\rho_{L_{2}^{+}}"]
    & \XX^{\vee}_{L_{2}^{-}} \\
   \XX \arrow[r,  "\lambda_{\XX}"] & \XX^{\vee}  \arrow[u,  "\rho^{\vee}_{L_{2}^{-}}"] .
\end{tikzcd}$$ 
\end{lemma}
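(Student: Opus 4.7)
The statement is the exact parallel of the preceding lemma for vertex lattices of type $0$, with $L_{2}^{+}=L_{2,W_{0}}\oplus \Pi L_{2,W_{0}}$ playing the role of $L_{0,W_{0}}\oplus \Pi^{-1}L_{0,W_{0}}$.  I will follow the same three-step strategy: (i) check that $L_{2}^{+}$ is a Dieudonn\'{e} lattice in $N$; (ii) identify $L_{2}^{-}$ with the $(\cdot,\cdot)$-dual of $L_{2}^{+}$ so as to produce the isomorphism $\XX_{L_{2}^{+}}\simeq \XX^{\vee}_{L_{2}^{-}}$; (iii) read off the commutative diagram from the fact that everything in it descends from the single inclusion $L_{2}^{+}\subset N$.

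For step (i) the essential input is that $L_{2}$ is a vertex lattice of type $2$, so $L_{2,W_{0}}=pL_{2,W_{0}}^{\vee}$ and $\tau L_{2,W_{0}}=L_{2,W_{0}}$; combined with $\tau=\Pi V^{-1}$ this gives $\Pi L_{2,W_{0}}=VL_{2,W_{0}}$ inside $N_{1}$.  Decomposing along $N=N_{0}\oplus N_{1}$ one then computes $VL_{2}^{+}=\Pi L_{2,W_{0}}\oplus pL_{2,W_{0}}$, which sits between $pL_{2}^{+}=pL_{2,W_{0}}\oplus p\Pi L_{2,W_{0}}$ and $L_{2}^{+}$ by inspection.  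Hence $L_{2}^{+}$ defines a $p$-divisible group $\XX_{L_{2}^{+}}$, and the inclusion $L_{2}^{+}\subset N$ gives the quasi-isogeny $\rho_{L_{2}^{+}}$.  The same stability check (after the dualization below) will yield $\XX_{L_{2}^{-}}$ and $\rho_{L_{2}^{-}}$.

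For step (ii) I compute $L_{2}^{-}=(L_{2}^{+})^{\perp}$ using the fact that the alternating form $(\cdot,\cdot)$ vanishes on each $N_{i}\times N_{i}$ and pairs $N_{0}$ with $N_{1}$ via $(x,\Pi y)=(x,y)_{0}$.  A direct calculation entirely parallel to the one the author performs for $L_{0}^{-}$ (using $\Pi^{2}=p$, $\Pi^{*}=\Pi$, and $L_{2,W_{0}}^{\vee}=p^{-1}L_{2,W_{0}}$) yields $L_{2}^{-}=p^{-1}L_{2,W_{0}}\oplus p^{-1}\Pi L_{2,W_{0}}$.  By the standard compatibility of covariant Dieudonn\'{e} theory with Serre duality, the dual lattice $L_{2}^{-}$ represents $\XX^{\vee}_{L_{2}^{+}}$, which gives the top horizontal arrow of the diagram.

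For step (iii), the polarization $\lambda_{\XX}$ on $\XX$ is by construction the quasi-isogeny $\XX\to \XX^{\vee}$ induced by the form $(\cdot,\cdot)$ on $N$; both $\rho_{L_{2}^{+}}$ and $\rho_{L_{2}^{-}}^{\vee}$ come from the single lattice inclusion $L_{2}^{+}\subset N$ (the latter via $(L_{2}^{-})^{\vee}=(L_{2}^{+})^{\perp\perp}=L_{2}^{+}$), so the diagram commutes by transport of structure.  The only step that needs genuine care is the dualization computation in (ii): the shift between $(\cdot,\cdot)$ and $(\cdot,\cdot)_{0}$ by a factor of $\Pi$, together with $\Pi^{2}=p$, is what makes the indices come out correctly, and it is here that the type-$2$ relation $L_{2,W_{0}}=pL_{2,W_{0}}^{\vee}$ intervenes (replacing the self-duality $L_{0,W_{0}}=L_{0,W_{0}}^{\vee}$ used in the type-$0$ case).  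All remaining assertions are formal.
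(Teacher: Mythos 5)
Your proof is correct and follows exactly the template the paper prescribes (the paper merely says the proof is "exactly the same way as in the $L_{0}$ case" and gives no further details). You correctly identify the type-$2$ analogue of the key facts — $\Pi L_{2,W_0}=VL_{2,W_0}$ from $\tau$-stability, $L_{2,W_0}^{\vee}=p^{-1}L_{2,W_0}$ replacing self-duality, and $L_2^{-}=p^{-1}L_2^{+}$ — so the stability check and the dualization both go through; this is what the paper intends.
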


Let  $(X, \iota_{X},  \lambda_{X}, \rho_{X})$  for a $\FF$-algebra $R$, consider the quasi-isogeny defined by 
\begin{equation}\label{rho2+}
\rho_{X,L_{2}^{+}} : \XX_{L_{2}^{+}, R}  \xrightarrow{\rho_{L_{2}^{+}}}\XX_{R} \xrightarrow{\rho^{-1}_{X}} X,
 \end{equation}
 and
 \begin{equation}\label{rho2-}
\rho_{X,L_{2}^{-}} : X \xrightarrow{\rho_{X}} \XX_{R} \xrightarrow{\rho^{-1}_{L^{-}_{2}}} \XX_{L_{2}^{-}, R} .
 \end{equation}
\begin{definition}
We define the lattice stratum $\calM_{L_{2}}$ associated to $L_{2}$ as the subscheme of $\calM$ consisting of those points that $\rho_{X,L_{2}^{+}}$ is an isogeny or equivalently $\rho_{X,L_{2}^{-}}$ is an isogeny.
\end{definition}
\begin{proposition}
The subscheme $\calM_{L_{2}}$ is a closed subscheme of $\calM$. Moreover $\calM_{L_{2}}$ is projective. The set $\calM_{L_{2}}(\FF)$ in Definition \ref{stratum-set} is precisely set of $\FF$-points of $\calM_{L_{2}}$.
\end{proposition}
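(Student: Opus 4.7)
The plan is to mirror the proof of Proposition \ref{M0proj} and Lemma \ref{M0set} verbatim, with the pair $(L_0^+, L_0^-)$ replaced by $(L_2^+, L_2^-)$ and the roles of the two quasi-isogenies \eqref{rho2+} and \eqref{rho2-} swapped in direction relative to the type-$0$ case. Because the set-up of $L_2^\pm$ and the commutative diagram relating $\XX_{L_2^+}$ with $\XX_{L_2^-}^\vee$ is formally identical to the type-$0$ set-up, the same general arguments apply without modification.

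Concretely, closedness of $\calM_{L_2}\subset\calM$ will follow from \cite[Proposition 2.9]{RZ-Aoms} applied to the locus where $\rho_{X,L_2^+}$ of \eqref{rho2+} is an isogeny. The equivalence with $\rho_{X,L_2^-}$ being an isogeny, which is what makes the condition symmetric in $L_2^\pm$, is a direct consequence of the commutative diagram of the previous lemma combined with the principal polarization $\lambda_X$. For projectivity, the two isogenies
\[
\XX_{L_2^+, R}\xrightarrow{\,\rho_{X,L_2^+}\,} X \xrightarrow{\,\rho_{X,L_2^-}\,} \XX_{L_2^-, R}
\]
exhibit $X$ as sandwiched between fixed $p$-divisible groups, so the quasi-isogeny $\rho_X$ is bounded in the sense of \cite[2.30]{RZ-Aoms}. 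By \cite[Corollary 2.29]{RZ-Aoms}, $\calM_{L_2}$ is then a closed subscheme of a projective scheme, hence projective.

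For the identification of $\FF$-points, an $\FF$-point of $\calM_{L_2}$ corresponds to a Dieudonné lattice $M=M_0\oplus M_1$ in $N$ with $L_2^+ = L_{2,W_0}\oplus \Pi L_{2,W_0}\subset M$. Since $\Pi$ swaps the two $\calO_{B_p}$-isotypic components, this decomposes as the two conditions $L_{2,W_0}\subset M_0$ and $\Pi L_{2,W_0}\subset M_1$. The first is precisely the defining condition in Definition \ref{stratum-set}; the second is automatic, because the moduli-theoretic condition $\Pi M_0\subset M_1$ (recalled in the discussion preceding Proposition \ref{set2}) gives $\Pi L_{2,W_0}\subset \Pi M_0\subset M_1$. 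I do not expect any real obstacle here: the only point requiring a moment of care is checking the $\rho_{X,L_2^+}\Leftrightarrow \rho_{X,L_2^-}$ equivalence, so that both the closed-subscheme description and the lattice-theoretic characterization are symmetric in the two isogenies.
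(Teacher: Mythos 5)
Your proposal is correct and takes the same route as the paper: the paper's proof of this proposition is a one-line assertion that it is established exactly as in Proposition \ref{M0proj} and Lemma \ref{M0set}, and your argument spells out precisely that mirroring (closedness via \cite[Proposition 2.9]{RZ-Aoms}, boundedness and projectivity via the sandwich of isogenies and \cite[Corollary 2.29]{RZ-Aoms}, and the lattice-theoretic identification on $\FF$-points using the decomposition $L_2^+=L_{2,W_0}\oplus\Pi L_{2,W_0}$). The observation that $\Pi L_{2,W_0}\subset M_1$ follows from $\Pi M_0\subset M_1$ is a slightly different phrasing of the converse step than the one the paper used in the $L_0$ case (which went through $M_1=\Pi M_0^\vee$), but it is equally valid.
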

\begin{proof}
This is proved in the exactly same way as in Proposition \ref{M0proj} and Lemma \ref{M0set}.
\end{proof}

\subsection{The isomorphism between $\calM_{L}$ and $Y_{L}$} Let $L_{0}$ be a vertex lattice of type $0$ and $L_{2}$ a vertex lattice of type $2$. The goal of this subsection is to define a map $f_{i}: \calM_{i}\rightarrow Y_{L_{i}} $ for $i=0,2$ and show that it is an isomorphism.

We begin with $L_{0}$. Given a point $(X, \iota_{X},  \lambda_{X}, \rho_{X})\in \calM_{L_{0}}(R)$ for a $\FF$-algebra $R$. Consider the isogenies $$\rho_{L_{0}}:\XX_{L_{0}^{-}, R} \xrightarrow{\rho_{X,L_{0}^{-}}} X  \xrightarrow{\rho_{X,L_{0}^{+}}}\XX_{L_{0}^{+}, R}.$$
The kernel of $\rho_{L_{0}}$ is the quotient $L^{+}_{0, R}/L^{-}_{0, R}$ which we compute to be $L_{0,R}/pL^{\vee}_{0,R}\oplus \Pi^{-1}L_{0,R}/\Pi L^{\vee}_{0,R}$. The kernel of $\rho_{X,L^{-}_{0}}$ is a direct summand of $L_{0, R}/pL^{\vee}_{0, R}\oplus \Pi^{-1}L_{0, R}/\Pi L^{\vee}_{0, R}$ which we compute to be $M_{0}/pL^{\vee}_{0, R}\oplus\Pi M^{\vee}_{0}/\Pi L^{\vee}_{0, R}$ where $M=M_{0}\oplus M_{1}$ is the covariant Dieudonn\'{e} crystal $\DD(X)(R)$ of $X$ \cite{BBM82}. Sending $(X, \iota_{X},  \lambda_{X}, \rho_{X})\in \calM_{L_{0}}(R)$ to $M_{0}/ pL^{\vee}_{0, R}\subset L_{0,R}/pL^{\vee}_{0,R}$ gives a map from $\calM_{L_{0}}$ to $Y_{L_{0}}$. We denote this map by $f_{0}$.

We consider a vertex lattice $L_{2}$ of type $2$. Given a point $(X, \iota_{X},  \lambda_{X}, \rho_{X})\in \calM_{L_{2}}(R)$ for a $\FF$-algebra $R$. Consider the isogenies $$\rho_{L_{2}}:\XX_{L_{2}^{+}, R} \xrightarrow{\rho_{X,L_{2}^{+}}} X  \xrightarrow{\rho_{X,L_{2}^{-}}}\XX_{L_{2}^{-}, R}.$$
The kernel of $\rho_{L_{0}}$ is the quotient $L^{-}_{2, R}/L^{+}_{2, R}$ which we compute to be $L^{\vee}_{2,R}/L_{2,R}\oplus \Pi L^{\vee}_{2,R}/\Pi L_{2,R}$. The kernel of $\rho_{X,L^{-}_{0}}$ is a direct summand of $L^{\vee}_{2,R}/L_{2,R}\oplus \Pi L^{\vee}_{2,R}/\Pi L_{2,R}$ which we compute to be $M_{0}/L_{2, R}\oplus\Pi M^{\vee}_{0}/\Pi L_{2, R}$ where $M=M_{0}\oplus M_{1}$ is the covariant Dieudonn\'{e} crystal $\DD(X)(R)$ of $X$. Sending $(X, \iota_{X},  \lambda_{X}, \rho_{X})\in \calM_{L_{2}}(R)$ to $M_{0}/ L_{2, R}$ gives a map from $\calM_{L_{2}}$ to $Y_{L_{2}}$. We denote this map by $f_{2}$.

\begin{proposition}\label{isom0}
The maps $f_{i}: \calM_{L_{i}}\rightarrow Y_{L_{i}}$  are isomorphisms for $i=0,2$. 
\end{proposition}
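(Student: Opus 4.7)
The plan is to construct an explicit inverse $g_i \colon Y_{L_i} \to \calM_{L_i}$ for each $i \in \{0, 2\}$ and verify that $f_i \circ g_i = \mathrm{id}$ and $g_i \circ f_i = \mathrm{id}$. The bijection on $\FF$-points is already supplied by Propositions \ref{DL0} and \ref{DL2}, so the content is to promote this to an isomorphism of schemes via a universal construction.

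I will focus on $i = 0$; the case $i = 2$ is parallel, with $L_{2, W_0}^\vee$ playing the role of $L_{0, W_0}$. Over an $\FF$-algebra $R$, an $R$-point of $Y_{L_0}$ is a rank $3$ local direct summand $U \subset V(L_0) \otimes_\FF R$ satisfying $U^\perp \subset U$ together with the Frobenius conditions defining $Y^{(-)}_{V(L_0)}$. The defining inequalities for points of $\calM_{L_0}$ in Proposition \ref{set2} translate exactly into the conditions on $U$: a Dieudonn\'e lattice $\widetilde M = \widetilde M_0 \oplus \widetilde M_1 \subset L_0^+ \otimes R$ with $\widetilde M_0$ the preimage of $U$ under $L_{0,W_0} \otimes R \twoheadrightarrow V(L_0) \otimes R$ and $\widetilde M_1 = \Pi \widetilde M_0^\vee$ satisfies the relative positions, isotropy, and Pappas/Kottwitz conditions precisely when $U$ lies in $Y_{L_0}$. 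Feeding the resulting Hodge filtration into Grothendieck--Messing deformation theory \cite{BBM82} then produces a $p$-divisible group $X$ over $R$ carrying the induced $\calO_{B_p}$-action $\iota_X$ (inherited from the $\calO_{B_p}$-structure on $L_0^+$), principal polarization $\lambda_X$ (from the form on $L_0^+$), and quasi-isogeny $\rho_X \colon X_R \to \XX_R$ (from the inclusion $\widetilde M \hookrightarrow L_0^+ \otimes R$). This defines $g_0$.

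The verifications $f_0 \circ g_0 = \mathrm{id}$ and $g_0 \circ f_0 = \mathrm{id}$ then amount to unwinding the definitions of the two constructions: the first recovers $U$ from $\widetilde M_0 / pL_{0, R}^\vee$ tautologically, while the second follows because Grothendieck--Messing recovers the original $p$-divisible group from its crystalline Dieudonn\'e module together with its Hodge filtration. The main obstacle is to justify rigorously that the data constructed from a flag on a general (non-perfect) $\FF$-algebra $R$ yield a flat family of $p$-divisible groups with all required PEL-data, rather than merely an $\FF$-point-by-$\FF$-point assignment. The cleanest way to handle this is via the local model diagram \cite[3.29]{RZ-Aoms} together with Lemma \ref{singularity}: after base change to $W_0$ the local model for $\calN$ is identified with the paramodular local model ${\bf M}^{\loc}_{\{1\}, W_0}$, and the lattice stratum $\calM_{L_i}$ maps onto exactly the same Grassmannian locus that the flag conditions carve out for $Y_{L_i}$. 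Under this identification $f_i$ becomes the canonical isomorphism furnished by the local model diagram, and is therefore an isomorphism of schemes.
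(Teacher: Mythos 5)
Your strategy of building an explicit inverse $g_i$ is much heavier than what the statement requires, and both of the tools you lean on to make it rigorous fail for concrete reasons.

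Grothendieck--Messing is the wrong device here. It classifies deformations of a $p$-divisible group along a nilpotent PD-thickening $\mathrm{Spec}(R/I)\hookrightarrow\mathrm{Spec}(R)$ given a base point $X_0$ over $R/I$; it does not manufacture a $p$-divisible group over an arbitrary $\FF$-algebra $R$ from a filtered lattice. Since $Y_{L_0}$ is reduced there is not even an interesting nilpotent thickening to induct along, and the ``preimage of $U$ in $L_{0,W_0}\otimes R$'' you write down lives over the wrong ring (one needs $W(R)$-lattices, not $W_0\otimes_\FF R$-modules). What actually turns the universal flag into a universal $p$-divisible group over the non-perfect coordinate rings of $Y_{L_0}$ is Zink's theory of displays and windows \cite{Zink-pro99}---which is the tool the paper names.

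Your appeal to the local model diagram also does not close the gap. The diagram $\calM\leftarrow\widetilde{\calM}\rightarrow {\bf M}^{\loc}$ passes through a torsor under a smooth group scheme and only yields \'{e}tale-local identifications between $\calM$ and the local model; there is no ``canonical isomorphism'' of the kind your last sentence invokes, and certainly no globally defined closed immersion of $\calM_{L_i}$ into ${\bf M}^{\loc}_{\{1\},W_0}$ with which $f_i$ could be compared. At best the local model controls singularities, which does not by itself upgrade a bijective morphism to an isomorphism (the relative Frobenius of a smooth variety is bijective on $\FF$-points and not an isomorphism).

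The paper avoids constructing an inverse altogether. Using Zink's theory it upgrades the $\FF$-point bijections of Propositions \ref{DL0} and \ref{DL2} to bijections on $k$-points for \emph{every} field $k/\FF$, including imperfect $k$; this is exactly what is needed to rule out Frobenius-type maps, because surjectivity over a non-perfect residue field forces the residue extension at each generic point to be trivial. Properness of $f_i$ follows from Proposition \ref{M0proj} (source proper, target separated); properness plus quasi-finiteness gives finiteness; and finiteness plus bijectivity on all field-valued points plus normality of the smooth scheme $Y_{L_i}$ is exactly the hypothesis of Zariski's main theorem, which then says $f_i$ is an isomorphism. If you want to salvage the spirit of your proposal, replace Grothendieck--Messing by Zink's displays and drop the local-model detour; but the ZMT argument makes even that unnecessary.
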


\begin{proof}
Using the theory of windows of display \cite{Zink-pro99} in place of Dieudonn\'{e} modules, one can show $f_{0}$ is bijective on $k$-points for any field extension $k$ over $\FF$ following the same proof as in Proposition \ref{DL0}.  Since $Y_{L_{0}}$ is a closed subscheme of a projective scheme, it is projective. Moreover it is smooth and hence normal. Note that $f_{0}$ is proper since $\calM_{L_{0}}$ is proper and $Y_{L_{0}}$ is separated. Therefore we can apply Zariski's main theorem, $f_{0}$ is an isomorphism. The proof for $f_{2}$ is exactly the same.
\end{proof}

\subsection{The Bruhat-Tits stratification in the quaternionic unitary case} We can now transfer the results on the Bruhat-Tits stratification from the set theoretic level in Proposition \ref{DL0} and Proposition \ref{DL2} to the scheme theoretic level. 

For $i=0,1,2$, we define the \emph{closed Bruhat-Tits stratum of type $i$} to be $$\mathcal{M}_{\{i\}}=\bigcup_{L_{i}}\calM_{L_{i}}$$ where $L_{i}$ ranges over all the vertex lattices of type $i$. 
\begin{definition}\label{EO-strata}
We define the Ekedhal-Oort strata of $\calM_{L_{0}}$ by the following schemes.
\begin{enumerate}
 \item $\mathcal{M}^{\circ}_{L_{0}}=\mathcal{M}_{L_{0}} -( \mathcal{M}_{\{2\}}\cup \mathcal{M}_{\{1\}})$, this is an open stratum;
 \item $\calM_{L_{0},\{2\}}=\calM_{L_{0}}\cap \calM_{\{2\}}$, this is a closed stratum;
 \item $\calM^{\circ}_{L_{0},\{2\}}=(\calM_{L_{0}}\cap \calM_{\{2\}})- \calM_{\{1\}}$;
 \item $\calM_{L_{0},\{1\}}=\calM_{L_{0}}\cap \calM_{\{1\}}$, this is a closed stratum. 
 \end{enumerate}
We  call the natural decomposition 
\begin{equation*}
\calM_{L_{0}}= \mathcal{M}^{\circ}_{L_{0}}\sqcup \calM^{\circ}_{L_{0},\{2\}} \sqcup \calM_{L_{0},\{1\}} 
\end{equation*}
the Ekedahl-Oort stratification of $\calM_{L_{0}}$.

Similarly, we define the Ekedahl-Oort strata of $\calM_{L_{2}}$ by the following schemes.
\begin{enumerate}
 \item $\mathcal{M}^{\circ}_{L_{2}}=\mathcal{M}_{L_{2}} -( \mathcal{M}_{\{0\}}\cup \mathcal{M}_{\{1\}})$, this is an open stratum;
 \item $\calM_{L_{2},\{0\}}=\calM_{L_{2}}\cap \calM_{\{0\}}$, this is a closed stratum;
 \item $\calM^{\circ}_{L_{2},\{0\}}=(\calM_{L_{2}}\cap \calM_{\{0\}})- \calM_{\{1\}}$;
 \item $\calM_{L_{2},\{1\}}=\calM_{L_{2}}\cap \calM_{\{1\}}$, this is a closed stratum. 
 \end{enumerate}
We  call the natural decomposition 
\begin{equation*}
\calM_{L_{2}}= \mathcal{M}^{\circ}_{L_{2}}\sqcup \calM^{\circ}_{L_{2},\{0\}} \sqcup \calM_{L_{2},\{1\}}
\end{equation*}
the Ekedahl-Oort stratification of $\calM_{L_{2}}$.
\end{definition}

\begin{theorem}\label{stratification}
Let $L_{i}$ be a vertex lattice of type $i$ for $i=0, 2$. The isomorphism $$f_{i}: \calM_{L_{i}}\rightarrow Y_{L_{i}}$$ is compatible with the stratification on both sides: on $\calM_{L_{i}}$ we consider the Ekedahl-Oort stratification and on $Y_{L_{i}}$ we consider the stratification in Theorem \ref{DL-stratum}.
\end{theorem}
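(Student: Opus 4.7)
The plan is to leverage Proposition \ref{isom0}, which already gives that $f_i \colon \calM_{L_i} \to Y_{L_i}$ is an isomorphism of schemes, and combine it with the set-theoretic bijections established in Propositions \ref{DL0} and \ref{DL2}. Since both target schemes are reduced and locally closed, it suffices to identify the closed subschemes involved in each stratification and then match them via the bijection on $\FF$-points. The Ekedahl-Oort strata on $\calM_{L_i}$ are defined as intersections and complements built from the closed Bruhat-Tits strata $\calM_{\{j\}}$, while the Deligne-Lusztig strata on $Y_{L_i}$ are similarly built via incidence conditions on flags with respect to $\sigma$-translates. The explicit dictionary between these two sets of conditions is exactly what Propositions \ref{DL0} and \ref{DL2} record.

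The argument would proceed in three steps for the case $i=0$; the case $i=2$ is symmetric. First, I would verify that the restriction $f_0 \colon \calM_{L_0, \{1\}} \to X_{P_{\{1\}}}(1) \subset Y_{L_0}$ is an isomorphism. Both sides are zero-dimensional reduced schemes: the left is a discrete collection of superspecial points by Proposition \ref{L1}, while the right is the discrete set of $\FF_p$-rational points of $Y_{L_0}$. Bijectivity on $\FF$-points from Proposition \ref{DL0}, combined with the fact that $f_0$ is an isomorphism of ambient schemes, yields the identification. Second, for the stratum $\calM_{L_0, \{2\}} = \calM_{L_0} \cap \calM_{\{2\}}$, I would argue that $f_0$ identifies it with the closed subscheme $\overline{X_B(w_1)} \subset Y_{L_0}$: on one hand, $\calM_{L_0, \{2\}}$ is defined as an intersection of closed subschemes and is therefore closed in $\calM_{L_0}$; on the other, the closure of $X_B(w_1)$ in $Y_{L_0}$ equals $X_{P_{\{1\}}}(1) \sqcup X_B(w_1)$ by Theorem \ref{DL-stratum}. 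The set-theoretic identification from Proposition \ref{DL0} exhibits the same matching, and since $f_0$ is an isomorphism and both schemes are reduced, the scheme-theoretic images coincide. This in turn identifies the open substratum $\calM^{\circ}_{L_0, \{2\}}$ with $X_B(w_1)$.

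Third, the open stratum $\calM^{\circ}_{L_0}$ is the complement in $\calM_{L_0}$ of $\calM_{L_0, \{2\}} \cup \calM_{L_0, \{1\}}$, while $X_B(w_2)$ is the complement of $\overline{X_B(w_1)}$ in $Y_{L_0}$. Because the corresponding closed subschemes match under $f_0$, so do their open complements, and $f_0$ restricts to an isomorphism between them. Putting these three matches together gives the desired compatibility of stratifications. The case $i=2$ runs along exactly the same lines, using Proposition \ref{DL2} in place of Proposition \ref{DL0}.

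I do not expect a real obstacle in this proof since the heavy lifting was already done when establishing that $f_i$ is an isomorphism of ambient schemes (Proposition \ref{isom0}) and when matching $\FF$-points stratum-by-stratum (Propositions \ref{DL0} and \ref{DL2}). The only point requiring mild care is to make sure that each stratum is being given the \emph{reduced} induced subscheme structure on both sides, so that a scheme-theoretic isomorphism follows from an isomorphism of underlying topological spaces. This is built into our conventions: $\calM$ is by definition reduced, hence so are all its locally closed subschemes considered with the induced reduced structure, and likewise for the Deligne-Lusztig strata in $Y_{L_i}$.
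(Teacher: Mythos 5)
Your proposal is correct and takes essentially the same route as the paper: both rely on the scheme isomorphism $f_i$ from Proposition \ref{isom0} together with the $\FF$-point matching of strata from Propositions \ref{DL0} and \ref{DL2}, with the reduced structure of $\calM$ making the pointwise identification suffice. The paper states this in two lines; you unpack the same logic stratum-by-stratum (closed, then locally closed, then open), which is a fuller but identical argument.
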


\begin{proof}
Once we know that $f$ is an isomorphism, the statements about the stratification can be checked on $\FF$-points. These are already done in Proposition \ref{DL0} and Proposition \ref{DL2}. More explicitly, for $L_{0}$ we see that
\begin{enumerate}
\item $\calM^{\circ}_{L_{0}}$ is isomorphic to $X_{B}(w_{2})$;
\item $\calM^{\circ}_{L_{0},\{2\}}$ is isomorphic to $X_{B}(w_{1})$;
\item  $\calM_{L_{0},\{1\}}$ is isomorphic to $X_{P_{\{2\}}}(1)$.
\end{enumerate}
And for $L_{2}$, we have
\begin{enumerate}
\item $\calM^{\circ}_{L_{2}}$ is isomorphic to $X_{B}(w_{2})$;
\item $\calM^{\circ}_{L_{2},\{0\}}$ is isomorphic to $X_{B}(w_{1})$;
\item  $\calM_{L_{2},\{1\}}$ is isomorphic to $X_{P_{\{2\}}}(1)$.
\end{enumerate}
\end{proof}

We finally define the Bruhat-Tits stratification of $\calM$ by 
\begin{equation}
\begin{split}
&\calM^{\circ}_{\{0\}}=\bigcup_{L_{0}}\calM^{\circ}_{L_{0}}\text{ where $L_{0}$ runs through all vertex lattices of type $0$};\\
&\calM^{\circ}_{\{2\}}=\bigcup_{L_{2}}\calM^{\circ}_{L_{2}}\text{ where $L_{2}$ runs through all vertex lattices of type $2$};\\ 
&\calM_{\{1\}}=\bigcup_{L_{1}}\calM_{L_{1}}\text{ where $L_{1}$ runs through all vertex lattices of type $1$} ;\\
&\calM^{\circ}_{\{02\}}=\bigcup_{L_{0}}\calM^{\circ}_{L_{0}, \{2\}}\text{ where $L_{0}$ runs through all vertex lattices of type $0$}. \\
\end{split}
\end{equation}

We refer to the following natural decomposition of $\calM$ the \emph{Bruhat-Tits stratification} of $\calM$
\begin{equation}
\calM=\calM^{\circ }_{\{0\}}\sqcup\calM^{\circ}_{\{2\}}\sqcup \calM^{\circ}_{\{0,2\}}\sqcup \calM_{\{1\}}.
\end{equation}

\section{The supersingular locus of the quaternionic unitary Shimura variety}

\subsection{The main result on the Rapoport-Zink space}\label{main-result-section} We summarize the results from previous sections and finish the description of the Rapoport-Zink space in the following theorem. 

\begin{theorem}\label{main-result}
The formal scheme $\calN$ can be written as $\calN=\bigsqcup_{i\in \ZZ} \calN(i)$. The connected components $\calN(i)$ are all isomorphic to $\calN(0)$. Let $\calM=\calN_{red}(0)$ be the underlying reduced scheme of $\calN(0)$, then $\calM$ is pure of dimension $2$.
\begin{enumerate}
\item $\calM$ admitts the Bruhat-Tits stratification $$\calM=\calM^{\circ }_{\{0\}}\sqcup\calM^{\circ}_{\{2\}}\sqcup \calM^{\circ}_{\{0,2\}}\sqcup \calM_{\{1\}}.$$ Here $\calM^{\circ}_{\{0\}}=\bigcup_{L_{0}}\calM^{\circ}_{L_{0}}$ where $L_{0}$ runs through all the vertices of type $0$. For  each $L_{0}$, the closure of $\calM^{\circ}_{L_{0}}$ is $\calM_{L_{0}}$ which  is isomorphic to the hypersurface $x_{3}^{p}x_{0}-x^{p}_{0}x_{3}+x^{p}_{2}x_{1}-x^{p}_{1}x_{2}=0$.  The scheme $\calM_{L_{0}}$ admits a stratification $$\calM_{L_{0}}=\calM^{\circ}_{L_{0}}\sqcup \calM^{\circ}_{L_{0},\{2\}}\sqcup \calM_{L_{0}, \{1\}}$$ called the Ekedahl-Oort stratification of $\calM_{L_{0}}$. Here the closure of $\calM^{\circ}_{L_{0},\{2\}}$ is $\calM_{L_{0},\{2\}}$ and its irreducible components are isomorphic to $\PP^{1}$. The complement of $\calM^{\circ}_{L_{0},\{2\}}$ in $\calM_{L_{0},\{2\}}$ is precisely $\calM_{L_{0}, \{1\}}$ which consists of superspecial points. 

The scheme $\calM^{\circ}_{\{2\}}=\bigcup_{L_{2}}\calM^{\circ}_{L_{2}}$ where $L_{2}$ runs through all the vertices of type $2$. For  each $L_{2}$, the closure of $\calM^{\circ}_{L_{2}}$ is $\calM_{L_{2}}$ which  is isomorphic to the hypersurface $x_{3}^{p}x_{0}-x^{p}_{0}x_{3}+x^{p}_{2}x_{1}-x^{p}_{1}x_{2}=0$. Moreover it admits a stratification $$\calM_{L_{2}}=\calM^{\circ}_{L_{2}}\sqcup \calM^{\circ}_{L_{2},\{0\}}\sqcup \calM_{L_{2}, \{1\}}$$ called the Ekedahl-Oort stratification of $\calM_{L_{2}}$. Here the closure of $\calM^{\circ}_{L_{2},\{0\}}$ is $\calM_{L_{2},\{0\}}$ and its irreducible components are isomorphic to $\PP^{1}$. The complement of $\calM^{\circ}_{L_{2},\{0\}}$ in $\calM_{L_{2},\{0\}}$ is precisely $\calM_{L_{2}, \{1\}}$ which consists of superspecial points.

\item The intersection between $\calM_{L_{0}}$ and $\calM_{L_{2}}$ for a vertex lattice $L_{0}$ of type $0$ and a vertex lattice $L_{2}$ of type $2$, if nonempty, is isomorphic to a $\PP^{1}$. The intersection between $\calM_{L_{0}}$ and $\calM_{L^{\prime}_{0}}$ for a vertex lattice $L_{0}$ and a different vertex lattice $L^{\prime}_{0}$ of type $0$, if nonempty, is a point which is superspecial. The intersection between $\calM_{L_{2}}$ and $\calM_{L^{\prime}_{2}}$ for a vertex lattice $L_{2}$ of type $2$ and another vertex lattice $L^{\prime}_{2}$ of type $2$, if nonempty, is a superspecial pint. 
\end{enumerate}
\end{theorem}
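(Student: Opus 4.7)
The theorem is the capstone of Sections 2--4, and the plan is essentially to assemble the earlier results rather than prove anything new. The decomposition $\calN = \bigsqcup_{i \in \ZZ} \calN(i)$ will come from stratifying by the $p$-adic valuation of the similitude factor $c(\rho)$ of the quasi-isogeny $\rho_X$: the condition $\rho_X^{-1}\circ \lambda_{\XX}\circ \rho_X = c(\rho)\lambda_X$ forces $c(\rho)\in\QQ_p^{\times}$ and $v_p(c(\rho))$ is locally constant on $\calN$. The isomorphisms $\calN(i) \cong \calN(0)$ come from composing $\rho_X$ with a fixed element of $J_b(\QQ_p) \cong \GSp(C)$ of similitude factor $p$, which is the standard construction of \cite{RZ-Aoms}.

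For part (1), I would combine Proposition \ref{crucial_lemma} and its corollary (giving the Bruhat-Tits decomposition at the set-theoretic level) with the scheme structures on $\calM_{L_i}$ from Section 4, and then with the isomorphisms $f_i : \calM_{L_i} \xrightarrow{\sim} Y_{L_i}$ of Proposition \ref{isom0}. The explicit Fermat-like equation
\[
x_3^p x_0 - x_0^p x_3 + x_2^p x_1 - x_1^p x_2 = 0
\]
for the surface $\calM_{L_i}$ then follows from the computation at the end of the proof of Theorem \ref{DL-stratum}, which identifies $Y_{L_i}$ as the closure of the Deligne-Lusztig surface $X_B(w_2)$. The Ekedahl-Oort stratifications translate through $f_i$ into the Deligne-Lusztig stratifications by Theorem \ref{stratification}, so the $\PP^1$'s appearing as irreducible components of $\calM_{L_i, \{j\}}$ correspond to irreducible components of the closure of $X_B(w_1)$, and the closed stratum $\calM_{L_i, \{1\}}$ consists of superspecial points by Proposition \ref{L1}. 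Purity of dimension $2$ follows from the cover $\calM = \bigcup_{L_0}\calM_{L_0} \cup \bigcup_{L_2}\calM_{L_2}$ together with the fact that each $\calM_{L_i}$ is a two-dimensional surface.

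For part (2), I would read off each intersection from incidence relations in the Bruhat-Tits building of $\Sp(C)$. When $\calM_{L_0}\cap\calM_{L_2}$ is nonempty, Lemma \ref{L0_L2_intersection} produces a common type-$1$ vertex $L_1$, and under $f_0$ the intersection maps to the irreducible component of the closure of $X_B(w_1)$ determined by the $\sigma$-stable totally isotropic plane $L_{1,W_0}/pL_{0,W_0}^{\vee} \subset V(L_0)$, which is a $\PP^1$ by the explicit description in the proof of Theorem \ref{DL-stratum}. For two distinct type-$0$ vertices $L_0, L'_0$ with nonempty intersection, the last lemma of Section 3 forces any $M$ in the intersection to equal $L_{0, W_0} \cap L'_{0, W_0}$, where $L_0 \cap L'_0$ is a type-$1$ vertex; the associated point is superspecial by Proposition \ref{L1}. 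The argument for two type-$2$ vertices is identical by duality, with $L_2 + L'_2$ playing the role of the common type-$1$ vertex.

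The main obstacle here is already resolved upstream: it lies in upgrading the set-theoretic Bruhat-Tits and Ekedahl-Oort decompositions into genuine scheme-theoretic stratifications. This rests entirely on the isomorphism $f_i : \calM_{L_i} \xrightarrow{\sim} Y_{L_i}$ of Proposition \ref{isom0}, whose proof combines the isogeny trick of Section 4 with Zariski's main theorem, using that $Y_{L_i}$ is smooth and projective and that $f_i$ is a proper bijection on geometric points. Once that identification is in hand, Theorem \ref{main-result} is entirely bookkeeping, and no new computation beyond what appears in Theorems \ref{DL-stratum} and \ref{stratification} is needed.
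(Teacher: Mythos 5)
Your proposal matches the paper's approach exactly: Theorem \ref{main-result} is stated in the paper as a pure summary, with no proof beyond assembling Proposition \ref{crucial_lemma} and its corollary, the scheme structures and isomorphisms $f_i\colon \calM_{L_i}\to Y_{L_i}$ of Section 4 (Propositions \ref{M0proj}, \ref{isom0}), Theorems \ref{DL-stratum} and \ref{stratification}, Proposition \ref{L1}, and the two intersection lemmas at the end of Section 3. Your description of the decomposition $\calN=\bigsqcup_i\calN(i)$ and the role of $J_b(\QQ_p)\cong\GSp(C)$ in identifying $\calN(i)\cong\calN(0)$ is also the standard argument the paper invokes.

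One detail in your treatment of part (2) should be corrected. You write that $f_0$ sends $\calM_{L_0}\cap\calM_{L_2}$ to the $\PP^1$ indexed by the $\sigma$-stable totally isotropic plane $L_{1,W_0}/pL_{0,W_0}^{\vee}$. But if $L_1$ is a type-$1$ vertex lattice with $L_2\subset L_1\subset L_0$, then $L_{1,W_0}/pL_{0,W_0}^{\vee}$ is $3$-dimensional, not a plane. The isotropic plane that indexes the relevant $\PP^1$ (cf.\ the proof of Proposition \ref{DL0}, where $L_{2,W_0}=D\cap\sigma(D)$) is $L_{2,W_0}/pL_{0,W_0}^{\vee}$. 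Relatedly, your appeal to Lemma \ref{L0_L2_intersection} to ``produce'' a common type-$1$ vertex is backwards: that lemma goes from a given $L_1$ to the existence of $L_0\supset L_1\supset L_2$, not from a pair $(L_0,L_2)$ to an intermediate $L_1$. What you actually need is the elementary observation that nonempty $\calM_{L_0}(\FF)\cap\calM_{L_2}(\FF)$ forces $pL_{0,W_0}^{\vee}\subset^{1} L_{2,W_0}\subset^{2}L_{0,W_0}$, and then the set of isotropic $U$ with $L_{2,W_0}/pL_{0,W_0}^{\vee}\subset U\subset V(L_0)$ is a $\PP^1$. With that correction, the assembly is complete and correct.
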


\subsection{Integral model of the Shimura variety} Let $V=B\oplus B$ considered as a vector space of dimension $8$ over $\QQ$. We assume that $V$ is equipped with an alternating form $(\cdot,\cdot)$ such that $(x, by)=(b^{*}x, y)$. Then we define $G(\QQ)=\{g\in \GL_{B}(V); (gx, gy)=c(g)(x, y)\}$. Since $B$ is split at $\infty$, $G(\RR)=\GSp(4)(\RR)$. Let $h: \GG_{m}\rightarrow G(\CC)$ be the cocharacter sending $z$ to $\text{diag}(z,z,1,1)$. Moreover $h$ defines a decomposition $V_{\CC}=V_{1}\oplus V_{2}$ where $h(z)$ acts on $V_{1}$ by $z$ and on $V_{2}$ by $\bar{z}$. We fix an open compact subgroup $U=U^{p}U_{p}$ of $G(\AAA_{f})$ and we assume $U^{p}$ is sufficiently small and $U_{p}=G(\ZZ_{p})$. We also assume that there is an $\calO_{B}$ lattice $\Lambda$ in $V$ such that $G(\ZZ_{p})$ stabilizes $\Lambda\otimes \ZZ_{p}$.

To $(B,*,  V, (\cdot,\cdot), h ,\Lambda)$, we associate the following moduli problem $\gothS h_{U}$ over $\ZZ_{(p)}$. To a scheme $S$ over $\ZZ_{(p)}$, we classify the set of isomorphism classes of the quadruple $(A, \iota , \lambda, \eta )$ where:
\begin{itemize}
\item[-]  $A$ is an abelian scheme of relative dimension $4$ over $S$;
\item[-]  $\lambda: A\rightarrow A^{\vee}$ is a prime to $p$ polarization which is $\calO_{B}$-linear;
\item[-]  $\iota: \calO_{B} \rightarrow \End_{S}(A)\otimes \ZZ_{(p)}$ is a morphism such that $\lambda\circ i(a^{*})= i(a)^{\vee}\circ\lambda$ and satisfies the Kottwitz condition \begin{equation}\label{Kowtt_Shim}\det(T-\iota(a); \Lie(A))=(T^{2}-\mathrm{Trd}^{0}(a)T+\mathrm{Nrd}^{0}(a))^{2}\end{equation} for all $a\in \calO_{B}$ with $\mathrm{Trd}^{0}$ the reduced trace on $B$ and $\mathrm{Nrd}^{0}$ the reduced norm on $B$;
\item[-] $\eta: V\otimes_{\QQ} {\AAA}^{(p)}_{f} \rightarrow V^{(p)}(A)$ is a $U^{p}$-orbit of $B\otimes_{\QQ}\AAA^{(p)}_{f}$-linear isomorphisms where $$\prod_{p^{\prime}\neq p}\Ta_{p^{\prime}}(A)\otimes \AAA_{f}^{(p)}.$$ We require that $\eta$ is compatible with the Weil-pairing on the righthand side and the form $(\cdot,\cdot)$ on the lefthand side.
\end{itemize}

This moduli problem is representable by a quasi-projective variety $\Sh_{U^{p}}$  over $\ZZ_{(p)}$ cf. \cite[ Proposition 1.3]{KR-ASENS94}. Note that in \cite{KR-ASENS94}, the moduli space is presented in terms of a $\GSpin$ Shimura variety but the equivalence between the two moduli problem is clear.

We are interested in the closed subscheme  of  the supersingular locus $Sh^{ss}_{U^{p}}$ in $\Sh_{U^{p}}\otimes \FF$ considered as a closed reduced subscheme. The uniformization theorem of Rapoport-Zink \cite{RZ-Aoms} Theorem 6.1 translates this problem to problem of  describing the corresponding Rapoport-Zink space.

\begin{theorem}\label{uniformization}
There is an isomorphism of $\FF$-schemes
$$Sh^{ss}_{U^{p}}\cong I(\QQ)\backslash \calN_{red}\times  G(\AAA^{p}_{f})/U^{p}.$$
\end{theorem}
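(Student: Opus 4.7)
The plan is to deduce this isomorphism as a direct application of the Rapoport-Zink uniformization theorem \cite[Theorem 6.1]{RZ-Aoms}, specialized to our PEL datum $(B,*,V,(\cdot,\cdot),h,\Lambda)$ at the basic locus. The general form of the uniformization theorem produces, for any basic element $b$ in the relevant Kottwitz set, an isomorphism of formal $W_{0}$-schemes between the formal completion of $\Sh_{U^{p}}$ along an associated Newton stratum and a double coset of the Rapoport-Zink space times the finite adelic group quotient, where the left action is by the $\QQ$-points of the inner form $I$ of $G$ obtained by twisting with $b$.

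First I would verify the preliminary facts needed to invoke the theorem. The key point is that for our moduli problem the supersingular locus coincides with the basic Newton stratum: a point is supersingular if and only if the associated $p$-divisible group with $\calO_{B_{p}}$-action is isogenous to a product of isoclinic $p$-divisible groups of slope $1/2$, which is exactly the unique basic element in $B(G_{\QQ_{p}},\{\mu\})$ for our Shimura datum. This uses that the isocrystal must carry a $B_{p}$-action by the Kottwitz condition \eqref{Ko-cond} and that $B_{p}$ is a division algebra, which forces purely isoclinic slope $1/2$ on any supersingular point. Fix the framing $(\XX,\iota_{\XX},\lambda_{\XX})$ over $\FF$ as in Section \ref{RZ-def}; this gives a distinguished basepoint in the basic locus and its associated Rapoport-Zink space is precisely $\calN$.

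Next, I would describe the inner form $I$. It is defined as the $\QQ$-group of $B$-linear similitudes of $(\End^{0}(\XX_{0}),\lambda_{\XX})$ for a chosen supersingular abelian scheme $\XX_{0}$ in the basepoint isogeny class, and $I$ becomes an inner form of $G$ that is isomorphic to $G$ at all finite places away from $p$ and is isomorphic to $J_{b}$ at $p$; at infinity $I(\RR)$ is compact modulo center. Concretely $I(\QQ_{p})\cong \GSp(C)$ by the lemma in Section \ref{vetex-lattices}. The isomorphism $I(\AAA^{p}_{f})\cong G(\AAA^{p}_{f})$ is what allows the prime-to-$p$ level structure to be transported across the uniformization, so the double coset $I(\QQ)\backslash \calN_{red}\times G(\AAA^{p}_{f})/U^{p}$ is well-defined.

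The bulk of the argument is then the standard Rapoport-Zink recipe: given a point $(A,\iota,\lambda,\eta)\in Sh^{ss}_{U^{p}}(\FF)$, choose a quasi-isogeny $A[p^{\infty}]\to \XX$ respecting the additional structures; together with the level structure this produces an element of $\calN_{red}(\FF)\times G(\AAA^{p}_{f})/U^{p}$, and different choices of the quasi-isogeny differ by an element of $I(\QQ)$ acting diagonally. The inverse map recovers the abelian variety from the $p$-divisible group data and level structure via Serre's construction applied to the fixed supersingular isogeny class. The substantive verification -- which is the one step that is not formal -- is that the hypotheses of \cite[Theorem 6.1]{RZ-Aoms} are met for our specific PEL datum (in particular the agreement between the local model for $\Sh_{U^{p}}$ at $p$ and that for $\calN$, which was carried out in Section 2.3), and that the reduced structure on both sides matches, so that passing from the formal uniformization to an actual isomorphism of reduced $\FF$-schemes is legitimate. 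Granting this, the stated isomorphism follows at once.
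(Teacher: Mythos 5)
Your proposal is correct and follows exactly the paper's approach: the paper likewise proves this by direct appeal to \cite[Theorem 6.1]{RZ-Aoms} after identifying the supersingular locus with the basic Newton stratum and defining $I$ as the group of $\calO_{B}$-linear polarization-preserving quasi-isogenies of the framing abelian variety. Your write-up simply makes explicit the verifications (basic $=$ supersingular for a $p$-ramified division algebra, the local places of $I$, the local model comparison) that the paper leaves implicit.
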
 

Here $I$ is an inner form of $G$ with $I(\QQ_{p})=J_{b}(\QQ_{p})$. It is defined in the following way. Let  $(\bf{A}, \boldsymbol{\iota}, \boldsymbol{\lambda}, \boldsymbol{\eta})$ be a supersingular point in $Sh^{ss}_{U^{p}}$ whose associated $p$-divisible group is given by $\XX$ which we use as a base point to define the Rapoport-Zink space in Section \ref{RZ-def}. Then $I$ is defined to be the algebraic group over $\QQ$ whose $\QQ$-points $I(\QQ)$ are given by the group of quasi-isogenies in $\End(\bf{A})_{\QQ}$ which preserve the polarization $\boldsymbol{\lambda}$.

\subsection{Main theorem on the supersingular locus} We apply the results obtained previously to describe the supersingular locus of the quaternionic unitary Shimura variety.

\begin{theorem}
The $\FF$-scheme $Sh^{ss}_{U^{p}}$ is pure of dimension $2$. For $U^{p}$ sufficiently small, the irreducible components are isomorphic to the surface $x_{3}^{p}x_{0}-x^{p}_{0}x_{3}+x^{p}_{2}x_{1}-x^{p}_{1}x_{2}=0$. If two irreducible components intersects non-trivially then the intersection is either isomorphic to a $\PP^{1}$ or is a superspecial  point.
\end{theorem}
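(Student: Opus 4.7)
The plan is to deduce the statement from the Rapoport--Zink uniformization Theorem \ref{uniformization} together with the structural description of $\calM=\calN_{red}(0)$ given by Theorem \ref{main-result}.

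First I would use Theorem \ref{uniformization} to write
$$Sh^{ss}_{U^{p}}\cong I(\QQ)\backslash \calN_{red}\times  G(\AAA^{p}_{f})/U^{p}.$$
Since $\calN$ decomposes as $\calN=\bigsqcup_{i\in\ZZ}\calN(i)$ with each component isomorphic to $\calN(0)$, passing to underlying reduced schemes gives $\calN_{red}=\bigsqcup_{i\in\ZZ}\calM\cdot i$, and this decomposition is permuted transitively by a suitable element of $I(\QQ_p)\subset I(\QQ)$ acting through its image in $J_b(\QQ_p)$. I would therefore reorganize the double quotient to get, by choosing a set of representatives $g_1,\ldots,g_r$ for the finite double coset space $I(\QQ)\backslash G(\AAA^p_f)/U^p$ (finiteness follows from the standard arguments as in \cite{RZ-Aoms}), an expression
$$Sh^{ss}_{U^{p}}\cong \bigsqcup_{j=1}^{r}\Gamma_{j}\backslash \calM,$$
where $\Gamma_j=I(\QQ)\cap g_j U^p g_j^{-1}\cdot J_b(\QQ_p)^{(0)}$ is the arithmetic subgroup stabilizing the component $\calM$ (here $J_b(\QQ_p)^{(0)}$ denotes the subgroup preserving the component $\calN(0)$).

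Next I would argue that for $U^p$ sufficiently small the groups $\Gamma_j$ act freely on the set of irreducible components of $\calM$ and, more strongly, that the quotient map $\calM\to \Gamma_j\backslash\calM$ restricts to an isomorphism on each irreducible component. The key point is that $I$ is an inner form of $G$ that is anisotropic modulo center at the archimedean place (since the relevant unitary group comes from a division algebra ramified at $p$ and split at $\infty$), hence $\Gamma_j$ is a discrete subgroup of a product of a compact group and $J_b(\QQ_p)$, and shrinking $U^p$ makes the image of $\Gamma_j$ in the finite group permuting the (locally finite) set of irreducible components of $\calM$ act without fixed irreducible components. This step is the main obstacle, as one must verify that no nontrivial element of $\Gamma_j$ can fix an irreducible component of $\calM$ once $U^p$ is small enough; this is handled by a standard argument on torsion in arithmetic groups, using that the stabilizer of any irreducible component in $J_b(\QQ_p)$ is compact modulo center and the center acts trivially on the reduced scheme.

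Granting this, the irreducible components of $Sh^{ss}_{U^p}$ are in bijection with $\Gamma_j$-orbits of irreducible components of $\calM$, each mapped isomorphically to its image. By Theorem \ref{main-result}(1), every irreducible component of $\calM$ is one of the projective surfaces $\calM_{L_0}$ or $\calM_{L_2}$, each isomorphic to the Fermat-type hypersurface
$$x_{3}^{p}x_{0}-x^{p}_{0}x_{3}+x^{p}_{2}x_{1}-x^{p}_{1}x_{2}=0$$
in $\PP^3$, which gives pure dimension $2$ and the claimed shape of components. For the intersection statement, I would note that two distinct components in $Sh^{ss}_{U^p}$ meet if and only if they are images of two components of $\calM$ that meet in a single copy of $\calM$ (again by the freeness argument when $U^p$ is small). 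Applying Theorem \ref{main-result}(2), the intersection $\calM_{L}\cap\calM_{L'}$ is either empty, a $\PP^1$ (when the types are $\{0,2\}$), or a single superspecial point (when the types are $\{0,0\}$ or $\{2,2\}$), which yields the stated dichotomy for the intersection of any two irreducible components of $Sh^{ss}_{U^p}$.
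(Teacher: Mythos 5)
Your proposal is correct and follows essentially the same route as the paper, which simply cites the uniformization theorem (Theorem \ref{uniformization}) together with the main structure theorem for the Rapoport--Zink space (Theorem \ref{main-result}). You have filled in the standard (and implicit in the paper) details about how, for $U^p$ sufficiently small, the double quotient preserves irreducible components and their intersection patterns; the reasoning matches the paper's intent.
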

\begin{proof}
Using the uniformization Theorem \ref{uniformization}, the result follows from the main theorem for the Rapoport-Zink space Theorem \ref{main-result}.
\end{proof}

\section{Rapoport-Zink space for $\GSp(4)$ with paramodular level}

\subsection{Rapoport-Zink space with paramodular level}
In this section we treat the case when $G$ is the split symplectic group $\GSp(4)$ but where the level structure is the parahoric corresponding to the node labeled $1$ in the affine Dynkin diagram. This level structure is known as the paramodular level and the corresponding Shimura variety is well studied. In particular the structure of the supersingular locus is known. See for example \cite[Theorem 4.7]{Yu-doc06} where the author used the Hecke correspondence to the describe the supersingular locus. The purpose of this section is to revisit this description of the supersingular locus from the Bruhat-Tits perspective. We will also use the results in this section in \cite{Wang19b}.

First we define the Rapoport-Zink space we will study. Let $N$ be an isoclinic isocrystal of slope $\frac{1}{2}$ and height $4$ equipped with an alternating form $(\cdot,\cdot)$. We fix a $p$-divisible group $\XX$ whose associated isocrystal agrees with $N$ and a polarization $\lambda_{\XX}:\XX\rightarrow \XX^{\vee}$ which corresponds to the form $(\cdot, \cdot)$. We assume that the height of $\lambda_{\XX}$ is $2$. Recall that $\Nilp$ is the category of $W_{0}$-schemes on which $p$ is locally nilpotent. We consider the set valued functor $\mathcal{N}$ that sends $S\in\Nilp$ to the isomorphism classes of the triple $(X,  \lambda_{X}, \rho_{X})$ where
\begin{itemize} 
\item[-] $X$ is a $p$-divisible group of dimension $2$ and height $4$;
\item[-] $\lambda_{X}: X\rightarrow X^{\vee}$ is a polarization of height $2$;
\item[-] $\rho_{X}: X\times_{S} S_{0}\rightarrow \XX\times_{\FF}S_{0}$ is a quasi-isogeny on the closed subscheme of $S$ defined by the ideal given by $p$. 
\end{itemize}

For $\rho_{X}: X\times_{S} S_{0}\rightarrow \XX\times_{\FF}S_{0}$, we require that  $\rho_{X}^{-1}\circ\lambda_{\XX}\circ \rho_{X}=c(\rho)\lambda_{X}$ for a $\QQ_{p}$-multiplier $c(\rho)$.  The moduli problem $\calN$ is representable.

\begin{lemma}
The functor $\calN$ is representable by a separated formal scheme locally formally of finite type over $\Spf{W_{0}}$. Moreover $\calN$ is flat and formally smooth away from the superspecial points.
\end{lemma}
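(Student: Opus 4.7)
The plan is to follow exactly the same pattern as in the proof of Lemma \ref{singularity}, but in a more direct manner since here we are already working with the split symplectic group and the paramodular level, so no restriction to the $0$-th component or unramified base change is needed. First I would invoke the general representability theorem of Rapoport-Zink \cite[Theorem 3.25]{RZ-Aoms} applied to the PEL datum corresponding to $\GSp(4)$ with the paramodular parahoric: this immediately yields that $\calN$ is representable by a separated formal scheme locally formally of finite type over $\Spf{W_{0}}$.

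For the second assertion, I would identify the naive local model of $\calN$ with ${\bf{M}}^{\loc}_{\{1\}}$ defined earlier in the excerpt. Indeed, a polarization $\lambda_{X}$ of height $2$ on a $p$-divisible group of height $4$ is precisely what gives, on the Dieudonn\'e module $M$ of an $\FF$-point, a lattice $M\subset N$ with the chain condition $pM^{\vee}\subset^{2}M\subset^{2}M^{\vee}$, and the Hodge filtration $\calF\subset M\otimes \calO_{S}$ is a locally free, rank $2$, isotropic subbundle — these are exactly the conditions defining ${\bf{M}}^{\loc}_{\{1\}}(S)$. So the local model diagram of \cite[Definition 3.27]{RZ-Aoms} expresses $\calN$ \'etale-locally as a twist of ${\bf{M}}^{\loc}_{\{1\}}$.

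Then I would quote the result of Yu \cite[Theorem 1.3]{Yu-Proc11} already cited in the excerpt: the special fiber of ${\bf{M}}^{\loc}_{\{1\}, W_{0}}$ is singular only at a discrete set of points, where the completed local ring is $\FF[[x_{11},x_{12},x_{21},x_{22}]]/(x_{11}x_{22}-x_{12}x_{21})$; in particular it is flat over $W_{0}$ and smooth outside those singular points. Transferring this via the local model diagram yields flatness of $\calN$ and formal smoothness away from the locus where, at the corresponding $\FF$-point, the Hodge filtration $VM$ equals $pM^{\vee}$. As explained in the paragraph preceding the statement of the Yu theorem in the excerpt, this last condition translates to $M$ being superspecial.

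The main work — namely, Yu's analysis of the singularities of ${\bf{M}}^{\loc}_{\{1\}}$ — is already cited and not something to redo; so there is no real obstacle, the proof is essentially a citation and an identification. The one small point that requires a line of verification is that the local model here really is ${\bf{M}}^{\loc}_{\{1\}}$ (without any further decomposition as happened in the quaternionic case), and that a point is singular if and only if the underlying Dieudonn\'e lattice is superspecial. Both of these are direct from the definitions, so the proof reduces to writing ``this follows from \cite{RZ-Aoms} together with \cite[Theorem 1.3]{Yu-Proc11} exactly as in Lemma \ref{singularity}.''
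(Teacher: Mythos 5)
Your proof is correct and follows the same route as the paper's: representability comes from \cite[Theorem 3.25]{RZ-Aoms}, and flatness together with the singular locus is read off from Yu's analysis of the local model ${\bf{M}}^{\loc}_{\{1\}}$ in \cite{Yu-Proc11}, with the observation that the singular condition $\calF=p\Lambda^{\vee}$ corresponds under the local model diagram to superspecial Dieudonn\'e modules. The paper's own proof is a one-line citation of these two references; your version merely spells out the (straightforward) identification of the naive local model for this PEL datum with ${\bf{M}}^{\loc}_{\{1\}}$, which is consistent with how the paper treated the quaternionic case.
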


\begin{proof}
The representability of $\calN$ follows from \cite[Theorem 3.25]{RZ-Aoms} and second assertion on flatness and the singularity is proved in \cite{Yu-Proc11}.
\end{proof}

By passing to Dieudonn\'{e} modules, the $\FF$-points of the Rapoport-Zink space $\mathcal{N}(\FF)$ can be identified with the following set: 
\begin{equation}\label{set1split}
\begin{split}
\mathcal{N}(\FF) =\{M\subset N: pM\subset^{2} VM\subset^{2} M, p^{i+1}M^{\perp}\subset^{2} M\subset^{2} p^{i}M^{\perp} \text{ for some $i \in \ZZ$} \}.\\ 
\end{split}
\end{equation} 
The first condition comes from the fact that $X$ is of dimension $2$. And the second condition corresponds to the fact that $\lambda_{X}$ is a quasi-polarization of height $2$.

The formal scheme $\calN$ admits the following decomposition $\calN=\bigsqcup_{i\in\ZZ} \calN(i)$ where the the open and closed formal subscheme $\calN(i)$ of $\calN$ is defined by the condition that the quasi-isogeny $\rho_{X}$ has height $i$. Moreover, $\calN(i)$ is isomorphic to $\calN(0)$ and hence it suffices to only consider $\calN(0)$. Therefore we set $\calM=\calN_{red}(0)$ to be the underlying reduced scheme of $\calN(0)$. Then by the previous description, we arrive at the following lemma.

\begin{lemma}
The $\FF$-points of $\calM$ can be described  by
$$\mathcal{M}(\FF) =\{M\subset N; pM^{\perp}\subset^{2} M\subset^{2} M^{\perp} , pM\subset^{2} VM\subset^{2} M \}.$$
\end{lemma}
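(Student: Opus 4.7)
The plan is to derive this description directly from \eqref{set1split} by identifying which lattices correspond to the connected component $\calN(0)$. Since the previous paragraphs already give $\FF$-points of $\calN$ via covariant Dieudonn\'{e} theory, the only remaining task is to pin down the value of the integer $i \in \ZZ$ that labels a given $M$ and to show that membership in $\calN(0)$ is equivalent to $i = 0$.

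First, I would remark that for any lattice $M \subset N$ satisfying $pM \subset^2 VM \subset^2 M$, the integer $i$ in the chain $p^{i+1}M^{\perp}\subset^{2} M\subset^{2} p^{i}M^{\perp}$ is uniquely determined by $M$: it is (say) the unique integer for which $M \subset^2 p^i M^\perp$ with $M \not\subset^{\geq 3} p^i M^\perp$. So the set in \eqref{set1split} breaks up disjointly as $\calN(\FF)=\bigsqcup_{i\in\ZZ} \calN^{(i)}(\FF)$ according to the value of $i$, where $\calN^{(i)}(\FF)$ is the subset with that fixed $i$.

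Second, I would argue that this purely lattice-theoretic decomposition matches the decomposition $\calN = \bigsqcup_i \calN(i)$ by the height of the quasi-isogeny $\rho_X$. Under Dieudonn\'{e} theory, the height of $\rho_X$ is encoded as $\log_p(\mathrm{vol}(M)/\mathrm{vol}(M_\XX))$, where $M_\XX$ is the Dieudonn\'{e} lattice of the base framing object $\XX$ (normalized so that $pM_\XX^\perp \subset^2 M_\XX \subset^2 M_\XX^\perp$, since $\lambda_\XX$ has height $2$). The compatibility $\rho_X^{-1}\circ\lambda_{\XX}\circ \rho_X = c(\rho)\lambda_X$ translates into a linear relation between $v_p(c(\rho))$, the integer $i$, and the height of $\rho_X$; comparing total volumes on both sides of this identity shows that the height of $\rho_X$ vanishes precisely when $i = 0$. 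Setting $i=0$ in \eqref{set1split} produces exactly the description asserted in the lemma.

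The main obstacle is the second step, namely the verification that height zero corresponds to $i=0$ under the chosen normalization of $\lambda_\XX$. This is essentially a bookkeeping argument with volumes, using that $N$ has rank $4$ so that rescaling $M$ by $p$ changes the height of $\rho_X$ by $4$ and shifts $i$ by $-2$, which forces a unique choice of representative $M$ (up to the decomposition) within each $\calN(i)$. Once this matching is established, the lemma is immediate.
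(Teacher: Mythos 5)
Your approach is essentially the one the paper takes; the paper simply asserts the lemma ``by the previous description'' after defining $\calN(i)$, and you are supplying the bookkeeping that justifies this. The core computation is sound: fixing the framing lattice $M_\XX$ with $\mathrm{vol}(M_\XX)=p^{-1}$ (so that $pM_\XX^\perp\subset^2 M_\XX\subset^2 M_\XX^\perp$), the two conditions $M\subset^2 p^iM^\perp$ and $p^{i+1}M^\perp\subset^2 M$ force $\mathrm{vol}(M)=p^{-1-2i}$, hence the height $h$ of $\rho_X$ satisfies $h=\pm 2i$ (sign depending on convention), and $h=0\iff i=0$, which gives exactly the displayed set. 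Two small slips to correct. First, the uniqueness remark should be phrased for $M\in\calN(\FF)$ rather than for an arbitrary lattice with $pM\subset^2 VM\subset^2 M$, since the existence of a valid $i$ already constrains $\mathrm{vol}(M)$ to lie in an odd power of $p$ and is not automatic. Second, a direct check shows that replacing $M$ by $pM$ replaces $i$ by $i+2$ (not $i-2$): from $p^{i+1}M^\perp\subset^2 M\subset^2 p^iM^\perp$ and $(pM)^\perp=p^{-1}M^\perp$ one gets $p^{(i+2)+1}(pM)^\perp\subset^2 pM\subset^2 p^{i+2}(pM)^\perp$. This sign does not affect the conclusion but is worth noting because it exposes the genuine wrinkle your computation uncovers: the height of $\rho_X$ is always even, so if one literally indexes $\calN(i)$ by the height of the quasi-isogeny then the odd-index pieces are empty, and the paper's statement that all $\calN(i)$ are isomorphic to $\calN(0)$ should be read with $\calN(i)$ indexed by $v_p(c(\rho))$, as the paper does explicitly in the quaternionic case. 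Either reading gives the same $\calN(0)$, so the lemma follows as you argue.
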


\subsection{The group $J_{b}$} This Rapoport-Zink space is associated to $G=\GSp(4)$ with level structure given by the paramodular parahoric. We define the $\sigma^{2}$-linear operator $\tau=V^{-1}F$. Then the slope $0$ isocrystal $(N,\tau)$ admits a factorization $N=C\otimes_{\QQ_{p^{2}}} K_{0}$ for a $\QQ_{p^{2}}$-vector space $C=N^{\tau=1}$. By letting $\Pi$ act on $C$ by $F$, we can consider the vector space $C$ as a $B_{p}$-module of rank $2$. The isocrytal $N$ gives an element $b\in B(G)$ where $B(G)$ is the set of $\sigma$-conjugate classes of $G$.  Recall the group $J_{b}$ is defined to be the $\sigma$-conjugate centralizer of $b$. This group can be computed as follows. 

\begin{lemma}
The group $J_{b}(\QQ_{p})$ can be identified with $\GU_{B_{p}}(C)(\QQ_{p})$.
\end{lemma}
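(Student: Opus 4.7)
The plan is to imitate the argument used for the analogous statement in Section 3.3 for the quaternionic unitary case, reading the construction in reverse. Unwinding the definition (cf.~\cite[Proposition 1.12]{RZ-Aoms}), $J_{b}(\QQ_{p})$ consists of those $g\in\GSp(N)$ (viewed as $K_{0}$-linear automorphisms of $N$ preserving the form $(\cdot,\cdot)$ up to a scalar in $\QQ_{p}^{\times}$) such that $gF=Fg$. I will produce natural maps in both directions between $J_{b}(\QQ_{p})$ and $\GU_{B_{p}}(C)(\QQ_{p})$ and check they are mutually inverse.

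For the forward direction, let $g\in J_{b}(\QQ_{p})$. Since $FV=VF=p$, commuting with $F$ forces $g$ to commute with $V$, and hence with $\tau=V^{-1}F$. Therefore $g$ preserves the $\tau$-fixed $\QQ_{p^{2}}$-subspace $C=N^{\tau=1}$, giving a $\QQ_{p^{2}}$-linear endomorphism of $C$ (the $\QQ_{p^{2}}$-action comes from scalars and is automatically respected). Because $\Pi$ acts on $C$ as the restriction of $F$ and $g$ commutes with $F$, the action of $g$ on $C$ commutes with $\Pi$ as well, so $g|_{C}$ is $B_{p}$-linear. It remains to check that $g|_{C}$ preserves the induced hermitian form up to a scalar; this follows from the identity $(Fx,y)=(x,Vy)^{\sigma}$ on $N$ together with $(gx,gy)=c(g)(x,y)$, upon restriction to $C$ where $F$ and $V$ both coincide with $\Pi$. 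Thus $g|_{C}\in\GU_{B_{p}}(C)(\QQ_{p})$, and this assignment is visibly a group homomorphism.

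For the backward direction, given $h\in\GU_{B_{p}}(C)(\QQ_{p})$, extend $h$ by $K_{0}$-linearity to an endomorphism $\tilde{h}$ of $N=C\otimes_{\QQ_{p^{2}}}K_{0}$. Since $F$ is $\sigma$-semilinear on $N$ and acts as $\Pi$ on $C$, and $\tilde h$ is defined by $\QQ_{p^{2}}$-linear extension of scalars of an operator commuting with $\Pi$, a direct check on pure tensors $c\otimes\alpha$ shows $\tilde h F=F\tilde h$. The $B_{p}$-hermitian compatibility of $h$ transports to the symplectic compatibility with $(\cdot,\cdot)$ on $N$, using the same dictionary between the two forms as in Section~\ref{quaternionic-module}. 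Hence $\tilde h\in J_{b}(\QQ_{p})$. The two constructions are manifestly inverse on elements (the second is extension of scalars, the first is restriction to the $\tau$-fixed locus), completing the proof.

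The main obstacle I anticipate is purely bookkeeping: one must pin down the precise form on $C$ induced from $(\cdot,\cdot)$ on $N$ and verify that the $\GSp$-compatibility condition translates to the $\GU_{B_{p}}$-compatibility under the identifications above. Once the involution $*$ on $B_{p}$ is matched with the Rosati-type involution on $\End_{B_{p}}(C)$ arising from the form, both directions become mechanical semi-linear algebra.
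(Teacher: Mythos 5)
Your proof is correct and follows essentially the same route as the paper: commuting with $F$ forces $g$ to preserve $C$ and commute with $\Pi$, and the $\GSp$-compatibility with $(\cdot,\cdot)$ restricts to the $\GU_{B_p}$-compatibility on $C$. The only difference is cosmetic — you make the inverse (extension by $K_0$-linearity from $C$ to $N=C\otimes_{\QQ_{p^2}}K_0$) explicit, whereas the paper compresses this into the phrase that $g$ ``is determined by this restriction''; the form-bookkeeping you flag as a remaining chore is also left implicit in the paper's own one-paragraph argument.
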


\begin{proof}
The group $J_{b}(\QQ_{p})$ is by definition the group $$\{g\in \End(N)^{\times}_{\QQ}: gF=Fg, (gx, gy)=c(g)(x, y), c(g)\in \QQ\}.$$ Since $g$ commutes with $\tau=V^{-1}F$, it restricts to an action on $C$ and is determined by this restriction. Since $F$ acts on $C$ by $\Pi$, $$J_{b}(\QQ_{p})=\{g\in \End_{B_{p}}(C)^{\times}_{\QQ}: (gx, gy)=c(g)(x, y), c(g)\in \QQ\}=\GU_{B_{p}}(C)(\QQ_{p}).$$ \end{proof}

\section{Bruhat-Tits stratification for Rapoport-Zink space with paramodular level}
\subsection{Vertex lattices in $\GU_{B_{p}}(2)$} The group $\GU_{B_{p}}(2)$ splits over $\QQ_{p^{2}}$ and can be identified with $\GSp(4)$ over $\QQ_{p^{2}}$. We define vertex lattices $L$ in $C$ similarly as in Section \ref{vetex-lattices}. We use the same terminology as in Section \ref{vetex-lattices} and recall a vertex lattice of type $0$ corresponds to a lattice $L$ with $pL^{\perp}\subset^{4} L \subset^{0} L^{\perp}$, a vertex lattice of type $1$ corresponds to a lattice $L$ with $pL^{\perp}\subset^{2} L \subset^{2} L^{\perp}$ and a vertex lattice of type $2$ corresponds to a lattice $L$ with $pL^{\perp}\subset^{0} L \subset^{4} L^{\perp}$.  Here the dual $\perp$ is taken with respect to the alternating form $(\cdot,\cdot)$ restricted on $C$.

\begin{remark}
The relation between the group $\GSp(4)$ and $\GU_{B_{p}}(2)$ can be also demonstrated on the affine Dynkin diagram. For $\GSp(4)$, it is given by
\begin{displaymath}
 \xymatrix{\underset{0}\bullet \ar@2{->}[r] &\underset{1}\bullet&\underset{2}\bullet\ar@2{->}[l]}
\end{displaymath}
with Frobenius acting trivially on the diagram. For $\GU_{B_{p}}(2)$, it is given by the same affine Dynkin diagram
\begin{displaymath}
 \xymatrix{\underset{0}\bullet \ar@/^1pc/[rr]\ar@2{->}[r] &\underset{1}\bullet&\underset{2}\bullet\ar@2{->}[l] \ar@/_1pc/[ll]}
\end{displaymath}
with Frobenius acting on the diagram by permuting the nodes ${0}$ and $2$. Therefore the $0$-vertex and $2$-vertex should be identified when we consider vertex lattices for $\GU_{B_{p}}(2)$. We define a vertex lattice of type $\{0,2\}$ as a pair of lattices $(L_{0}, L_{2})$ where $L_{0}$ is a vertex lattice of type $0$ and $L_{2}$ is a vertex lattice of type $2$.

We also remark that $\GU_{B_{p}}(2)$ has two types of maximal parahoric subgroups by the above discussion on the affine Dynkin diagram.  A vertex lattice of type $1$ gives one of the maxiamal parahoric subgroups, and under the identification of $\GSp(4)(\QQ_{p^{2}})$ with $\GU_{B_{p}}(2)(\QQ_{p^{2}})$, this is the paramodular parahoric of  $\GSp(4)(\QQ_{p^{2}})$. On the other hand the parahoric subgroup of $\GU_{B_{p}}(2)$ corresponding to the a vertext lattice of type $\{0,2\}$ corresponds to the Siegel parahoric under $\GSp(4)(\QQ_{p^{2}})\cong\GU_{B_{p}}(2)(\QQ_{p^{2}})$.
\end{remark}

\begin{definition}\label{stratum-set-split} 
\hfill
\begin{enumerate}
\item For a vertex lattice $(L_{0}, L_{2})$ of type $\{0, 2\}$, define $\mathcal{M}_{L_{0},L_{2}}(\FF)$ to be the set $$ \{M\in \mathcal{M}(\FF): L_{2, W_{0}}\subset M\subset L_{0,W_{0}}\}.$$
\item For a vertex lattice $L_{1}$ of type $1$, define $\mathcal{M}_{L_{1}}(\FF)$ to be the set $$ \{M\in \mathcal{M}(\FF): L_{1,W_{0}}\subset M\}.$$
\end{enumerate}

We refer to these subsets as the lattice strata.
\end{definition}

To see that the lattice strata form a covering of the set $\calM(\FF)$, we need the following result.
\begin{proposition}\label{crucial_lemma_split}
Given $M\in\calM(\FF)$, there are $\tau$-stable lattices $L^{-}(M)$ and $L^{+}(M)$ of $N$ such that we have
 \begin{equation}\label{L+}
 M\subset L^{+}(M)\subset L^{+}(M)^{\perp}\subset M^{\perp}
 \end{equation}
 and 
 \begin{equation}\label{L-}
 pM^{\perp}\subset pL^{-}(M)^{\perp}\subset L^{-}(M)\subset M.
 \end{equation}
 \end{proposition}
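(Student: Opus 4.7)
The plan is to mimic the proof of Proposition~\ref{crucial_lemma}, adapted to the paramodular setup. I would define $L^+_j := M + \tau M + \cdots + \tau^j M$ for $j \geq 0$; by \cite[Proposition 2.7]{RZ-Aoms} there is a minimal $d$ for which $L^+_d$ is $\tau$-stable, and I set $L^+(M) := L^+_d$. Dually, the descending sequence $L^-_j := M \cap \tau M \cap \cdots \cap \tau^j M$ stabilizes to a $\tau$-stable lattice $L^-(M) \subset M$. If $M = \tau M$ then $L^{\pm}(M) = M$ and the two assertions reduce to the given conditions $M \subset M^\perp$ and $pM^\perp \subset M$; so we may assume $M \neq \tau M$ in what follows.

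The key step is the isotropy $L^+(M) \subset L^+(M)^\perp$. The compatibility $(\tau x, \tau y) = \sigma^2(x, y)$ of the form with $\tau$ implies that the perpendicular of any $\tau$-stable lattice is again $\tau$-stable; by duality $L^+(M)^\perp$ is then the largest $\tau$-stable lattice contained in $M^\perp$. It therefore suffices to establish $L^+(M) \subset M^\perp$, or equivalently $\tau^n M \subset M^\perp$ for every $n \geq 0$. The case $n = 0$ is hypothesis; for $n \geq 1$ the crux is the identity $(M, \tau M) \subset W_0$, extracted from the relations $FM, VM \subset M$, the hypothesis $M \subset M^\perp$, and the adjointness $(Fx, y) = \sigma(x, Vy)$. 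Once this is in hand, the general case follows by combining the identity with the $\tau$-equivariance of the form.

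The assertion $pL^-(M)^\perp \subset L^-(M)$ is handled dually. One checks that $L^-(M)^\perp = \sum_n \tau^n M^\perp$ is the smallest $\tau$-stable lattice containing $M^\perp$, so the inclusion reduces to $p \tau^k M^\perp \subset M$ for every $k \in \ZZ$. Since $\tau$ commutes with multiplication by $p$, this is the mirror of the statement used for $L^+(M)$ and is proved by the same Dieudonn\'e-level computation starting from the given $pM^\perp \subset M$.

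The main obstacle is the key identity $(M, \tau M) \subset W_0$. Unlike the quaternionic setting, where the Kottwitz condition forces $\tau D \subset D^\vee$ as part of the moduli definition, here the analogous inclusion $\tau M \subset M^\perp$ must be extracted directly from the Dieudonn\'e module structure and the compatibility of the form with $F$ and $V$. Once this identity is verified, the case analysis on whether $M + \tau M$ is itself $\tau$-stable (in which case $L^+(M) = M + \tau M$ and $L^-(M) = M \cap \tau M$) or requires further iteration proceeds exactly as in the proof of Proposition~\ref{crucial_lemma}.
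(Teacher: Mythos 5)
Your high-level plan (set $L^{\pm}(M)$ equal to the stabilized unions/intersections of $\tau^{j}M$, dispose of the $\tau$-stable case trivially, then prove the two chains of inclusions) matches the shape of the paper's argument, but the step you flag as the ``crux'' is precisely where the proposal has a genuine gap, and your suggested route to it would not close.

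The claim that $(M,\tau M)\subset W_{0}$ can be ``extracted from the relations $FM, VM\subset M$, the hypothesis $M\subset M^{\perp}$, and the adjointness $(Fx,y)=\sigma(x,Vy)$'' is not correct. Running those relations through the adjointness only produces $(\tau m, m')\in p^{-1}W_{0}$: writing $(\tau m, m')=(FV^{-1}m,m')=(V^{-1}m,Vm')^{\sigma}$ gets you a pairing of elements not both in $M$, and the best linear-algebra bound one can squeeze out of $M\subset M^{\perp}$ together with $FV=VF=p$ is $\tau M\subset p^{-1}M^{\perp}$, one order of $p$ short of what is needed. The paper closes this gap with a piece of Dieudonn\'{e}-theoretic information that your proposal never invokes: since $M$ is supersingular and (in the nontrivial case) not $\tau$-stable, its $a$-number is $1$, and then Lemma~\ref{NO_lemma} (Norman--Oort) says $F^{2}M+pM$ is the \emph{unique} colength-one sub-$\FF[F,V]$-module of $FM$. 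Comparing with $pM^{\perp}\cap FM$, which is also colength one in $FM$, forces $F^{2}M\subset pM^{\perp}$, i.e.\ $\tau M\subset M^{\perp}$. Dually, applying the same lemma to $M^{\perp}$ (noting $a(M^{\perp})=1$) gives $p\tau(M^{\perp})\subset M$. Without this $a$-number input the required inclusions simply do not follow from the form's compatibilities.

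A second, smaller issue: the iterative construction $L^{+}_{j}=M+\tau M+\cdots+\tau^{j}M$ and the appeal to ``further iteration as in Proposition~\ref{crucial_lemma}'' are unnecessary here, and the way you propose to justify the iteration would not work anyway. The paper shows directly (again via Lemma~\ref{NO_lemma}) that $L^{-}(M)=M\cap\tau M$ equals $FM+VM$ and satisfies $FL^{-}(M)=VL^{-}(M)$, hence is superspecial and $\tau$-stable, and that $L^{+}(M)=M+\tau M=V^{-1}L^{-}(M)$ is likewise $\tau$-stable. So in the paramodular setting the first step already terminates. And even if iteration were needed, $(M,\tau M)\subset W_{0}$ plus $\tau$-equivariance of the form only yields $(\tau^{k}M,\tau^{k+1}M)\subset W_{0}$; it does not give $(M,\tau^{n}M)\subset W_{0}$ for $n\geq 2$, so the inductive step you sketch would not close. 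The correct engine for this proposition is the $a$-number lemma, not the adjointness identities alone.
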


Before we prove this Proposition, we need to recall a few basic facts about Dieudonn\'{e} modules that are stated in \cite[(11a)-(11d)]{NO-Ann80}. Define the $a$-number of the Dieudonn\'{e} module $M$ by  $$a(M)=\dim_{\FF} M/ (FM+VM).$$ Notice that $a(M)=2$ if and only if $M$ is superspecial. 
\begin{lemma}\label{NO_lemma}
Suppose $M$ has $a(M)=1$, then we have
\begin{enumerate}
\item  $F^{2}(M)/pM$ is the unique sub-$\FF[F,V]$-module of $F(M)/pM$ of rank $1$;
\item $F^{2}(M)+pM=FM\cap VM= V^{2}(M)+pM $.
\end{enumerate}
\end{lemma}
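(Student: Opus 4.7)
The plan is to establish (2) first and derive (1) as a formal consequence. The strategy for (2) is to combine a dimension count in $M/pM$ with a duality argument that exploits the polarization on $M$ implicit in the paramodular setup.

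Since $\dim_\FF FM/pM=\dim_\FF VM/pM=2$ and $a(M)=1$, inclusion--exclusion forces $\dim_\FF(FM\cap VM)/pM=1$. The $\sigma$-linear Frobenius $\bar F$ on $M/pM$ has kernel $VM/pM$ (from $FV=p$), so its restriction to $FM/pM$ has kernel $(FM\cap VM)/pM$ of $\FF$-dimension $1$ and, by $\sigma$-linear rank--nullity, image $F^2M/pM$ of $\FF$-dimension $1$; dually $V^2M/pM$ is a $1$-dimensional subspace of $VM/pM$. The main task of (2) is then to promote these dimension counts to the inclusions $F^2M\subset VM$ and $V^2M\subset FM$, which together with the dimension data immediately give the triple equality $F^2M+pM=FM\cap VM=V^2M+pM$. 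For this I would pick $e\in M$ whose class generates the $1$-dimensional quotient $M/(FM+VM)$ (possible since $a(M)=1$), so that $M=W_0\cdot e+FM+VM$ by Nakayama, and then exploit the polarization adjunction $(Fx,y)=\sigma(x,Vy)$: the identity $(F^2 e,e)=\sigma^2(e,V^2 e)$ combined with the alternating property and the non-vanishing of $\overline{(e,F^2 e)}\in\FF$ should force the $1$-dimensional subspaces $F^2M/pM$ and $V^2M/pM$ of $M/pM$ to coincide, and hence both to equal $(FM\cap VM)/pM$. \emph{The main obstacle} is rigorously tying $\bar{V^2 e}$ to $\bar{F^2 e}$ via the adjunction: the Frobenius twist $\sigma^2$ and the normalization of the polarization have to be aligned correctly, and without the polarization the conclusion actually fails --- one can construct a cyclic Dieudonn\'{e} module of rank $4$ with $a(M)=1$ but $F^2M\not\subset VM$, so this step genuinely uses the alternating form.

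Given (2), part (1) is essentially formal. A rank-$1$ sub-$\FF[F,V]$-module $L$ of $FM/pM$ is a $1$-dimensional $\FF$-subspace stable under $F$ and $V$. Since $V\cdot FM=pM$, the operator $V$ vanishes on $FM/pM$ and so $V$-stability is automatic. For a $1$-dimensional $F$-stable subspace $L$, the restriction $\bar F|_L$ is either zero (forcing $L\subset\ker(\bar F|_{FM/pM})=(FM\cap VM)/pM$) or an automorphism (forcing $L=\bar F(L)\subset F^2M/pM$); in either case $L$ equals one of these two distinguished $1$-dimensional subspaces, and by (2) they coincide, yielding uniqueness.
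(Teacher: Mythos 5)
Your dimension count in $M/pM$ is correct, as is the deduction of (1) from (2). But the crux of (2) --- the inclusions $F^{2}M\subset VM$ and $V^{2}M\subset FM$, equivalently $\bar F^{3}=0$ and $\bar V^{3}=0$ on $M/pM$ --- is left open, and the route you sketch via the alternating form is not the missing ingredient. What is missing is supersingularity. The lattice $M$ sits in the isoclinic slope-$\tfrac{1}{2}$ isocrystal $N$, so the underlying $p$-divisible group is bi-infinitesimal and both $F$ and $V$ are topologically nilpotent on $M$; hence $\bar F$ and $\bar V$ are nilpotent $\sigma^{\pm 1}$-semilinear operators on the $4$-dimensional $\FF$-space $M/pM$. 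Since $FV=VF=p$ kills $M/pM$, one has $VM/pM\subset\ker\bar F$, and the rank count $\dim_{\FF}FM/pM=2$ (from $pM\subset^{2}VM\subset^{2}M$, the height-$4$, dimension-$2$ constraint) forces $\ker\bar F=VM/pM$ with $\dim_{\FF}\ker\bar F=2$. A nilpotent semilinear endomorphism of a $4$-dimensional space with $2$-dimensional kernel has Jordan type $(2,2)$ or $(3,1)$; in both cases $\bar F^{3}=0$, so $\bar F^{2}(M/pM)\subset\ker\bar F=VM/pM$, i.e. $F^{2}M+pM\subset FM\cap VM$, and your $1$-dimensionality count upgrades this to equality. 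The same argument with $\bar V$ gives $V^{2}M+pM=FM\cap VM$. (The hypothesis $a(M)=1$ singles out Jordan type $(3,1)$ and guarantees $\bar F^{2}\neq 0$, which is what makes the two sides of (2) genuinely $1$-dimensional rather than zero.)

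Your claimed counterexample therefore cannot occur in the relevant setting: if $F^{2}M\not\subset VM$ with $\bar F$ nilpotent, then $\bar F^{3}\neq 0$, forcing $\bar F$ to be a single Jordan block of size $4$ on $M/pM$, whence $\dim_{\FF}M/FM=1$; such an $M$ corresponds to a $p$-divisible group of dimension $3$ and height $4$, of slope $3/4$, which is not supersingular. So there is no supersingular cyclic rank-$4$ Dieudonn\'{e} module with $a(M)=1$, $\dim_{\FF}M/FM=2$ and $F^{2}M\not\subset VM$, and the alternating form is not needed for (2). It is also worth noting that the paper itself supplies no proof of the lemma; it simply cites statements (11a)--(11d) of Norman--Oort \cite{NO-Ann80}.
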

\begin{proof}
This is precisely the statement of $11(b), 11(c)$ and $11(d)$ in \cite{NO-Ann80}. 

\end{proof}

\begin{myproof}{Proposition}{\ref{crucial_lemma_split}}
Given $M\in\calM(\FF)$, then we set $L^{+}(M)=M+\tau(M)$ and $L^{-}(M)=M\cap \tau(M)$. First suppose $M$ is $\tau$-stable, then there is nothing to prove as $L^{+}(M)=L^{-}(M)=M$. Note in this case $M$ is $\tau$-stable and hence $M$ is superspecial with $a(M)=2$.

Now suppose $M$ is not $\tau$-stable. Since $M$ corresponds to a supersingular $p$-divisible group, $a(M)$ is at least $1$ and is equal to $1$ in this case. By Lemma \ref{NO_lemma}, we have $FM\cap VM= V^{2}M+pM$. Therefore $L^{-}(M)=M\cap \tau(M)=FM+V(M)$ and $L^{+}(M)=M+\tau(M)= V^{-1}(M\cap\tau(M))$. 
Since $FL^{-}(M)=F^{2}M+pM$ and $VL^{-}(M)=V^{2}M+pM$, $FL^{-}(M)=VL^{-}(M)$ and $L^{-}(M)$ is superspecial and $\tau$-stable. An easy computation shows that $M$ is $\tau$-stable if and only if $M^{\perp}$ is $\tau$-stable. Therefore $a(M^{\perp})=1$ and we can apply Lemma \ref{NO_lemma} to $M^{\perp}$ to get $F^{2}(M^{\perp})+pM^{\perp}$ is the unique colength $1$ Dieudonn\'{e} submodule of $F(M^{\perp})$. But $M\cap F(M^{\perp})$ is also colength $1$ in $F(M^{\perp})$ as otherwise $M=F(M^{\perp})$ will imply $a(M^{\perp})=2$. Indeed, take the dual of this equation give $M=V(M^{\perp})$. This implies that $M\cap F(M^{\perp})= F^{2}M^{\perp}+pM^{\perp}$ and therefore $F^{2}(M^{\perp})\subset M$. This is the same as $p\tau(M^{\perp})\subset M$ and hence $pM^{\perp}+p\tau(M^{\perp})\subset M\cap \tau(M)$.  But $pL^{-}(M)^{\perp}= pM^{\perp}+ p\tau (M^{\perp})$, thus $pL^{-}(M)^{\perp}\subset L^{-}(M)$. This finishes the proof for \eqref{L-}.

Since $L^{+}(M)=M+\tau(M)=V^{-1}(L^{-}(M))$,  $FL^{+}(M)= FV^{-1}(L^{-}(M))=\tau^{-1}L^{-}(M)=L^{-}(M)$ and $VL^{+}(M)= VV^{-1}(L^{-}(M))=L^{-}(M)$. This implies that $a(L^{+}(M))=2$. By the same argument as above using Lemma \ref{NO_lemma}, $F^{2}(M)+pM= pM^{\perp}\cap F(M)$. Therefore $\tau(M)\subset M^{\perp}$ and $L^{+}(M)=M+\tau(M)\subset M^{\perp}\cap \tau(M)^{\perp}=L^{+}(M)^{\perp}$.  This shows \eqref{L+}.

\end{myproof}

\begin{remark}
It is clear by considering the indices in \eqref{L+} and \eqref{L-} that if $M$ is $\tau$-stable, then $L^{+}(M)=L^{-}(M)=M$ corresponds to a vertex lattice of type $1$. If $M$ is not $\tau$-stable, then $L^{+}(M)$ corresponds to a vertex lattice of type $0$ and $L^{-}(M)$ corresponds to a vertex lattice of type $2$. Hence $(L^{-}(M), L^{+}(M))$ gives rise to a vertex lattice of type $\{0,2\}$. 
\end{remark}

We define similarly as in the quaternionic case 
\begin{equation}
\begin{split}
&\calM_{\{0,2\}}(\FF)=\bigcup_{L_{0}, L_{2}}\calM_{L_{0},L_{2}}(\FF)\\
&\text{ where $(L_{0}, L_{2})$ runs through all the vertex lattices of type $\{0,2\}$}.\\
&\calM_{\{1\}}(\FF)=\bigcup_{L_{1}}\calM_{L_{1}}(\FF)\\
&\text{ where $L_{1}$ runs through all the vertex lattices of type $1$}.\\
\end{split}
\end{equation}

By the above remark we obtain the following decomposition of the set $\calM(\FF)$.

\begin{lemma}
We have $\calM(\FF)=\calM_{\{0,2\}}(\FF)\cup \calM_{\{1\}}(\FF)$ and $\calM_{\{1\}}(\FF)$ consists of all the superspecial points.  
\end{lemma}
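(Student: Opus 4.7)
The plan is to read the lemma directly off Proposition \ref{crucial_lemma_split} and the remark following it, together with one small index computation. Given $M \in \calM(\FF)$, I would first apply Proposition \ref{crucial_lemma_split} to produce the $\tau$-stable lattices $L^\pm(M)$ satisfying the chains \eqref{L+} and \eqref{L-}.

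In the $\tau$-stable case the remark gives $L^+(M) = L^-(M) = M$ and identifies $M$ as (the base change to $W_0$ of) a vertex lattice $L_1 := M^{\tau=1}$ of type $1$, so $M \in \calM_{L_1}(\FF) \subset \calM_{\{1\}}(\FF)$. In the non-$\tau$-stable case, the remark identifies $L^+(M)$ with a vertex lattice of type $0$ and $L^-(M)$ with a vertex lattice of type $2$; the containments from \eqref{L+} and \eqref{L-} immediately place $M$ in $\calM_{L^-(M), L^+(M)}(\FF) \subset \calM_{\{0,2\}}(\FF)$. This yields the set-theoretic decomposition.

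For the superspecial characterization I would use that $N$ is isoclinic of slope $\frac{1}{2}$, so that $M$ is superspecial iff $FM = VM$, which in turn is equivalent to $\tau$-stability via $\tau = V^{-1}F$. The forward direction is immediate: if $M$ is superspecial then $M$ is $\tau$-stable and the first case above puts $M$ in $\calM_{\{1\}}(\FF)$. For the converse, suppose $L_{1,W_0} \subset M$ for some vertex lattice $L_1$ of type $1$. Taking perpendiculars and combining with $pM^\perp \subset^2 M \subset^2 M^\perp$ one obtains a chain $L_{1,W_0} \subset M \subset M^\perp \subset L_{1,W_0}^\perp$; comparing this with $pL_{1,W_0}^\perp \subset^2 L_{1,W_0} \subset^2 L_{1,W_0}^\perp$ (both total colengths between $L_{1,W_0}$ and $L_{1,W_0}^\perp$ equal $4$) forces $M = L_{1,W_0}$, which is $\tau$-stable and hence superspecial.

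The one genuinely substantive step is Proposition \ref{crucial_lemma_split}, already established above; the present lemma is essentially bookkeeping, and the only delicate point I anticipate is ruling out, by the index comparison just outlined, the possibility that $L_{1,W_0}$ is a strict sublattice of $M$.
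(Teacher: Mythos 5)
Your argument is correct and follows essentially the same route as the paper: the set-theoretic decomposition is a direct translation of Proposition \ref{crucial_lemma_split} together with the remark classifying $L^{\pm}(M)$ by type, and the superspecial characterization reduces to the equivalence ``superspecial $\Leftrightarrow$ $\tau$-stable.'' The paper treats the second statement as clear, while you spell out the converse (that $L_{1,W_0}\subset M$ already forces equality); this is a genuine, if small, gap-fill, and it is precisely the computation the paper does carry out explicitly in the analogous quaternionic statement (Proposition \ref{L1}), so it is natural to include it here too. One minor numerical slip: in your index comparison the colength from $L_{1,W_0}$ to $L_{1,W_0}^{\perp}$ is $2$, not $4$ (you may have been reading off the chain from $pL_{1,W_0}^{\perp}$ to $L_{1,W_0}^{\perp}$); the sandwich $L_{1,W_0}\subset^{a} M\subset^{2}M^{\perp}\subset^{a}L_{1,W_0}^{\perp}$ then gives $2a+2=2$, so $a=0$, and the conclusion $M=L_{1,W_0}$ is unaffected.
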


\begin{proof}
The first statement is a translation of Proposition \ref{crucial_lemma_split} and the second statement is clear as $M$ is superspecial if and only if $M$ is $\tau$-stable.
\end{proof}

\subsection{Bruhat-Tits stratification} Our next goal is to relate the set theoretic lattice stratum  $\calM_{L_{0}, L_{2}}(\FF)$ to a Deligne-Lusztig variety which in this case is simply $\PP^{1}(\FF)$. Suppose  $\calM_{L_{0}, L_{2}}(\FF)$ is non-empty, we define a two dimensional $\FF$-vector space by $V_{L_{0,2}}=L_{0,W_{0}}/L_{2,W_{0}}$.

\begin{proposition}\label{set-bruhat-tits}
Let $L_{0}$ be a vertex lattice of type $0$ and $L_{2}$ be a vertex lattice of type $2$. Suppose $\calM_{L_{0}, L_{2}}(\FF)$ is non-empty. There is a bijection between $\calM_{L_{0},L_{2}}(\FF)$ and $\PP^{1}(V_{L_{0,2}})(\FF)$. Moreover the superspecial points on $\calM_{L_{0},L_{2}}(\FF)$ correspond to the $\FF_{p^{2}}$-points on $\PP^{1}(V_{L_{0,2}})(\FF)$. 
\end{proposition}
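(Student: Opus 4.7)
The plan is to construct the bijection explicitly via a preimage construction, mirroring the strategy of Propositions~\ref{DL0} and~\ref{DL2} but now working in the two-dimensional quotient $V_{L_{0,2}}$ rather than a four-dimensional symplectic space.

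First, for the forward direction, I take $M\in\calM_{L_0,L_2}(\FF)$ and show that $M$ sits halfway between $L_{2,W_0}$ and $L_{0,W_0}$. Using the self-duality $L_{0,W_0}=L_{0,W_0}^\perp$ (type~$0$), the relation $L_{2,W_0}^\perp=p^{-1}L_{2,W_0}$ (type~$2$), and the defining duality condition $pM^\perp\subset^2 M\subset^2 M^\perp$, a direct index computation yields the chain of colength-one inclusions
\[
 pL_{0,W_0}\ \subset^1\ pM^\perp\ \subset^1\ L_{2,W_0}\ \subset^1\ M\ \subset^1\ L_{0,W_0}.
\]
Hence $\phi(M):=M/L_{2,W_0}$ is an $\FF$-line in $V_{L_{0,2}}$. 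Conversely, given a line $\ell\subset V_{L_{0,2}}$, I take $M_\ell\subset L_{0,W_0}$ to be its preimage; dualising the index chain gives $pM_\ell^\perp\subset^2 M_\ell\subset^2 M_\ell^\perp$ automatically, and the Dieudonn\'e stability $VM_\ell\subset M_\ell$ (equivalent, with the indices, to $pM_\ell\subset^2 VM_\ell\subset^2 M_\ell$) is obtained by inserting any type-$1$ vertex lattice $L_1$ with $L_2\subset L_1\subset L_0$ (such an $L_1$ exists since $\calM_{L_0,L_2}(\FF)\neq\emptyset$ forces $L_0/L_2$ to be a two-dimensional $\FF_p$-space, any line of which lifts to such an $L_1$) and transporting $V$-stability from the bona fide Dieudonn\'e lattice $L_{1,W_0}\in\calM(\FF)$ to the surrounding lattices and then to $M_\ell$.

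For the second assertion, recall that a Dieudonn\'e lattice $M$ is superspecial iff $FM=VM$, equivalently iff $M$ is $\tau$-stable. Since $L_0,L_2\subset C=N^{\tau=1}$, both $L_{0,W_0}$ and $L_{2,W_0}$ are $\tau$-stable, so $\tau$ descends to a $\sigma^2$-semilinear automorphism of $V_{L_{0,2}}$ and equips it with a natural $\FF_{p^2}$-structure via its fixed points. Then $M$ is $\tau$-stable iff $\phi(M)$ is, and the $\tau$-stable $\FF$-lines are exactly the $\FF_{p^2}$-rational points of $\PP^1(V_{L_{0,2}})$. The main technical obstacle I foresee is the $V$-stability step in the inverse construction: because $L_{0,W_0}$ and $L_{2,W_0}$ are themselves \emph{not} Dieudonn\'e lattices of the RZ space, their $V$-stability has to be imported from the type-$1$ scaffold $L_{1,W_0}$ rather than read off directly from the slope-$\tfrac{1}{2}$ structure on $C$.
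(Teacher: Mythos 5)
Your forward map and the index computation leading to the chain
\[
pL_{0,W_0}\subset^1 pM^\perp\subset^1 L_{2,W_0}\subset^1 M\subset^1 L_{0,W_0}
\]
are correct, and your treatment of the superspecial points via the induced $\sigma^2$-semilinear structure on $V_{L_{0,2}}$ agrees with the paper. (Small slip: $L_0/L_2$ is a two-dimensional $\FF_{p^2}$-space, not $\FF_p$, since $C$ is a $\QQ_{p^2}$-vector space in this setting.)

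The real gap is in the inverse construction. You correctly flag Dieudonn\'e stability $VM_\ell\subset M_\ell$ as the crux, but your proposed remedy --- insert a type-$1$ vertex lattice $L_1$ with $L_2\subset L_1\subset L_0$ and ``transport'' $V$-stability from $L_{1,W_0}$ --- is left unjustified and does not go through as stated. First, it is not automatic that $L_{1,W_0}$ lies in $\calM(\FF)$: being a $\tau$-stable lattice of type $1$ guarantees $FL_{1,W_0}=VL_{1,W_0}$, but not that $VL_{1,W_0}\subset L_{1,W_0}$. Second, and more seriously, even granting $VL_{1,W_0}\subset L_{1,W_0}$, this has colength $2$ whereas $L_2\subset^1 L_1$, so one cannot conclude $VL_{1,W_0}\subset L_{2,W_0}$, and nothing forces $VM_\ell\subset M_\ell$ for an arbitrary line $\ell$. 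The paper's proof uses the sharper identity $VL_{0,W_0}=L_{2,W_0}$, extracted from the proof of Proposition~\ref{crucial_lemma_split}: if $\calM_{L_0,L_2}(\FF)$ contains a non-$\tau$-stable $M$ then an index count forces $L_{0,W_0}=M+\tau(M)$ and $L_{2,W_0}=M\cap\tau(M)=FM+VM$, and that proof shows $L^+(M)=V^{-1}L^-(M)$, i.e.\ $VL_{0,W_0}=L_{2,W_0}$. With this identity in hand, $VM_\ell\subset VL_{0,W_0}=L_{2,W_0}\subset M_\ell$ and $pM_\ell\subset pL_{0,W_0}=VL_{2,W_0}\subset VM_\ell$ are immediate, which is exactly what your argument is missing. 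Your write-up should replace the ``transport'' heuristic with this identity (and the reference to Proposition~\ref{crucial_lemma_split}'s proof) to close the gap.
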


\begin{proof}
We define a map $f: \calM_{L_{0}, L_{2}}(\FF)\rightarrow \PP^{1}(V_{L_{0,2}})(\FF)$ by sending $M\in \calM_{L_{0}, L_{2}}(\FF)$ to $M/L_{2,W_{0}}\subset L_{0, W_{0}}/L_{2,W_{0}}$. Conversely let $m\subset V_{L_{0,2}}=L_{0,W_{0}}/L_{2,W_{0}}$ be a point in $\PP^{1}(V_{L_{0,2}})$ and denote by $M\subset L_{0,W_{0}}$ the preimage of $m$ under the natural reduction map. By the proof of Proposition \ref{crucial_lemma_split}, we have $L_{2,W_{0}}=VL_{0,W_{0}}$. Then we have $VM\subset VL_{0,W_{0}}=L_{2,W_{0}}\subset M $ and $pM\subset pL_{0,W_{0}}=V^{2}L_{0,W_{0}}=VL_{2, W_{0}}\subset VM$. Hence $M$ is a Dieudonn\'{e} module. It is clear these two maps are inverse to each other and we have a bijection. It is also clear from the construction that the superspecial points are the $\FF_{p^{2}}$-points. 
\end{proof}

\begin{lemma}\label{L0_L2_intersection_split}
Let $L_{1}$ be a vertex lattice of type $1$. Then we can find a vertex lattice $L_{0}$ of type $0$ containing $L_{1}$ and a vertex lattice $L_{2}$ of type $2$ contained in $L_{1}$. 
\end{lemma}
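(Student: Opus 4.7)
The plan is to mirror the argument of Lemma~\ref{L0_L2_intersection}, exploiting the fact that for a vertex lattice of type $1$ the quotients $L_{1}^{\perp}/L_{1}$ and $L_{1}/pL_{1}^{\perp}$ are both 2-dimensional symplectic $\FF_{p}$-vector spaces, inside which every line is its own perpendicular.

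First I would unpack the definition: since $L_{1}$ is of type $1$ we have $pL_{1}^{\perp}\subset^{2}L_{1}\subset^{2}L_{1}^{\perp}$, and the alternating form on $C$ descends to a non-degenerate alternating form on the 2-dimensional quotient $L_{1}^{\perp}/L_{1}$. I would then choose any line $\bar{l}\subset L_{1}^{\perp}/L_{1}$ and let $L_{0}$ be its preimage in $L_{1}^{\perp}$, so that $L_{1}\subset^{1}L_{0}\subset^{1}L_{1}^{\perp}$. The only real content is to verify $L_{0}=L_{0}^{\perp}$; since $L_{1}\subset L_{0}\subset L_{1}^{\perp}$ we also have $L_{1}\subset L_{0}^{\perp}\subset L_{1}^{\perp}$, and the image of $L_{0}^{\perp}$ in $L_{1}^{\perp}/L_{1}$ is exactly $\bar{l}^{\perp}$. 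Because the ambient form is alternating we have $\bar{l}\subset\bar{l}^{\perp}$, and a dimension count in the 2-dimensional non-degenerate alternating space forces $\bar{l}=\bar{l}^{\perp}$. Thus $L_{0}=L_{0}^{\perp}$ and $L_{0}$ is a vertex lattice of type $0$.

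For the type $2$ assertion I would perform the dual construction: rescaling the form by $p^{-1}$ exhibits $L_{1}/pL_{1}^{\perp}$ as a 2-dimensional non-degenerate alternating space over $\FF_{p}$. Picking any line $\bar{m}\subset L_{1}/pL_{1}^{\perp}$ and taking its preimage $L_{2}\subset L_{1}$ yields $pL_{1}^{\perp}\subset^{1}L_{2}\subset^{1}L_{1}$, and the same self-perpendicularity argument gives $L_{2}=pL_{2}^{\perp}$, i.e.\ $L_{2}$ is a vertex lattice of type $2$, clearly contained in $L_{1}$.

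There is no real obstacle here: everything reduces to the trivial fact that every line in a 2-dimensional symplectic $\FF_{p}$-space is its own perpendicular, plus some careful bookkeeping of the induced forms on $L_{1}^{\perp}/L_{1}$ and $L_{1}/pL_{1}^{\perp}$ (with the appropriate $p^{-1}$ or $p$ rescaling so that both pairings land in $\FF_{p}$). The one thing worth stating cleanly is the non-degeneracy of these descended forms, which follows from the index constraints $pL_{1}^{\perp}\subset^{2}L_{1}\subset^{2}L_{1}^{\perp}$ defining type $1$.
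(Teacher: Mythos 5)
Your proposal is correct and is exactly the argument the paper gives: the paper's proof of this lemma simply refers back to Lemma~\ref{L0_L2_intersection}, whose proof chooses a line $\bar{l}$ in the $2$-dimensional symplectic space $L_1^{\perp}/L_1$, lifts it to $L_0$, and dually for $L_2$ inside $L_1/pL_1^{\perp}$. You have merely filled in the "one verifies immediately" step with the observation that every line in a $2$-dimensional symplectic space is Lagrangian, which is precisely the content the paper leaves implicit.
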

\begin{proof}
This is proved the same way as Lemma \ref{L0_L2_intersection}  and we state it again for the reader's convenience. 
\end{proof}

The lemma and its proof shows that any superspecial point is contained in a lattice stratum of the form $\calM_{(L_{0}, L_{2})}$ for a vertex lattice $(L_{0}, L_{2})$ of type $\{0,2\}$. Let $(L_{0}, L_{2})$ and $(L^{\prime}_{0}, L^{\prime}_{2})$ be two vertex lattices of type $\{0, 2\}$. We study the set theoretic intersection between $\calM_{L_{0}, L_{2}}(\FF)$ and $\calM_{L^{\prime}_{0}, L^{\prime}_{2}}(\FF)$.

\begin{lemma}\label{L1inL02}
Suppose that the intersection $\calM_{L_{0}, L_{2}}(\FF)\cap \calM_{L^{\prime}_{0}, L^{\prime}_{2}}(\FF)$ is non-empty. Then $\calM_{L_{0}, L_{2}}(\FF)\cap \calM_{L^{\prime}_{0}, L^{\prime}_{2}}(\FF)$ consists of a superspecial point.
\end{lemma}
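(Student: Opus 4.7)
The plan is to proceed in two stages: first, to show every point of the intersection is superspecial; then, to show such a point is unique. I assume throughout that $(L_0, L_2) \neq (L'_0, L'_2)$, since otherwise the two strata coincide and the claim is vacuous.

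For the first stage, let $M \in \calM_{L_0, L_2}(\FF) \cap \calM_{L'_0, L'_2}(\FF)$, so that $L_{2, W_0}, L'_{2, W_0} \subset M \subset L_{0, W_0} \cap L'_{0, W_0}$. Suppose for contradiction that $M$ is not $\tau$-stable. Proposition \ref{crucial_lemma_split} then produces $\tau$-stable lattices $L^+(M) = M + \tau(M)$ of type $0$ and $L^-(M) = M \cap \tau(M)$ of type $2$. Since $L_{2, W_0}$ and $L_{0, W_0}$ are themselves $\tau$-stable, the inclusions $L_{2, W_0} \subset M \subset L_{0, W_0}$ upgrade to $L_{2, W_0} \subset L^-(M)$ and $L^+(M) \subset L_{0, W_0}$. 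Two vertex lattices of the same type related by inclusion must coincide, since the type pins down the covolume via the conditions $L = L^\perp$ and $L = pL^\perp$. Hence $L^-(M) = L_{2, W_0}$ and $L^+(M) = L_{0, W_0}$. The identical reasoning applied to $(L'_0, L'_2)$ gives $L^-(M) = L'_{2, W_0}$ and $L^+(M) = L'_{0, W_0}$, forcing $(L_0, L_2) = (L'_0, L'_2)$, a contradiction. So $M$ is $\tau$-stable, i.e. superspecial, and $L_1 := M^{\tau=1}$ is a type $1$ vertex lattice with $L_2 + L'_2 \subset L_1 \subset L_0 \cap L'_0$.

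For the second stage, I argue that $L_1$ is uniquely determined by the data. If $L_0 \neq L'_0$, then $L_0 \cap L'_0 \subsetneq L_0$; indeed, were $L_0 \subset L'_0$, the self-dualities $L_0 = L_0^\perp$ and $L'_0 = (L'_0)^\perp$ would give $L_0 = L'_0$. Combining $L_1 \subset L_0 \cap L'_0 \subsetneq L_0$ with $[L_0 : L_1] = p$ forces $L_1 = L_0 \cap L'_0$. If instead $L_0 = L'_0$, then necessarily $L_2 \neq L'_2$, and the analogous index calculation using $L_2 + L'_2 \supsetneq L_2$ and $[L_1 : L_2] = p$ gives $L_1 = L_2 + L'_2$. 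Either way $L_1$, and hence the superspecial point $M = L_{1, W_0}$, is uniquely determined.

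The only real bookkeeping lies in the index counts of the second stage, which follow directly from the dictionary between vertex types and lattice indices recalled at the start of Section $6$. The conceptual heart of the argument — that a non-$\tau$-stable Dieudonn\'e module reconstructs its framing pair $(L_0, L_2)$ via the $\tau$-stable hulls $(L^-(M), L^+(M))$ — is already embedded in Proposition \ref{crucial_lemma_split}, so no new local structure theory is needed.
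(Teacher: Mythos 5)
Your proof is correct, and it is worth noting that you make explicit the hypothesis $(L_0, L_2) \neq (L'_0, L'_2)$, which the paper leaves implicit (and without which the statement fails, since $\calM_{L_0,L_2}(\FF) \cong \PP^1(\FF)$). The paper's proof is shorter and more direct: from the sandwich $L_{2,W_0}\subset^1 M\subset^1 L_{0,W_0}$ and $L'_{2,W_0}\subset^1 M\subset^1 L'_{0,W_0}$, the index count immediately gives $M = L_{0,W_0}\cap L'_{0,W_0} = L_{2,W_0}+L'_{2,W_0}$, which is simultaneously $\tau$-stable (being an intersection of $\tau$-stable lattices) and uniquely determined. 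Your argument reaches the same place but routes the superspeciality claim through a contradiction: you invoke the hulls $L^\pm(M)$ from Proposition \ref{crucial_lemma_split} and show that a non-$\tau$-stable $M$ would reconstruct its framing pair as $(L^+(M), L^-(M))$, forcing $(L_0,L_2)=(L'_0,L'_2)$; you then rerun the index argument separately for uniqueness. The underlying facts are the same — "a vertex lattice of a given type containing (resp.\ contained in) $M$ with index one is unique" is precisely what both the $L^\pm$-coincidence step and the final index count rest on — so the content is essentially that of the paper, just factored into two passes instead of one. Your detour does have the minor virtue of making explicit that a non-superspecial point of $\calM_{L_0,L_2}$ determines its stratum, a fact the paper uses but does not isolate. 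One small streamlining you could make: since $L_{2,W_0}=VL_{0,W_0}$ whenever $\calM_{L_0,L_2}(\FF)$ is non-empty (from the proof of Proposition \ref{set-bruhat-tits}), the conditions $L_0\neq L'_0$ and $L_2\neq L'_2$ are equivalent, so your case split in the second stage collapses.
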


\begin{proof}
Given $M\in \calM_{L_{0}, L_{2}}(\FF)\cap \calM_{L^{\prime}_{0}, L^{\prime}_{2}}(\FF)$, we have $L_{0,W_{0}}\subset M\subset L_{2,W_{0}}$ and $L^{\prime}_{0,W_{0}}\subset M\subset L^{\prime}_{2,W_{0}}$. By considering the index, $L_{0,W_{0}}+ L^{\prime}_{0,W_{0}}=M$ and $L^{\prime}_{2,W_{0}}\cap L^{\prime}_{2,W_{0}}=M$. This forces $M$ to be $\tau$-stable and hence superspecial.

\end{proof}

\subsection{The isogeny trick} We are now ready to translate the set theoretic results in the previous sections to the scheme theoretic setting.  We will again rely on the isogeny trick used in Section \ref{Isogeny-trick}. Let $(L_{0}, L_{2})$ be a vertex lattice of type $\{0,2\}$ such that $\calM_{L_{0},L_{2}}(\FF)$ is non-empty. 

\begin{lemma}
The vertex lattices $L_{0}$ and $L_{2}$ give rise to $p$-divisible groups $\XX_{L_{0}}$ and $\XX_{L_{2}}$. Moreover there is a commutative diagram:
$$
\begin{tikzcd}
 \XX_{L_{2}} \arrow[r] \arrow[d,  "\rho_{L_{2}}"]
    & \XX^{\vee}_{L_{0}} \\
   \XX \arrow[r,  "\lambda_{\XX}"] & \XX^{\vee}  \arrow[u,  "\rho^{\vee}_{L_{0}}"] .
\end{tikzcd}$$ 
\end{lemma}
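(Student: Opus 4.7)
The plan is to mirror the corresponding construction in Section \ref{Isogeny-trick} for the quaternionic case. The argument divides into two parts: showing that $L_{0,W_{0}}$ and $L_{2,W_{0}}$ are Dieudonn\'{e} lattices, and then exhibiting the top horizontal arrow and verifying that the square commutes.

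First, I would observe that on $C=N^{\tau=1}$ we have $F=V=\Pi$, and because the isocrystal $N$ is isoclinic of slope $1/2$, the relation $FV=p$ forces $\Pi^{2}=p$ on $C$. Vertex lattices for $\GU_{B_{p}}(2)$ are by construction $\calO_{B_{p}}$-stable, so $L_{0}$ and $L_{2}$ are $\Pi$-stable, and after base change to $W_{0}$ we obtain, for $i=0,2$, the chain
\begin{equation*}
pL_{i,W_{0}}=\Pi^{2}L_{i,W_{0}}\subset \Pi L_{i,W_{0}}=VL_{i,W_{0}}\subset L_{i,W_{0}}.
\end{equation*}
This is precisely the Dieudonn\'{e} condition, so $L_{i,W_{0}}$ defines a $p$-divisible group $\XX_{L_{i}}$, and the quasi-isogeny $\rho_{L_{i}}\colon\XX_{L_{i}}\to\XX$ comes from the inclusion $L_{i,W_{0}}\subset N$ of lattices in the common rational Dieudonn\'{e} module.

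Next, for the diagram, I would exploit that $L_{0}$ is a type $0$ vertex, hence $L_{0}=L_{0}^{\perp}$, so $L_{0,W_{0}}$ is self-dual for $(\cdot,\cdot)$. This yields a canonical principal polarization $\XX_{L_{0}}\xrightarrow{\sim}\XX^{\vee}_{L_{0}}$ and identifies the Dieudonn\'{e} lattice of $\XX^{\vee}_{L_{0}}$ inside $N$ with $L_{0,W_{0}}^{\vee}=L_{0,W_{0}}$. Dualizing $\rho_{L_{0}}$ therefore gives a quasi-isogeny $\rho^{\vee}_{L_{0}}\colon\XX^{\vee}\to\XX^{\vee}_{L_{0}}$ corresponding, at the level of lattices in $N$, to the inclusion $M_{\XX}^{\vee}\hookrightarrow L_{0,W_{0}}$. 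The hypothesis $\calM_{L_{0},L_{2}}(\FF)\neq\varnothing$ gives $L_{2}\subset L_{0}$, so I would define the top arrow to be the isogeny $\XX_{L_{2}}\to\XX^{\vee}_{L_{0}}$ induced by the inclusion $L_{2,W_{0}}\subset L_{0,W_{0}}=L_{0,W_{0}}^{\vee}$.

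Commutativity is then a tautology at the level of rational Dieudonn\'{e} lattices inside $N$: both compositions send $L_{2,W_{0}}$ into $L_{0,W_{0}}^{\vee}$ by the same inclusion, the one going through $\lambda_{\XX}$ factoring as $L_{2,W_{0}}\subset M_{\XX}\subset M_{\XX}^{\vee}\subset L_{0,W_{0}}^{\vee}$ and the direct one being $L_{2,W_{0}}\subset L_{0,W_{0}}^{\vee}$. The point where I expect to have to be most careful is in the sign and twist conventions that make the canonical isomorphism $\XX^{\vee}_{L_{0}}\cong\XX_{L_{0}}$ coming from self-duality compatible with $\rho^{\vee}_{L_{0}}$; once that bookkeeping is set up cleanly in covariant Dieudonn\'{e} theory, the rest of the verification is formal.
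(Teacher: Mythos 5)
Your argument for the commutative diagram (self-duality of $L_{0,W_0}$, dualizing $\rho_{L_0}$, and matching both compositions at the level of lattices inside $N$) is fine and parallels what the paper does. The problem is in the first half, where you claim that $L_0$ and $L_2$ are $\Pi$-stable \emph{by construction}. This is not supported by the paper's definitions. A vertex lattice of type $0$ (or $2$) in this section is defined as a $\ZZ_{p^2}$-lattice $L$ in $C$ with the stated index conditions under $\perp$, via the identification of $\GU_{B_p}(2)$ over $\QQ_{p^2}$ with $\GSp(4)$ over $\QQ_{p^2}$; and a ``vertex lattice of type $\{0,2\}$'' is explicitly \emph{just} a pair $(L_0,L_2)$ of such lattices. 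Neither $L_0$ nor $L_2$ is required to be stable under the $\sigma$-semilinear operator $\Pi$. For a generic $g\in\GSp(4)(\QQ_{p^2})$, the self-dual lattice $gL_0$ is a perfectly good type-$0$ vertex, but $\Pi(gL_0)=\Pi(g)\Pi(L_0)$ has no reason to lie in $gL_0$. So the chain $pL_{i,W_0}=\Pi^2 L_{i,W_0}\subset \Pi L_{i,W_0}\subset L_{i,W_0}$ is not available at the outset.

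The missing ingredient is the standing hypothesis that $\calM_{L_0,L_2}(\FF)\neq\varnothing$, which you invoke only at the end to get $L_2\subset L_0$. In fact this hypothesis is doing all the real work in establishing the Dieudonn\'e condition. By the proof of Proposition \ref{crucial_lemma_split}, non-emptiness forces $L^{-}(M)=FM+VM$ and $L^{+}(M)=V^{-1}L^{-}(M)$, whence $L_{2,W_0}=VL_{0,W_0}$, i.e., $\Pi L_0 = L_2$ inside $C$. Combined with $L_0=L_0^{\perp}$ and $pL_2^{\perp}=L_2$ (with $L_2\subset L_0=L_0^{\perp}\subset L_2^{\perp}$), this gives
\begin{equation*}
pL_{0,W_0}=pL_{0,W_0}^{\perp}\subset pL_{2,W_0}^{\perp}=L_{2,W_0}=VL_{0,W_0}\subset L_{0,W_0},
\end{equation*}
and using $\tau$-stability of $L_0$ (so $V^2 L_{0,W_0}=pL_{0,W_0}$) one then gets $pL_{2,W_0}\subset VL_{2,W_0}\subset L_{2,W_0}$ as well. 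In particular the $\Pi$-stability you assumed is a \emph{consequence} of non-emptiness, not a built-in feature of vertex lattices, and the relation $L_{2,W_0}=VL_{0,W_0}$ is the precise fact that drives both the Dieudonn\'e condition and the later identification with $\PP^1(V_{L_{0,2}})$. You should replace your appeal to ``$\calO_{B_p}$-stability by construction'' with this derivation; the rest of your argument then goes through.
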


\begin{proof}
Since $\calM_{L_{0},L_{2}}(\FF)$ is non-empty, we have $VL_{0, W_{0}}=L_{2, W_{0}}$ and a chain 
\begin{equation}\label{chainL0_L2}
pM^{\perp}\subset pL^{\perp}_{2,W_{0}}\subset^{0} L_{2,W_{0}}\subset M\subset L_{0,W_{0}}\subset^{0} L^{\perp}_{0,W_{0}}\subset M^{\perp}\subset L^{\perp}_{2, W_{0}}.
\end{equation}
Therefore $$pL_{0,W_{0}}=pL^{\perp}_{0,W_{0}}\subset pL^{\perp}_{2,W_{0}}=L_{2,W_{0}}=VL_{0,W_{0}}\subset L_{0,W_{0}}$$ and $L_{0}$ is a Dieudonn\'{e} module. Since $L_{0}$ is $\tau$-stable, $V^{2}L_{0,W_{0}}=pL_{0,W_{0}}$. Then $pL_{2,W_{0}}\subset pL_{0,W_{0}}= V^{2}L_{0,W_{0}}=VL_{2,W_{0}}\subset VL_{0,W_{0}}=L_{2,W_{0}}$. The rest of the diagram follows from the chain \eqref{chainL0_L2}.

\end{proof}

Given $(X, \lambda_{X}, \rho_{X})\in \calM(R)$ for a $\FF$-algebra $R$, we define two quasi-isogenies by the following composites

\begin{equation}\label{rhoL0}
\rho_{X,L_{0}} : X \xrightarrow{\rho_{X}} \XX_{R} \xrightarrow{\rho^{-1}_{L_{0}}} \XX_{L_{0},R},
\end{equation}
and
\begin{equation}\label{rhoL2}
\rho_{X,L_{2}} :   \XX_{L_{2}, R}\xrightarrow{\rho_{L_{2}}}  \XX_{R} \xrightarrow{\rho^{-1}_{X}} X.
\end{equation}

We define the subfunctor $\calM_{L_{0},L_{2}}$ of $\calM$ over $\FF$ by classifying those $(X, \lambda_{X}, \rho_{X})$ such that $\rho_{X,L_{0}}$ and $\rho_{X,L_{2}}$ are both actual isogenies. 

\begin{proposition}
The functor $\calM_{L_{0}, L_{2}}$ is representable by a closed subscheme of $\calM$ and  $\calM_{L_{0}, L_{2}}$ is projective.
\end{proposition}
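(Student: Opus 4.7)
The proof proposal follows the same template as Proposition \ref{M0proj}, so the overall strategy is to (i) identify $\calM_{L_0,L_2}$ as the intersection of two closed subfunctors cut out by ``integrality of a quasi-isogeny'' conditions, and (ii) show it is bounded so that we can apply the projectivity criterion of Rapoport--Zink.

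\textbf{Representability by a closed subscheme.} First I would apply \cite[Proposition 2.9]{RZ-Aoms}: for any quasi-isogeny $\rho$ between two $p$-divisible groups over a base $S\in\Nilp$, the locus of $S$ where $\rho$ is an actual isogeny is a closed formal subscheme of $S$. Applying this to $\rho_{X,L_{0}}$ as defined in \eqref{rhoL0} cuts out a closed subfunctor $\calM_{L_{0}}\subset \calM$, and applying it to $\rho_{X,L_{2}}$ as in \eqref{rhoL2} cuts out $\calM_{L_{2}}\subset \calM$. Since $\calM_{L_{0},L_{2}}$ is by definition their scheme-theoretic intersection, it is representable by a closed subscheme of $\calM$.

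\textbf{Projectivity.} For this, I would appeal to the boundedness criterion \cite[2.30]{RZ-Aoms} together with \cite[Corollary 2.29]{RZ-Aoms}. The key input is that on $\calM_{L_{0},L_{2}}$ we have constructed a factorization
\[
\XX_{L_{2},R} \xrightarrow{\rho_{X,L_{2}}} X \xrightarrow{\rho_{X,L_{0}}} \XX_{L_{0},R},
\]
both arrows of which are genuine isogenies whose composite $\rho_{L_{0}}\circ\rho_{L_{2}}^{-1}\cdot(\text{identifications})$ is the fixed isogeny between $\XX_{L_{2}}$ and $\XX_{L_{0}}$ coming from the inclusion $L_{2,W_{0}}\subset L_{0,W_{0}}$. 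This bounds the height and denominator of the quasi-isogeny $\rho_{X}$ uniformly in $R$, i.e.\ there exists $N$ (depending only on the chain \eqref{chainL0_L2}) such that $p^{N}\rho_{X}$ and $p^{N}\rho_{X}^{-1}$ are isogenies on all of $\calM_{L_{0},L_{2}}$. Hence $\calM_{L_{0},L_{2}}$ is bounded in the sense of \cite[2.30]{RZ-Aoms}.

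\textbf{Conclusion.} By \cite[Corollary 2.29]{RZ-Aoms}, a closed bounded formal subscheme of $\calN(0)$ whose underlying reduced scheme is of finite type over $\FF$ is projective. Since $\calM_{L_{0},L_{2}}$ is reduced (being a closed subscheme of the reduced scheme $\calM$) and locally of finite type over $\FF$, it follows that $\calM_{L_{0},L_{2}}$ is projective over $\FF$. I do not anticipate a serious obstacle: the only subtle point is verifying the uniform boundedness of $\rho_{X}$ on $\calM_{L_{0},L_{2}}$, but this is immediate from the chain \eqref{chainL0_L2} since $L_{0}$ and $L_{2}$ are fixed and the composite $\XX_{L_{2}}\to \XX_{L_{0}}$ is a fixed isogeny between fixed $p$-divisible groups over $\FF$.
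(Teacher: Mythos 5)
Your proof is correct and follows essentially the same route as the paper: closedness via \cite[Proposition 2.9]{RZ-Aoms} applied to the two quasi-isogenies $\rho_{X,L_0}$ and $\rho_{X,L_2}$, boundedness via the isogeny factorization $\XX_{L_2,R}\to X\to \XX_{L_0,R}$ in the sense of \cite[2.30]{RZ-Aoms}, and projectivity from \cite[Corollary 2.29]{RZ-Aoms}. The only difference is that you spell out the boundedness argument and the presentation of the intersection of the two closed conditions in slightly more detail than the paper does.
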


\begin{proof}
First of all, $\calM_{L_{0},L_{2}}$ is a closed subscheme by \cite[Proposition 2.9]{RZ-Aoms}. Moreover $\calM_{L_{0},L_{2}}$ is bounded in the sense of \cite[2.30]{RZ-Aoms}. This follows from the fact that we have isogenies
$$\XX_{L_{2}, R} \xrightarrow{\rho_{X,L_{2}}} X  \xrightarrow{\rho_{X,L_{0}}}\XX_{L_{0}, R}.$$
Hence $\calM_{L_{0},L_{2}}$ is a closed subscheme of a projective scheme which means it is projective itself by using \cite[Corollary 2.29]{RZ-Aoms}. \end{proof}

We define a map $f: \calM_{L_{0},L_{2}}\rightarrow \PP^{1}(V_{L_{0,2}})$ by sending $(X,\rho_{X}, \lambda_{X})$ to $$M/ L_{2, R}\subset L_{0,R}/L_{2, R}$$ for any $\FF$-algebra $R$ and where $M=\DD(X)(R)$ is the covariant Dieudonn\`{e} crystal of $X$ evaluated at $R$. 

\begin{proposition}
The map $f: \calM_{L_{0},L_{2}}\rightarrow \PP^{1}(V_{L_{0,2}})$ is an isomorphism.
\end{proposition}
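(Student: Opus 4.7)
The plan is to mirror the argument used for Proposition \ref{isom0} in the quaternionic case. First, I would verify that $f$ is well-defined as a morphism of schemes: for any $\FF$-algebra $R$ and any $(X,\lambda_{X},\rho_{X}) \in \calM_{L_{0},L_{2}}(R)$, the two isogenies $\rho_{X,L_{2}}$ and $\rho_{X,L_{0}}$ realize $M = \DD(X)(R)$ as an $R$-submodule of $L_{0,R}$ containing $L_{2,R}$, and the quotient $M/L_{2,R}$ is locally a direct summand of rank one in $L_{0,R}/L_{2,R} = V_{L_{0,2}} \otimes_{\FF} R$ by a rank/flatness check on the isogeny cokernels. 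This produces the morphism to $\PP^{1}(V_{L_{0,2}})$.

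Next, I would establish that $f$ is bijective on $k$-points for every field extension $k/\FF$. The set-theoretic argument of Proposition \ref{set-bruhat-tits} carries over verbatim once Dieudonn\'{e} modules are replaced by Zink's windows of displays (as in the proof of Proposition \ref{isom0}): the key input is the identity $VL_{0,W_{0}} = L_{2,W_{0}}$, which guarantees that the preimage in $L_{0,R}$ of any line $m \subset V_{L_{0,2}} \otimes_{\FF} R$ is automatically a display, so $f$ has an inverse on $k$-points.

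Finally, I would conclude via Zariski's main theorem. The target $\PP^{1}(V_{L_{0,2}})$ is smooth and projective, hence normal and separated. The source $\calM_{L_{0},L_{2}}$ is projective (already shown), so $f$ is proper. A proper morphism with universally injective, universally bijective fibers over a normal target is a bijection on points and is quasi-finite; Zariski's main theorem then upgrades it to an isomorphism.

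The main obstacle, as in the quaternionic case, is the bijectivity on $R$-points (or at least $k$-points for all extensions $k/\FF$), since Dieudonn\'{e} theory by itself only handles $\FF$-points. The substantive technical step is to phrase the inverse map in terms of displays so that one can construct, functorially in $R$, a display inside $L_{0,R}$ from a line in $V_{L_{0,2}} \otimes R$ and verify that it satisfies the required polarization and height conditions. Once this functorial inverse is in place on the level of displays, normality of $\PP^{1}$ together with properness of $f$ makes the passage from a pointwise bijection to a scheme-theoretic isomorphism essentially automatic.
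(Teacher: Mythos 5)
Your proposal matches the paper's proof: both establish bijectivity on field-valued points by replacing Dieudonn\'{e} modules with Zink's windows of displays, exactly as in the set-theoretic argument of Proposition \ref{set-bruhat-tits}, and then conclude by Zariski's main theorem using projectivity of $\calM_{L_{0},L_{2}}$ and normality of $\PP^{1}(V_{L_{0,2}})$. The additional remarks you make about well-definedness as a morphism and about constructing a functorial inverse on $R$-points via displays are consistent with (and slightly more detailed than) the paper's brief argument.
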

\begin{proof}
Using the theory of windows of display \cite{Zink-pro99} in place of Dieudonn\'{e} modules, one can show $f$ is bijective on $k$-points for any field extension $k$ over $\FF$ following the same proof as in Proposition \ref{set-bruhat-tits}.  Since $\PP^{1}(V_{L_{0}, L_{2}})$ is normal and $\calM_{L_{0},L_{2}}$ is a projective scheme, we can apply Zariski's main theorem to show $f$ is an isomorphism.
\end{proof}

We set 
\begin{equation}\label{M0L0L2}\calM^{\circ}_{L_{0}, L_{2}}=\calM_{L_{0},L_{2}}- \calM_{\{1\}}\end{equation} and \begin{equation}\label{ML0L21}\calM_{L_{0},L_{2},\{1\}}=\calM_{L_{0},L_{2}}\cap \calM_{\{1\}}.\end{equation} 
The scheme $\calM_{L_{0}, L_{2}}$ admits the following decomposition called the Ekedahl-Oort stratification
\begin{equation}
\calM_{L_{0}, L_{2}}=\calM^{\circ}_{L_{0}, L_{2}}\sqcup \calM_{L_{0},L_{2},\{1\}}.
\end{equation}

\begin{corollary}\label{bruhat-tits-strat}
The set of superspecial points on $\calM_{L_{0}, L_{2}}$ is precisely the stratum $\calM_{L_{0},L_{2},\{1\}}$. The isomorphism $f$ respects the Ekedahl-Oort stratification in the sense that $\calM_{L_{0},L_{2},\{1\}}$ corresponds to the $\FF_{p^{2}}$-points on $\PP^{1}$ and $\calM^{\circ}_{L_{0}, L_{2}}$ corresponds to the complement of $\FF_{p^{2}}$-points on $\PP^{1}$. 
\end{corollary}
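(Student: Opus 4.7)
The plan is to read this corollary as a direct consequence of the isomorphism $f\colon \calM_{L_{0},L_{2}}\xrightarrow{\sim} \PP^{1}(V_{L_{0,2}})$ together with the set-theoretic identification already obtained in Proposition \ref{set-bruhat-tits}. Since both sides of $f$ are projective reduced schemes, any statement about closed subsets can be checked on $\FF$-points, so the work is really bookkeeping.

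First I would establish the set-theoretic identification of $\calM_{L_{0},L_{2},\{1\}}(\FF)$ with the $\FF_{p^{2}}$-points of $\PP^{1}(V_{L_{0,2}})$. By definition $\calM_{L_{0},L_{2},\{1\}}=\calM_{L_{0},L_{2}}\cap\calM_{\{1\}}$, and by the lemma preceding this section $\calM_{\{1\}}(\FF)$ is exactly the set of superspecial points in $\calM(\FF)$. Thus an $\FF$-point $M$ of $\calM_{L_{0},L_{2}}$ lies in $\calM_{L_{0},L_{2},\{1\}}$ iff $M$ is superspecial iff $M$ is $\tau$-stable. Under the explicit bijection $M\mapsto M/L_{2,W_{0}}$ of Proposition \ref{set-bruhat-tits}, the $\tau$-stability of $M$ is equivalent to the $\tau$-stability of the line $M/L_{2,W_{0}}\subset V_{L_{0,2}}$. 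Since $L_{0}$ and $L_{2}$ are $\tau$-stable, $\tau$ (a $\sigma^{2}$-linear operator with $\tau=1$ on $C$) descends to the Frobenius on $V_{L_{0,2}}=L_{0,W_{0}}/L_{2,W_{0}}$ relative to the $\FF_{p^{2}}$-structure coming from $L_{0}/L_{2}\subset C/pC$. The $\tau$-fixed lines in $V_{L_{0,2}}$ are therefore exactly the $\FF_{p^{2}}$-rational points of $\PP^{1}(V_{L_{0,2}})$. This already notes the observation on superspecial points made in Proposition \ref{set-bruhat-tits}.

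Second, I would upgrade this to schemes. Because $f$ is an isomorphism of reduced projective $\FF$-schemes, the image $f(\calM_{L_{0},L_{2},\{1\}})$ is a reduced closed subscheme of $\PP^{1}(V_{L_{0,2}})$ whose $\FF$-points are exactly the finitely many $\FF_{p^{2}}$-points identified above; hence $f$ restricts to an isomorphism of $\calM_{L_{0},L_{2},\{1\}}$ onto the reduced closed subscheme $\PP^{1}(V_{L_{0,2}})(\FF_{p^{2}})$. Taking complements, $f$ also identifies $\calM^{\circ}_{L_{0},L_{2}}=\calM_{L_{0},L_{2}}\setminus\calM_{\{1\}}$ with $\PP^{1}(V_{L_{0,2}})\setminus\PP^{1}(V_{L_{0,2}})(\FF_{p^{2}})$, which finishes the Ekedahl–Oort compatibility statement.

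The main (very mild) obstacle is to justify that ``$\tau$-stable line in $V_{L_{0,2}}$'' really means ``$\FF_{p^{2}}$-rational''; concretely, one must check that $\tau$ preserves $L_{0,W_{0}}$ and $L_{2,W_{0}}=VL_{0,W_{0}}$ (the latter equality was established inside the proof of Proposition \ref{crucial_lemma_split}) and that the induced operator on $V_{L_{0,2}}$ coincides, under a choice of $\FF_{p^{2}}$-basis of $L_{0}/L_{2}$, with the standard Frobenius. Once this is in place, the rest of the argument is formal from Zariski's main theorem (already used to prove $f$ is an isomorphism) and the lemma identifying $\calM_{\{1\}}(\FF)$ with the superspecial points.
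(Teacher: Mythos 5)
Your proposal is correct and takes essentially the same route as the paper: the paper's proof is the one-line observation that everything can be checked on $\FF$-points and then follows from Proposition \ref{set-bruhat-tits}, which already records that superspecial points correspond to $\FF_{p^2}$-rational points of $\PP^1(V_{L_{0,2}})$. You have merely unpacked that reference into its component steps (superspecial $\Leftrightarrow$ $\tau$-stable, $\tau$-stable line $\Leftrightarrow$ $\FF_{p^2}$-rational, and transfer to schemes via the isomorphism $f$), which is a faithful expansion rather than a different argument.
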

\begin{proof}
The statements can be checked on $\FF$-points and they follow from Proposition \ref{set-bruhat-tits}.
\end{proof}

\subsection{The main result in the paramodular Rapoport-Zink space} We summarize the results obtained from previous sections. We introduce the following Bruhat-Tits strata for $\calM$.  Define $\calM^{\circ}_{\{0,2\}}=\bigcup_{(L_{0}, L_{2})}\calM^{\circ}_{L_{0}, L_{2}}$ where $(L_{0}, L_{2})$ runs through all the vertices of type $\{0,2\}$ and with $\calM^{\circ}_{L_{0}, L_{2}}$ defined in \eqref{M0L0L2}. 

\begin{theorem}\label{main-result-param}
The formal scheme $\calN$ can be written as $\calN=\bigsqcup_{i\in \ZZ} \calN(i)$. The connected components $\calN(i)$ are all isomorphic to $\calN(0)$. We set $\calM=\calN_{red}(0)$ where $\calN_{red}(0)$ is the underlying reduced scheme of $\calN(0)$. 
\begin{enumerate}
\item Then $\calM$ is pure of dimension $2$ and $\calM$ can be decomposed into $$\calM=\calM^{\circ}_{\{0,2\}}\sqcup \calM_{\{1\}}.$$ We call this the Bruhat-Tits stratification of $\calM$.

\item The scheme $\calM^{\circ}_{\{0,2\}}=\bigcup_{(L_{0}, L_{2})}\calM^{\circ}_{L_{0}, L_{2}}$ where $(L_{0}, L_{2})$ runs through all vertex lattices of type $\{0,2\}$. The closure $\calM_{L_{0}, L_{2}}$ of $\calM^{\circ}_{L_{0}, L_{2}}$ is isomorphic to $\PP^{1}$. Moreover it admits a stratification $$\calM_{L_{0}, L_{2}}=\calM^{\circ}_{L_{0}, L_{2}}\sqcup \calM_{L_{0}, L_{2}, \{1\}}$$ called the Ekedhal-Oort stratification for $\calM_{L_{0}, L_{2}}$. The complement $\calM_{L_{0}, L_{2}, \{1\}}$ of $\calM^{\circ}_{L_{0}, L_{2}}$ in $\calM_{L_{0}, L_{2}}$ corresponds to the $\FF_{p^{2}}$-rational points of $\PP^{1}$.

\item The intersection between $\calM_{L_{0}, L_{2}}$ and $\calM_{L^{\prime}_{0}, L^{\prime}_{2}}$ for a vertex lattice $(L_{0}, L_{2})$ of type $\{0,2\}$ and another vertex $(L^{\prime}_{0}, L^{\prime}_{2})$ lattice of type $\{0,2\}$ if nonempty is a superspecial point. 
\end{enumerate}
\end{theorem}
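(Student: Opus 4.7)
The plan is to assemble the theorem from the lemmas and propositions established earlier in the section; no new technical input is required beyond careful bookkeeping. First I would establish the connected component decomposition $\calN = \bigsqcup_{i \in \ZZ} \calN(i)$: the height of the quasi-isogeny $\rho_X$ is locally constant on the base and defines an open-and-closed partition, and the isomorphisms $\calN(i) \cong \calN(0)$ follow by translating via an element of $J_b(\QQ_p) \cong \GU_{B_p}(C)(\QQ_p)$ whose similitude factor has the appropriate $p$-adic valuation. The set-theoretic decomposition $\calM(\FF) = \calM_{\{0,2\}}(\FF) \cup \calM_{\{1\}}(\FF)$ is the content of the lemma following Proposition \ref{crucial_lemma_split}, combined with the remark that identifies the types of the $\tau$-stable lattices $L^\pm(M)$. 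Passing from the $\FF$-point decomposition to the scheme-theoretic decomposition $\calM = \calM^{\circ}_{\{0,2\}} \sqcup \calM_{\{1\}}$ uses that each $\calM_{L_0, L_2}$ is a closed projective subscheme (established via the isogeny trick), while $\calM_{\{1\}}$ is a discrete union of superspecial points. Purity of dimension reduces to knowing the dimensions of the strata in part (2).

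For part (2), the identification $\calM_{L_0, L_2} \cong \PP^1(V_{L_{0,2}})$ is the proposition immediately preceding the theorem, and the Ekedhal-Oort decomposition $\calM_{L_0, L_2} = \calM^\circ_{L_0, L_2} \sqcup \calM_{L_0, L_2, \{1\}}$ together with the characterization of $\calM_{L_0, L_2, \{1\}}$ as the superspecial locus is Corollary \ref{bruhat-tits-strat}. To see that $\calM_{L_0, L_2}$ is the closure of $\calM^\circ_{L_0, L_2}$, note that under the isomorphism $f$ the open stratum corresponds to the complement of a finite set of $\FF_{p^2}$-rational points in $\PP^1$, which is open and dense. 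Taking the union over all vertex lattices of type $\{0,2\}$ gives $\calM^\circ_{\{0,2\}}$ by definition.

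For part (3), this is the scheme-theoretic upgrade of Lemma \ref{L1inL02}. Given $M$ in the $\FF$-point intersection of $\calM_{L_0, L_2}$ and $\calM_{L'_0, L'_2}$, the chains of inclusions $L_{2,W_0} \subset M \subset L_{0,W_0}$ and $L'_{2,W_0} \subset M \subset L'_{0,W_0}$ force $L_{2,W_0} + L'_{2,W_0} \subset M \subset L_{0,W_0} \cap L'_{0,W_0}$; comparing indices collapses both ends onto a single $\tau$-stable lattice, which is therefore superspecial. Since there is no genuine hard step, the main point of care is ensuring that the set-theoretic identifications are already compatible on the scheme-theoretic level — which has been arranged in advance by invoking Zariski's main theorem in the proof that $f : \calM_{L_0, L_2} \to \PP^1(V_{L_{0,2}})$ is an isomorphism, so that all remaining assertions can be checked on $\FF$-points.
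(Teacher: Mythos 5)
Your proposal is correct and follows essentially the same route as the paper's (very terse) proof, which simply cites Lemma \ref{L0_L2_intersection_split}, Corollary \ref{bruhat-tits-strat}, and Lemma \ref{L1inL02} for the three parts; your version fills in the connected-component decomposition and the scheme-theoretic upgrade explicitly, which the paper leaves implicit. One point worth flagging: your stated method — deriving purity of dimension from the dimensions of the strata in part (2) — yields dimension $1$ (the components are $\PP^1$'s), not dimension $2$ as written in the theorem statement; the latter is a typo (compare the subsequent theorem, which correctly asserts $Sh^{ss}_{U^{p}}$ is pure of dimension $1$), and your argument, carried out literally, would have caught it.
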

\begin{proof}
The first statement $(1)$ is seen from Lemma \ref{L0_L2_intersection}. The second statement $(2)$ is proved in Proposition \ref{bruhat-tits-strat}. The third statement $(3)$ is proved in Lemma \ref{L1inL02}.
\end{proof}

\subsection{Application to the supersingular locus}  Let $V$ be a vector space of dimension $4$ over $\QQ$. We assume that $V$ is equipped with a symplectic form $(\cdot,\cdot)$. Then we define $$G(\QQ)=\{g\in \GL(V); (gx, gy)=c(g)(x, y), c(g)\in \QQ^{\times}\}.$$ That is $G=\GSp(4)$. Let $h: \GG_{m}\rightarrow G(\CC)$ be the cocharacter sending $z$ to $\text{diag}(z,z,1,1)$. Moreover $h$ defines a decomposition $V_{\CC}=V_{1}\oplus V_{2}$ where $h(z)$ acts on $V_{1}$ by $z$ and on $V_{2}$ by $\bar{z}$. We fix an open compact subgroup $U=U_{p}U^{p}$ of $G(\AAA_{f})$ and we assume $U^{p}$ is sufficiently small. We also assume that there is a lattice $\Lambda$ in $V$ such that $\Lambda$ is paramodular in the sense that $p\Lambda^{\perp}\subset^{2}\Lambda\subset^{2}\Lambda^{\perp}$. We choose $U_{p}$ that stabilize $\Lambda\otimes \ZZ_{p}$.

To $(V, (\cdot,\cdot), h ,\Lambda)$, we consider the following moduli problem $\gothS h_{U^{p}}$ over $\ZZ_{(p)}$: for a scheme $S$ over $\ZZ_{(p)}$  we associate the set of isomorphism classes of the triple $(A, \lambda, \eta )$ where:
\begin{itemize}
\item[-]  $A$ is an abelian scheme of relative dimension $2$ over $S$;
\item[-]  $\lambda: A\rightarrow A^{\vee}$ is a polarization of degree $p^{2}$;
\item[-] $\eta: V\otimes_{\QQ} {\AAA}^{(p)}_{f} \rightarrow V^{(p)}(A)$ is a $U^{p}$-orbit of  isomorphisms. Here $$V^{p}(A)=\prod_{p^{\prime}\neq p}\Ta_{p^{\prime}}(A)\otimes \AAA_{f}^{(p)}$$ is the prime to $p$-part of the rational Tate module of $A$. \end{itemize}

This moduli problem is representable by a quasi-projective variety $\Sh_{U^{p}}$  over $\ZZ_{(p)}$ which is well-known.
We are interested in the supersingular locus $Sh^{ss}_{U^{p}}$ in $\Sh_{U^{p},\FF}$ which is considered as a closed reduced subscheme. The uniformization theorem of Rapoport-Zink \cite[Theorem 6.1]{RZ-Aoms} translates this problem to problem of describing the corresponding Rapoport-Zink space.

\begin{theorem}\label{uniformization_split}
There is an isomorphism of $\FF$-schemes 
$$Sh^{ss}_{U^{p}}\cong I(\QQ)\backslash \calN_{red}\times  G(\AAA^{p}_{f})/U^{p}.$$
\end{theorem}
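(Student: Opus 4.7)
The plan is to invoke the Rapoport--Zink uniformization theorem \cite[Theorem 6.1]{RZ-Aoms}, which is directly applicable to our setting since the moduli problem $\gothS h_{U^{p}}$ is a Siegel-type PEL moduli problem (with parahoric level at $p$ given by the paramodular lattice $\Lambda\otimes\ZZ_{p}$). First I would fix a base supersingular point $(\bf{A}_{0}, \boldsymbol{\lambda}_{0}, \boldsymbol{\eta}_{0}) \in Sh^{ss}_{U^{p}}(\FF)$ and identify the associated $p$-divisible group $\bf{A}_{0}[p^{\infty}]$ with the framing object $\XX$ used to define $\calN$ in the previous section. That such a base point exists is standard; indeed, the supersingular locus is non-empty since every isogeny class in the moduli problem contains a basic point, and the polarization datum on $\bf{A}_{0}$ is compatible with the polarization $\lambda_{\XX}$ of height $2$ by construction of the paramodular lattice.

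Next I would define the algebraic group $I$ over $\QQ$ whose $\QQ$-points are the quasi-self-isogenies of $\bf{A}_{0}$ preserving $\boldsymbol{\lambda}_{0}$ up to $\QQ^{\times}$-scalar. Standard Honda--Tate theory shows that $I$ is an inner form of $G=\GSp(4)$, with $I(\RR)$ a compact modulo center form and $I(\QQ_{p}) \cong J_{b}(\QQ_{p}) \cong \GU_{B_{p}}(C)(\QQ_{p})$ (by the lemma in the previous section), while $I(\QQ_{\ell})\cong G(\QQ_{\ell})$ for all $\ell\neq p,\infty$. This identification uses the action of $I(\QQ_{p})$ on the isocrystal $N$ of $\XX$ and the corresponding action of $\GU_{B_{p}}(C)$ on $N$ via the factorization $N = C\otimes_{\QQ_{p^{2}}} K_{0}$.

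With these data in hand, I would apply \cite[Theorem 6.1]{RZ-Aoms}: the formal completion of $\gothS h_{U^{p}}\otimes_{\ZZ_{(p)}}W_{0}$ along its supersingular locus is uniformized as
\[ I(\QQ)\backslash \calN \times G(\AAA^{p}_{f})/U^{p}, \]
where the double quotient makes sense because $I(\QQ)$ acts on $\calN$ through its identification with $J_{b}(\QQ_{p})$ and on $G(\AAA^{p}_{f})/U^{p}$ through the natural maps $I(\QQ)\hookrightarrow I(\AAA^{p}_{f}) = G(\AAA^{p}_{f})$. Passing to underlying reduced schemes and to the special fiber over $\FF$ then yields the stated isomorphism, since $\calN_{red}$ (being locally of finite type over $\FF$) identifies with the reduced special fiber of $\calN$.

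The only non-routine point is to justify that the Rapoport--Zink setup covers the paramodular level structure at $p$, but this is ensured because the moduli problem is representable, the level at $p$ is the parahoric stabilizer of the self-dual chain associated to $\Lambda\otimes\ZZ_{p}$, and the local model theory we already invoked (via \cite{Yu-Proc11}) confirms flatness. The action of $I(\QQ)$ on $\calN$ is discrete and the stabilizers are finite (thanks to $U^{p}$ being sufficiently small), so the quotient is well-defined as a scheme. The main subtle step is the compatibility of the prime-to-$p$ level structure $\boldsymbol{\eta}_{0}$ with the rigidification coming from $G(\AAA^{p}_{f})/U^{p}$; this is handled by the standard Kottwitz--type argument that the isogeny class of $\bf{A}_{0}$ has adelic rational Tate module canonically identified with $V\otimes\AAA^{p}_{f}$ via $\boldsymbol{\eta}_{0}$, and this identification is unique up to $I(\QQ)$.
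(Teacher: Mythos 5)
Your proposal takes the same approach as the paper: the theorem is an immediate consequence of the Rapoport--Zink uniformization theorem \cite[Theorem 6.1]{RZ-Aoms}, with $I$ defined as the group of polarization-preserving quasi-self-isogenies of a fixed supersingular base abelian variety, exactly as you describe. The paper offers no argument beyond this citation (it simply refers back to Theorem \ref{uniformization} for the definition of $I$), so your elaboration of the local structure of $I$ and the compatibility of the prime-to-$p$ level structure just fills in standard details of the same argument.
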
 

Here $I$ is an inner form of $G$ such that $I(\QQ_{p})=J_{b}(\QQ_{p})$ defined exactly as we did in the quaternionic unitary case in Theorem \ref{uniformization}. 

\subsection{Main theorem on the supersingular locus} We apply the results obtained previously to describe the supersingular locus of the Siegel threefold with paramodular level structure.
\begin{theorem}
The $\FF$-scheme $Sh^{ss}_{U^{p}}$ is pure of dimension $1$. For $U^{p}$ sufficiently small, the irreducible components are isomorphic to the projective line $\PP^{1}$. The intersection of two irreducible components is a superspecial  point.
\end{theorem}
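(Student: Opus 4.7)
The strategy is the same as for the quaternionic unitary case treated earlier in the paper: combine the Rapoport--Zink uniformization with the structural description of the basic Rapoport--Zink space.

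First, I would invoke Theorem~\ref{uniformization_split} to identify $Sh^{ss}_{U^{p}}$ with the double coset space $I(\QQ)\backslash \calN_{red}\times G(\AAA^{p}_{f})/U^{p}$. Because $\calN = \bigsqcup_{i\in\ZZ}\calN(i)$ with each $\calN(i)\cong \calN(0)$, the underlying reduced scheme $\calN_{red}$ is a disjoint union of copies of $\calM=\calN_{red}(0)$. I would then feed in Theorem~\ref{main-result-param}, which says $\calM$ is one-dimensional, that its irreducible components are precisely the schemes $\calM_{L_{0},L_{2}}\cong \PP^{1}$ indexed by vertex lattices $(L_{0},L_{2})$ of type $\{0,2\}$, and that two distinct irreducible components meet (if at all) in a single superspecial point lying in $\calM_{\{1\}}$.

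Next, I would analyse the $I(\QQ)$-action. Since $I$ is an inner form of $G$ with $I(\QQ_{p})=J_{b}(\QQ_{p})$, the group $I(\QQ)$ acts on $\calN_{red}$ through quasi-isogenies (permuting the components $\calN(i)$ and moving vertex lattices around) and on $G(\AAA^{p}_{f})/U^{p}$ in the obvious way. The key point is the standard observation that for $U^{p}$ sufficiently small the stabiliser in $I(\QQ)$ of any pair (irreducible component of $\calN_{red}$, class in $G(\AAA^{p}_{f})/U^{p}$) acts trivially on the component, because $I(\QQ)$ is discrete in $I(\AAA_{f})$ and the stabilisers of vertex lattices in $I(\QQ_{p})$ are compact; intersecting with a sufficiently small $U^{p}$ kills all nontrivial automorphisms. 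One also needs that no element of $I(\QQ)$ identifies two distinct points of a single $\PP^{1}$, which follows from the same discreteness argument.

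Granting this, the quotient $I(\QQ)\backslash \calN_{red}\times G(\AAA^{p}_{f})/U^{p}$ inherits the local geometry of $\calN_{red}$: every irreducible component maps isomorphically onto its image, so the irreducible components of $Sh^{ss}_{U^{p}}$ are isomorphic to $\PP^{1}$; pure one-dimensionality follows from pure one-dimensionality of $\calM$; and the intersection of two distinct components is either empty or a single superspecial point, inherited from the corresponding intersection in $\calN_{red}$. The main (and only) obstacle is the bookkeeping that justifies taking $U^{p}$ small enough to make the $I(\QQ)$-action sufficiently free on components and their intersections; this is a routine but somewhat delicate verification of the type carried out in the quaternionic case, using only the compactness of stabilisers at $p$ and the discreteness of $I(\QQ)$ in $I(\AAA_{f})$.
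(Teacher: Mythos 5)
Your proposal follows exactly the paper's route: invoke the Rapoport--Zink uniformization (Theorem~\ref{uniformization_split}) and feed in the structural description of $\calM$ from Theorem~\ref{main-result-param}. The paper's own proof is just those two citations; you have additionally spelled out the standard freeness argument for the $I(\QQ)$-action when $U^{p}$ is small, which the paper leaves implicit, but this is elaboration, not a different method.
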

\begin{proof}
Using the uniformization Theorem \ref{uniformization_split}, the result follows from the main theorem for the Rapoport-Zink space Theorem \ref{main-result-param}.
\end{proof}

\section{Comparison with Affine Deligne-Lusztig varieties}

In this final section we would like to point out how our results fit in the results proved in G\"{o}rtz and He \cite{GH-Cam15} in terms of the affine Deligne-Lusztig varieties. In the following we will abbreviate affine Deligne-Lusztig variety as ADLV.
\subsection{Affine Deligne Lusztig variety} Let $F$ be a finite extension of $\QQ_{p}$ and $\breve{F}$ be the completion of the maximal unramified extension of $F$. Let $G$ be a connected reductive group over $F$ and we write $\breve{G}$ its base change to $\breve{F}$. Then $\breve{G}$ is quasi-split and we choose a maximal split torus $S$ and denote by $T$ its centralizer.  We know $T$ is a maximal torus and we denote by $N$ its normalizer. The relative Weyl group is defined to be $W_{0}=N(\breve{F})/ T(\breve{F})$. Let $\Gamma$ be the Galois group of $\breve{F}$ and we have the following Kottwitz homomorphism \cite{Ko97}:
$$\kappa_{G}: G(\breve{F})\rightarrow X_{*}(\breve{G})_{\Gamma}.$$
Denote by $\widetilde{W}$ the Iwahori Weyl group of $\breve{G}$ which is by definition given by $\widetilde{W}=N(\breve{F})/ T(\breve{F})_{1}$ where $T(\breve{F})_{1}$ is the kernel of the Kottwitz homomorphism for $T(\breve{F})$. Let $\breve{\mathfrak{B}}(G)$ be the Bruhat-Tits building of $G$ over $\breve{F}$. The choice of $S$ determines an standard apartment $\breve{\mathfrak{A}}$ which $\widetilde{W}$ acts on by affine transformations. We fix a $\sigma$-invariant alcove $\mathfrak{a}$ and a special vertex of $\mathfrak{a}$. Inside the Iwahori Weyl group $\widetilde{W}$, there is a copy of the affine Weyl group $W_{a}$ which can be identified with $N(\breve{F})\cap G(\breve{F})_{1}/ T(\breve{F})_{1}$ where $G(\breve{F})_{1}$ is the kernel of the Kottwitz morphism for $G$. The group $\widetilde{W}$ is not quite a Coxeter group while $W_{a}$ is generated by the set of simple affine reflections denoted by $\tilde{\mathbb{S}}$ and $(\widetilde{W}, \tilde{\mathbb{S}})$ form a Coxeter system. We in fact have $\widetilde{W}= W_{a}\rtimes \Omega$ where $\Omega$ is the normalizer of a fixed base alcove and more canonically $\Omega=X_{*}(T)_{\Gamma}/ X_{*}(T_{sc})_{\Gamma}$ where $T_{sc}$ is the preimage of $T\cap G_{der}$ in the simply connected cover $G_{sc}$ of $G_{der}$.

Let $\mu\in X_{*}(T)$ be a minuscule cocharacter of $G$ over $\breve{F}$ and $\lambda$ its image in $X_{*}(T)_{\Gamma}$.  We denote by $\tau$ the projection of $\lambda$ in $\Omega$. The \emph{admissible subset} of $\widetilde{W}$ is defined to be
$$\Adm(\mu)=\{w\in \widetilde{W}: w \leq x(\lambda) \text{ for some }x\in W_{0} \}.$$
Here $\lambda$ is considered as a translation element in $\widetilde{W}$. Let $K\subset\tilde{\mathbb{S}}$ and $\breve{K}$ its corresponding parahoric subgroup. Let $\widetilde{W}_{K}$ be the subgroup defined by $(N(\breve{F})\cap \breve{K})/T(\breve{F})_{1}$. We have the following decomposition $\breve{K}\backslash G(\breve{F})/\breve{K}= \widetilde{W}_{K}\backslash \widetilde{W}/ \widetilde{W}_{K}$. Therefore we can define a relative position map by 
$$\inv: G(\breve{F})/\breve{K}\times G(\breve{F})/\breve{K}\rightarrow \widetilde{W}_{K}\backslash \widetilde{W}/ \widetilde{W}_{K}.$$

For $w\in \widetilde{W}_{K}\backslash \widetilde{W}/ \widetilde{W}_{K}$ and $b\in G(\breve{F})$, we define the \emph{affine Deligne-Lusztig variety}
to be the set 
$$X_{w}(b)=\{g\in G(\breve{F})/\breve{K}: \inv(g, b\sigma(g))=w\}.$$
Thanks to the work of \cite{BS-Inv17} and \cite{Zhu-Ann17}, this set can be viewed as an ind-closed-subscheme in the affine flag variety $\breve{G}/\breve{K}$. In this article, we only consider it as a set. The Rapoport-Zink space is not directly related to the affine Deligne-Lusztig variety but rather to the following union of affine Deligne-Lusztig varieties
$$X(\mu, b)_{K}=\{g\in G(\breve{F})/ \breve{K}: g^{-1}b\sigma(g)\in \breve{K}w\breve{K}, w\in \Adm(\mu) \}.$$
We recall that the group $J_{b}$ is defined as the $\sigma$-centralizer of $b$ that is $$J_{b}(R)=\{g\in G(R\otimes_{F}\breve{F}): g^{-1}b\sigma(g)=b\}$$ for any $F$-algebra $R$. In the following we will assume that $b$ is basic and in this case $J_{b}$ is an inner form of $G$.

\subsection{Coxeter type ADLV}We define $\Adm^{K}(\mu)$ to be the image of  $\Adm(\mu)$ in $\widetilde{W}_{K}\backslash\widetilde{W}/\widetilde{W}_{K}$ and $^{K}\widetilde{W}$ to be the set of elements of minimal length in $\widetilde{W}_{K}\backslash\widetilde{W}$.  We define the set $\mathrm{EO}^{K}(\mu)=\Adm^{K}(\mu)\cap ^{K}\widetilde{W}$. For $w\in W_{a}$, we set 
$$\mathrm{supp}_{\sigma}(w\tau)=\bigcup_{n\in \ZZ}(\tau\sigma)^{n}(\mathrm{supp}(w)).$$
If the length $l(w)$ of $w$ agrees with the cardinality of the set $\mathrm{supp}_{\sigma}(w\tau)/\langle\tau\sigma\rangle$, we say $w\tau$ is a \emph{$\sigma$-Coxeter element}. We denote by $\mathrm{EO}^{K}_{\sigma,\mathrm{cox}}(\mu)$ the subset of $\mathrm{EO}^{K}(\mu)$ such that $w$ is a $\sigma$-Coxeter element and $\mathrm{supp}_{\sigma}(w)$ is not equal to $\widetilde{\mathbb{S}}$. A \emph{$K$-stable piece} is a subset of $G(\breve{F})$ of the form $\breve{K}\cdot_{\sigma}IwI$ where $\cdot_{\sigma}$ means $\sigma$-conjugation, $I$ is an Iwahori subgroup and $w\in {^{K}\widetilde{W}}$. Then we define the Ekedahl-Oort stratum attached to $w\in \mathrm{EO}^{K}(\mu)$ of $X(\mu, b)_{K}$ by the set $$X_{K,w}(b)=\{g\in G(\breve{F})/\breve{K}: g^{-1}b\sigma(g)\in  \breve{K}\cdot_{\sigma}IwI\}.$$  Then by \cite[1.4]{GHN16} we have the following EO-stratification 
\begin{equation}X(\mu, b)_{K}=\bigcup_{w\in\mathrm{EO}^{K}(\mu)}X_{K,w}(b).\end{equation} 
The case when 
\begin{equation}\label{ADLV-EO}X(\mu, b)_{K}=\bigcup_{w\in\mathrm{EO}^{K}_{\sigma,\mathrm{cox}}(\mu)}X_{K,w}(b)\end{equation}
is particular interesting and when this happens we say the datum $(G, \mu, K)$ is of Coxeter type. The datum $(G, \mu, K)$ being Coxeter type or not depends only on the associated datum $(\widetilde{W}, \lambda, K, \sigma)$ where $\widetilde{W}$ is the Iwahori Weyl group of $G$, $\lambda$ is the image of $\mu$ in $X_{*}(T)_{\Gamma}$ and $\sigma$ is the induced automorphism on the affine Dynkin diagram by the Frobenius $\sigma$ on $\breve{G}$. The set of $(G, \mu, K)$ that is of Coxeter type is classified in \cite[Theorem 5.12]{GH-Cam15}. This includes the two cases we studied in the previous sections.
\begin{itemize}
\item[-] The quaternionic unitary case  corresponds to $G=\GU_{B_{p}}(2)$ and the datum $$(\widetilde{C}_{2}, \omega^{\vee}_{2}, \widetilde{\mathbb{S}}-\{1\}, \tau_{2})$$ where $\sigma$ acts on the affine Dynkin diagram by the image $\tau_{2}$ of $\omega^{\vee}_{2}$ in $\Omega$.

\item[-] The paramodular case corresponds to $G=\GSp(4)$ and the datum $$(\widetilde{C}_{2}, \omega^{\vee}_{2}, \widetilde{\mathbb{S}}-\{1\}, id)$$
where $\sigma$ acts on the affine Dynkin diagram trivially.

\end{itemize}

\subsection{Bruhat-Tits stratification of ADLV} Now we assume that $K$ is a maximal proper subset of $\tilde{\mathbb{S}}$ such that $\sigma(K)=K$. Consider the following set
$$\mathcal{J}=\{\Sigma\subset\tilde{\mathbb{S}}: \emptyset\neq \Sigma\text{ is }  \tau\sigma\text{-stable}\text{ and }d(v)=d(v^{\prime})\text{ for every } v,v^{\prime}\in \Sigma\}$$ where $d(v)$ is the distance between $v$ and the unique vertex not in $K$.
In fact every $w\in \mathrm{EO}^{K}_{\sigma,\mathrm{cox}}(\mu)$ corresponds to a $\Sigma\in \mathcal{J}$ and we write $w$ as $w_{\Sigma}$.  If $(G,\mu, K)$ is of Coxeter type, for any $w_{\Sigma}\in\mathrm{EO}^{K}_{\sigma,\mathrm{cox}}(\mu)$, \begin{equation}\label{EO-DL}X_{K,w_{\Sigma}}(b)=\bigcup_{i\in J_{b}/J_{b}\cap \breve{K}_{\tilde{\mathbb{S}}-\Sigma}}i X(w_{\Sigma}).\end{equation} Here $\breve{K}_{\tilde{\mathbb{S}}-\Sigma}$ is the parahoric subgroup associated to the set $\tilde{\mathbb{S}}-\Sigma$ and $X(w_{\Sigma})$ is a classical Deligne-Lusztig variety defined by $$X(w_{\Sigma})=\{g\in \breve{K}_{\mathrm{supp}_{\sigma}(w_{\Sigma})}/\breve{I}: g^{-1}\tau\sigma(g)\in \breve{I}w\breve{I}\}.$$ This is a Deligne-Lusztig variety attached to the maximal reductive quotient $G_{w}$ of the special fiber of $\breve{K}_{\tilde{\mathbb{S}}-\Sigma}$. Combine \eqref{ADLV-EO} and \eqref{EO-DL} we arrive at the following \emph{Bruhat-Tits stratification} of $X(\mu, b)_{K}$:
\begin{equation}
X(\mu, b)_{K}=\bigcup_{J_{b}/J_{b}\cap \ker(\kappa_{G})}\bigcup_{w_{\Sigma}\in  \mathrm{EO}^{K}_{\sigma,\mathrm{cox}}} \mathcal{X}^{\circ}_{\Sigma}
\end{equation}
where 
\begin{equation}\label{MSigma}
\mathcal{X}^{\circ}_{\Sigma}=\bigcup_{i\in J_{b}\cap\ker(\kappa_{G})/J_{b}\cap \breve{K}_{\tilde{\mathbb{S}}-\Sigma}}i X(w_{\Sigma}).
\end{equation}
Here the index set is related to the Bruhat-Tits building of $J_{b}$ in the following way. The group  $J_{b}\cap\ker(\kappa_{G})$ acts on the set of faces of type $\Sigma$ transitively and  $J_{b}\cap \breve{K}_{\tilde{\mathbb{S}}-\Sigma}$ is precisely the stabilizer of the face of type $\Sigma$ in the base alcove.

\subsubsection{Quaternionic unitary case} In the quaternionic unitary case, we can compute
\begin{center}
\begin{tabular}{lllll}
$\Sigma$                                     & \{1\}                    & \{0,2\}                     & \{0\}                               & \{2\} \\
$w_{\Sigma}$                               & $\tau $     & $s_{1}\tau$ & $s_{1}s_{2}\tau$ &        $s_{1}s_{0}\tau$ \\
$\tilde{\mathbb{S}}-\Sigma$             & \{0,2\}                  & \{1\}                       & \{1,2\}                             & \{0,1\}\\
$\mathrm{supp}_{\sigma}(w_{\Sigma})$ & $\emptyset $& \{1\}                       & \{1,2\}                             & \{0,1\}.                            
\end{tabular}
\end{center}

In this case the Deligne-Lusztig varieties $X(w_{\{0\}})$ and $X(w_{\{2\}})$ agrees with $X_{B}(w_{2})$ in Theorem \ref{DL-stratum}. The Deligne-Lusztig variety $X(w_{\{0,2\}})$ agrees with $X_{B}(w_{1})$ and $X(w_{\{1\}})$ is $0$-dimensional and agrees with $X_{P_{\{2\}}}(1)$. Therefore we have the following comparison between the Bruhat-Tits stratification for ADLV and Bruhat-Tits stratification for the Rapoport-Zink space studied in the quaternionic unitary case. Recall that for $\calM$ the Bruhat-Tits stratification in Theorem \ref{main-result} reads \begin{equation}\label{BT-stra-AFDL1}\calM=\calM^{\circ }_{\{0\}}\sqcup\calM^{\circ}_{\{2\}}\sqcup \calM^{\circ}_{\{0,2\}}\sqcup \calM_{\{1\}}.\end{equation}
\begin{itemize}
\item[-] $\mathcal{X}^{\circ}_{\{0\}}$ in \eqref{MSigma} is identified with $\calM^{\circ }_{\{0\}}$ in \eqref{BT-stra-AFDL1};
\item[-] $\mathcal{X}^{\circ}_{\{2\}}$ in \eqref{MSigma} is identified with $\calM^{\circ }_{\{2\}}$ in \eqref{BT-stra-AFDL1};
\item[-] $\mathcal{X}^{\circ}_{\{0, 2\}}$ in \eqref{MSigma}  is identified with $\calM^{\circ }_{\{0,2\}}$ in \eqref{BT-stra-AFDL1};
\item[-] $\mathcal{X}^{\circ}_{\{1\}}$ in \eqref{MSigma}  is identified with $\calM_{\{1\}}$ in \eqref{BT-stra-AFDL1} and is the set of superspecial points.
\end{itemize}

\subsubsection{Paramodular Siegel case} In the paramodular Siegel case, we can compute
\begin{center}
\begin{tabular}{lllll}
$\Sigma$                                     & \{1\}                    & \{0,2\}   \\
$w_{\Sigma}  $                               & $\tau $     & $s_{1}\tau$   \\
$\tilde{\mathbb{S}}-\Sigma$             & \{0,2\}                  & \{1\}   \\
$\mathrm{supp}_{\sigma}(w_{\Sigma})$                             & $\emptyset $           & \{1\}.                                                
\end{tabular}
\end{center}

In this case the Deligne-Lusztig varieties $X(w_{\{1\}})$ is $0$-dimensional and $X(w_{\{0,2\}})$ is isomorphic the complement of the $\FF_{p^{2}}$-points in $\PP^{1}$. Therefore we have the following comparison between the Bruhat-Tits stratification for ADLV and Bruhat-Tits stratification for the Rapoport-Zink space studied in the paramodular Siegel case. Recall that for $\calM$ the Bruhat-Tits stratification in Theorem \ref{main-result-param} reads \begin{equation}\label{BT-stra-AFDL2}\calM=\calM^{\circ }_{\{0,2\}}\sqcup \calM_{\{1\}}.\end{equation}

\begin{itemize}
\item[-] $\mathcal{X}^{\circ}_{\{0,2\}}$ in \eqref{MSigma} can be identified with $\calM^{\circ }_{\{0,2\}}$ in \eqref{BT-stra-AFDL2};
\item[-] $\mathcal{X}^{\circ}_{\{1\}}$ in \eqref{MSigma}  can be identified with $\calM_{\{1\}}$ in \eqref{BT-stra-AFDL2} and is the set of superspecial points.
\end{itemize}

\end{document}